\newcommand{\wt}{\widetilde}
\newcommand{\wh}{\widehat}
\newcommand{\End}{\mathrm{End}}
\newcommand{\Hom}{\mathrm{Hom}}
\newcommand{\Mod}{\mathrm{-Mod}}
\newcommand{\Proj}{\mathrm{Proj}}
\newcommand{\lp}{\left(}
\newcommand{\rp}{\right)}
\newcommand{\Ind}{\mathrm{Ind}}
\newcommand{\Coh}{\mathrm{Coh}}
\newcommand{\QCoh}{\mathrm{QCoh}}
\newcommand{\IndCoh}{\mathrm{IndCoh}}
\newcommand{\Perf}{\mathrm{Perf}}
\newcommand{\lbb}{[\![}
\newcommand{\rbb}{]\!]}
\newcommand{\HR}{\CM_{G, T^*V}}
\newcommand{\sHR}{[\CM_{G,T^*V}]}
\newcommand{\match}{\rhd\!\!\!\lhd }
\newcommand{\hypquotient}{/\!\!/\!\!/}
\newcommand{\C}{\mathbb C}
\newcommand{\Z}{\mathbb Z}
\newcommand{\fg}{\mathfrak{g}}
\newcommand{\fh}{\mathfrak{h}}
\newcommand{\tgv}{\mathfrak{d}_{\mathfrak g, V}}
\newcommand{\fa}{\mathfrak{a}}
\newcommand{\CA}{{\mathcal A}}
\newcommand{\CB}{{\mathcal B}}
\newcommand{\CC}{{\mathcal C}}
\newcommand{\CF}{{\mathcal F}}
\newcommand{\CG}{{\mathcal G}}
\newcommand{\CH}{{\mathcal H}}
\newcommand{\CL}{{\mathcal L}}
\newcommand{\CM}{{\mathcal M}}
\newcommand{\CN}{{\mathcal N}}
\newcommand{\CO}{{\mathcal O}}
\newcommand{\CT}{{\mathcal T}}
\newcommand{\CX}{{\mathcal X}}
\newcommand{\be}{\begin{equation}}
\newcommand{\ee}{\end{equation}}
\newcommand{\btik}{\begin{tikzcd}}
\newcommand{\etik}{\end{tikzcd}}
\begin{document}

\title{Quantum Groups and Symplectic Reductions}

\author{Wenjun Niu}

\newtheorem{Def}{Definition}[section]
\newtheorem{Thm}[Def]{Theorem}
\newtheorem{Prop}[Def]{Proposition}
\newtheorem{Cor}[Def]{Corollary}
\newtheorem{Lem}[Def]{Lemma}
\newtheorem{Rem}[Def]{Remark}

\numberwithin{equation}{section}

\maketitle

\abstract{Let $G$ be a reductive algebraic group with Lie algebra $\fg$ and $V$ a finite-dimensional representation of $G$. Costello-Gaiotto studied a graded Lie algebra $\tgv$ and the associated affine Kac-Moody algebra. In this paper, we show that this Lie algebra can be made into a sheaf of Lie algebras over $T^*[V/G]=[\mu^{-1}(0)/G]$, where $\mu: T^*V\to \fg^*$ is the moment map. We identify this sheaf of Lie algebras with the tangent Lie algebra of the stack $T^*[V/G]$. Moreover, we show that there is an equivalence of braided tensor categories between the bounded derived category of graded modules of $\tgv$ and graded perfect complexes of $[\mu^{-1}(0)/G]$.}

\tableofcontents

\newpage

\section{Introduction}

Let $G$ be a reductive algebraic group with Lie algebra $\fg$ and $V$ a finite-dimensional representation. In \cite{costello2019vertex}, the authors considered a graded Lie algebra, to be denoted by $\tgv$, whose affine Kac-Moody vertex algebra appears as the (perturbative) \textit{boundary vertex algebra} of the B twist of the 3d $\CN=4$  Langrangian gauge theory determined by $(G, T^*V)$. The Lie algebra $\tgv$ is simply given by $T^*[-2]\fh$ where $\fh=\fg\ltimes V[-1]$. 

Let $\mu: T^*V\to \fg^*$, and denote by $\CM:=\mu^{-1}(0)/\!/G$ the symplectic reduction (as an affine variety) and $[\CM]:=[\mu^{-1}(0)/G]$ the quotient stack (when $\mu: T^*V\to \fg^*$ is not flat, the variety $\mu^{-1}(0)$ is taken to be the derived zero locus). The variety $\CM$ and the stack $[\CM]$ are interesting objects in the area of symplectic geometry as $\CM$ is a Poisson cone and in good cases has conical symplectic singularities, whose resolution can be constructed from open substack of $[\CM]$. These not only give a large zoo of symplectic varieties, but they also provide ample examples of symplectic duality \cite{braden2016conical, braden2016conical2, webster20233}. From the perspective of the aforementioned field theory, the variety $\CM$ is known to be the \textit{Higgs branch}, and symplectic duality is a mathematical manifestation of 3d mirror symmetry of \cite{intriligator1996mirror}. In particular, the variety $\CM$ is expected (and verified in many examples) to be the Coulomb branch of the mirror dual theory. In the case that such a theory is again a gauge theory, such a Coulomb branch is defined in the influential works \cite{nakajima2016towards, braverman2018towards}.

The goal of this paper is two-fold. First, we bring the Lie algebra $\tgv$ into geometry by showing that it can be made into a  sheaf of Lie algebra over the stack $[\CM]$, which we denote by $l_{[\CM]}$, whose restriction to the cone point $0\in [\CM]$ recovers $\tgv$. More specifically, the object:
\be
l_{[\CM]}:=\C[\mu^{-1}(0)]\otimes \tgv, \qquad d=\sum x_i\otimes[\psi_+^i, -]+y_i\otimes [\psi_-^i, -]
\ee
with its canonical $G$-equivariant structure and Lie algebra structure from $\tgv$ becomes a Lie algebra object in $\Coh ([\CM])$. In fact, we show that this sheaf of Lie algebra is simply the tangent Lie algebra of $[\CM]$. The main statement is the following:

\begin{Prop}[Proposition \ref{Prop:TM}]

The tangent Lie algebra $T_{[\CM]}[-1]$ is identified with $l_{[\CM]}$ as a Lie algebra object in $\Coh ([\CM])$. Under this identification, the quadratic form on $\tgv$ is identified with the symplectic form on $T_{[\CM]}[-1]$. 

\end{Prop}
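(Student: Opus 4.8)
The strategy is to compute the tangent complex of $[\CM]=[\mu^{-1}(0)/G]$ directly from this presentation and recognise the answer. Recall that for a quotient stack $[X/G]$ with atlas $\pi\colon X\to[X/G]$ one has $\pi^*T_{[X/G]}\simeq\mathrm{cofib}\big(\fg\otimes\C[X]\xrightarrow{\,a\,}T_X\big)$, with $\fg\otimes\C[X]$ placed in cohomological degree $-1$ and $a$ the infinitesimal action map, together with its tautological $G$-equivariant structure. For $X=\mu^{-1}(0)$, the derived zero locus of $\mu\colon T^*V\to\fg^*$, one has $T_X\simeq\big[(V\oplus V^*)\otimes\C[\mu^{-1}(0)]\xrightarrow{\,d\mu\,}\fg^*\otimes\C[\mu^{-1}(0)]\big]$ in degrees $0,1$, since $T_{T^*V}$ is the trivial bundle with fibre $V\oplus V^*$. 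Splicing the two, $T_{[\CM]}$ is represented as a $G$-equivariant complex of $\C[\mu^{-1}(0)]$-modules by
\[
\fg\otimes\C[\mu^{-1}(0)]\;\xrightarrow{\,a\,}\;(V\oplus V^*)\otimes\C[\mu^{-1}(0)]\;\xrightarrow{\,d\mu\,}\;\fg^*\otimes\C[\mu^{-1}(0)]
\]
in degrees $-1,0,1$, so $T_{[\CM]}[-1]$ is this complex in degrees $0,1,2$. Since $\tgv=T^*[-2]\fh=\fg\oplus(V\oplus V^*)[-1]\oplus\fg^*[-2]$ is a graded $G$-module concentrated in degrees $0,1,2$, the underlying graded $G$-equivariant $\C[\mu^{-1}(0)]$-modules of $l_{[\CM]}=\C[\mu^{-1}(0)]\otimes\tgv$ and of $T_{[\CM]}[-1]$ agree summand by summand.

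Next I would match the differentials. First note that $d=\sum_i x_i\otimes[\psi_+^i,-]+y_i\otimes[\psi_-^i,-]$ is the inner derivation $\mathrm{ad}_\Theta$ for the $G$-invariant degree-one element $\Theta=\sum_i x_i\otimes\psi_+^i+y_i\otimes\psi_-^i$, and $[\Theta,\Theta]=2\mu$; hence $d$ is automatically a derivation of the bracket and $d^2=[\mu,-]$ vanishes once we are over $\mu^{-1}(0)$. In the flat case this already makes $(l_{[\CM]},d)$ a genuine dg Lie algebra; in the derived case one instead twists the Koszul dg Lie algebra $\big(\C[\mu^{-1}(0)]\otimes\tgv,\,d_{\mathrm{Kos}}\big)$ by the Maurer--Cartan element $\Theta-\sum_a e_a\otimes e^a$, where $e_a$ are the Koszul generators and $e^a$ the dual basis of $\fg^*\subset\tgv$; this is the main technical wrinkle. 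Granting this, one reads off that $d$ sends $\xi\in\fg$ to $\sum_i x_i[\psi_+^i,\xi]+y_i[\psi_-^i,\xi]$, which, since $[\psi_\pm^i,\xi]=-\xi\cdot\psi_\pm^i$ records the infinitesimal coaction on the coordinates, reassembles exactly into the Hamiltonian vector field of $\langle\mu,\xi\rangle$, i.e.\ into $a$; that on $(V\oplus V^*)$ the only surviving brackets are $[\psi_+^i,\psi_-^j]=\mathrm{coad}_{\psi_+^i}(\psi_-^j)\in\fg^*$, whose assembled pairing $V\otimes V^*\to\fg^*$ is the linearisation $d\mu$ of $\mu(v,p)=\langle p,(-)\cdot v\rangle$; and that $d$ vanishes on $\fg^*$ by degree. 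Hence $d$ is the tangent-complex differential and $l_{[\CM]}\simeq T_{[\CM]}[-1]$ as $G$-equivariant complexes on $[\CM]$.

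For the Lie structure I would use the identification $[\CM]=T^*[V/G]$ from the introduction. The tangent complex of a derived stack carries a canonical $L_\infty$-structure after the shift $[-1]$; for a cotangent stack $T^*\CY$, restricted to the zero section, it is the semidirect product $T_\CY[-1]\ltimes L_\CY[-1]$ with $L_\CY[-1]$ an abelian ideal on which $T_\CY[-1]$ acts by the Lie derivative. With $\CY=[V/G]$ this is $[\fg\to V]\ltimes[V^*\to\fg^*]$ (the first factor in degrees $0,1$, the second in degrees $1,2$), which under the evident identifications with $\fh$ and with $\fh^*[-2]$ becomes $\tgv=\fh\ltimes\fh^*[-2]$; extending $\C[\mu^{-1}(0)]$-linearly and $G$-equivariantly propagates the match over $[\CM]$, and strictness of $\tgv$ forces the higher operations to vanish. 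Finally, the tautological degree-two pairing of $\tgv=T^*[-2]\fh$ between $\fh$ and $\fh^*[-2]$ corresponds under these identifications to the pairing on $T_{[\CM]}[-1]$ induced by the canonical ($0$-shifted) symplectic form on the cotangent stack $T^*[V/G]$, which pairs $T_{[V/G]}$ against $L_{[V/G]}$; this is the last claim.

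The step I expect to be hardest is not any single computation above but making them simultaneously canonical: identifying the abstract tangent $L_\infty$-algebra of the derived stack $[\CM]$ --- with the Lie structure and the symplectic pairing coming from Hamiltonian reduction --- with the explicit dg Lie algebra $l_{[\CM]}$ carrying the honest bracket of $\tgv$, rather than merely exhibiting an abstract equivalence, and keeping this compatible with the $G$-equivariance throughout. The closely related delicacy is the derived (non-flat) case, where $l_{[\CM]}$ is a Maurer--Cartan twist of the Koszul dg Lie algebra $\C[\mu^{-1}(0)]\otimes\tgv$ and the Koszul tail must be carried through the matching of differential, bracket and pairing without disturbing any of them.
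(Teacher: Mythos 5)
Your proposal takes a genuinely different route from the paper. You compute the underlying complex directly from the cofiber sequence for quotient stacks and match the differential against $\mathrm{ad}_\Theta$; for the Lie structure you appeal to a semidirect-product description of the tangent Lie algebra of a cotangent stack $T^*\CY$ restricted to the zero section. The paper instead sets up two auxiliary lemmas and combines them: Lemma~\ref{Lem:Cmu=CE} identifies the Koszul model $\C[\mu^{-1}(0)]$ with $\mathrm{CE}^*(\tgv^+)$, and Lemma~\ref{Lem:affineTangent} says that for any $X=\mathrm{Spec}\,\mathrm{CE}^*(\mathfrak{l})$ the tangent Lie algebra is $\mathrm{CE}^*(\mathfrak{l})\otimes\mathfrak{l}$ with the adjoint action (proved by observing $\mathrm{CE}^*(\mathfrak{l})\otimes\mathrm{CE}^*(\mathfrak{l})\cong\mathrm{CE}^*(l_X)$, so the completed diagonal is represented on the nose). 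Combined with the fiber-product argument of Section~\ref{subsubsec:tangentfiber} showing $T_{[f^{-1}(0)/G]}[-1]\cong\fg\ltimes T_{f^{-1}(0)}[-1]$ (using the decomposition $\tgv=\fg\ltimes\tgv^+$ rather than your $\fh\ltimes\fh^*[-2]$), the proposition becomes a one-line consequence. This also sidesteps your ``main technical wrinkle'' entirely: viewing $\C[\mu^{-1}(0)]$ as $\mathrm{CE}^*(\tgv^+)$ makes the total differential a Chevalley--Eilenberg differential with coefficients in the adjoint module, which is automatically square-zero, so no Maurer--Cartan twist is needed.

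The genuine gaps are in your argument for the Lie bracket. The claim that $T_{T^*\CY}[-1]$, restricted to the zero section, is the semidirect product $T_\CY[-1]\ltimes L_\CY[-1]$ with $L_\CY[-1]$ an abelian ideal is asserted without proof or citation, and ``extending $\C[\mu^{-1}(0)]$-linearly and $G$-equivariantly propagates the match over $[\CM]$'' is a hand-wave: the sheaf-theoretic $L_\infty$ operations on $T_{[\CM]}[-1]$ are not obviously determined by their fiber at the cone point without a separate linearity argument. Most seriously, ``strictness of $\tgv$ forces the higher operations to vanish'' is circular --- one must establish that the intrinsic $L_\infty$ structure on $T_{[\CM]}[-1]$ has no $n$-ary operations for $n\ge 3$, which is not a consequence of your chosen model being strict. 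The paper proves this via Lemma~\ref{Lem:Tf}: the higher operations on $T_{f^{-1}(0)}[-1]$ are the total derivatives $\nabla^n f$, which vanish for $n\ge 3$ when $f=\mu$ is quadratic. You would need an analogous concrete argument (or the cotangent-stack structural claim proven in the derived setting, which is itself nontrivial) to close this step.
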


Second, we bring geometry to algebra by showing that the category of sheaves on $[\CM]$ is controlled by modules of the algebra $\tgv$. More specifically, we denote by $U(\tgv)\lbb\hbar\rbb\Mod_G^{gr}$ the category of graded modules of $U(\tgv)\lbb\hbar\rbb$ finitely generated and flat over $\C\lbb\hbar\rbb$, whose action of $\fg$ integrates to an algebraic action of $G$. Here the grading on $\hbar$ is $-2$. Applying the works of  \cite{drinfeld1986quantum, drinfeld1991quasi, etingof1996quantization}, this category can be given two equivalent braided tensor structures, by giving $U(\tgv)\lbb\hbar\rbb$ the structure of a quasi-triangular quasi-Hopf algebra as well as a quasi-triangular Hopf algebra. On the other hand, let $\Perf_{\C^\times}([\CM])$ be the category of perfect complexes on $[\CM]$. This category can be endowed with a braided tensor structure similar to \cite{roberts2010rozansky}, based on the study of Rozansky-Witten theory \cite{kapustin2009three, kapustin2010three}. The identification of tangent Lie algebras will be upgraded to the following statement, whose proof is a simple repeat of the Koszul duality of \cite{beilinson1996koszul}. 

\begin{Thm}[Theorem \ref{Thm:tangentBTC}]\label{Thm:tangentBTCintro}

There exists an equivalence of braided tensor categories:
\be
\CF: D^b U(\tgv)\lbb\hbar\rbb\Mod_G^{gr}\simeq \Perf_{\C^\times}([\CM]\lbb\hbar\rbb)
\ee
such that the Lie algebra object $l_{[\CM]}$ is precisely the image of $\tgv$ (as the adjoint module) under the functor $\CF$. 

\end{Thm}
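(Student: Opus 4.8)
The plan is to realize $\CF$ as the Koszul-duality functor attached to the graded Lie algebra $\tgv\lbb\hbar\rbb$, in the dg/Lie-algebra incarnation of the formalism of \cite{beilinson1996koszul} (Chevalley--Eilenberg cochains versus universal enveloping algebra), and then to check that the two braided tensor structures correspond. The first input is a purely algebraic reformulation of the geometry: the commutative dg algebra of Chevalley--Eilenberg cochains $C^*(\tgv\lbb\hbar\rbb)=\big(\Sym(\tgv^*[-1]),d_{\mathrm{CE}}\big)\lbb\hbar\rbb$ is canonically isomorphic, $\C^\times$-equivariantly, to $\CO([\CM]\lbb\hbar\rbb)$. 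Indeed, writing $\tgv=T^*[-2]\fh$ with $\fh=\fg\ltimes V[-1]$, the generators dual to the two summands $V$ and $V^*$ of $\tgv$ produce the ring $\C[T^*V]$; the generators dual to the $\fg^*[-2]$-summand, together with the piece of $d_{\mathrm{CE}}$ coming from the bracket $V\otimes V^*\to\fg^*$, form the Koszul complex of the moment map $\mu$, so these directions present $\C[\mu^{-1}(0)]$ as a derived ring; and the $\fg$-directions, with the remaining piece of $d_{\mathrm{CE}}$, impose homotopy $G$-invariance. This is Proposition~\ref{Prop:TM} read on functions rather than on the tangent complex; in the same way, the cochains with coefficients in the adjoint module, $C^*(\tgv\lbb\hbar\rbb,\tgv\lbb\hbar\rbb)$, are exactly the complex $l_{[\CM]}$, which by Proposition~\ref{Prop:TM} is $T_{[\CM]}[-1]$.

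Next I would run the Koszul duality. Because $\tgv$ is finite-dimensional, concentrated in cohomological degrees $0,1,2$ (only the central parameter $\hbar$ lying in degree $-2$), and carries the auxiliary weight grading, the Chevalley--Eilenberg/enveloping-algebra duality applies to the pair $\big(U(\tgv)\lbb\hbar\rbb,\,C^*(\tgv\lbb\hbar\rbb)\big)$ and yields an equivalence $\CF\colon D^bU(\tgv)\lbb\hbar\rbb\Mod^{gr}\simeq D^b\,C^*(\tgv\lbb\hbar\rbb)\Mod^{gr}$, given on objects by $M\mapsto C^*(\tgv\lbb\hbar\rbb,M)$. The subalgebra $\fg\subset\tgv$ has complementary ideal $\mathfrak{r}=V[-1]\oplus V^*[-1]\oplus\fg^*[-2]$, so $G$-equivariant $U(\tgv)\lbb\hbar\rbb$-modules are the same as $G$-equivariant $U(\mathfrak{r})\lbb\hbar\rbb$-modules, and Koszul duality for $\mathfrak{r}$ lands in $G$-equivariant $\C[\mu^{-1}(0)]\lbb\hbar\rbb$-modules, i.e.\ in sheaves on $[\CM]\lbb\hbar\rbb$; combined with the previous paragraph this gives $D^b\,C^*(\tgv\lbb\hbar\rbb)\Mod^{gr}_G\simeq\Perf_{\C^\times}([\CM]\lbb\hbar\rbb)$. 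Here one must check that the finiteness imposed on the module side --- finitely generated and flat over $\C\lbb\hbar\rbb$, that is, $\hbar$-families of finite-dimensional $\tgv$-representations --- is exactly what Koszul duality matches with \emph{perfectness} (rather than mere coherence) of the corresponding complex on $[\CM]\lbb\hbar\rbb$. In particular $\tgv\lbb\hbar\rbb$ itself, being finite free over $\C\lbb\hbar\rbb$, is sent to $C^*(\tgv\lbb\hbar\rbb,\tgv\lbb\hbar\rbb)=l_{[\CM]}$, which is perfect because $T_{[\CM]}$ is.

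It then remains to promote $\CF$ to an equivalence of braided tensor categories. Monoidality of the underlying bifunctor is formal: cocommutativity of $U(\tgv)\lbb\hbar\rbb$ (the primitivity of $\tgv$) makes the Chevalley--Eilenberg functor strongly monoidal, $C^*(\tgv,M\otimes_\C N)\simeq C^*(\tgv,M)\otimes_{C^*(\tgv)}C^*(\tgv,N)$, so $\otimes_\C$ on modules is carried to $\otimes_{\CO}$ on $[\CM]\lbb\hbar\rbb$, with associators matched. The content is in the braidings. On the algebra side the braiding is the one produced by \cite{drinfeld1991quasi,etingof1996quantization} out of the invariant symmetric pairing on $\tgv$ --- the quadratic form of Proposition~\ref{Prop:TM} --- via the Knizhnik--Zamolodchikov/Drinfeld-associator construction (equivalently, via the $R$-matrix of the quantized Hopf algebra). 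On the geometric side it is the Rozansky--Witten braiding of \cite{roberts2010rozansky,kapustin2009three,kapustin2010three}, built from the Atiyah class of $T_{[\CM]}[-1]$ together with its symplectic form. The key point is that both are the \emph{same} universal braided structure attached to a finite-dimensional (dg) Lie algebra with compatible symplectic pairing: by \cite{roberts2010rozansky} the Rozansky--Witten braiding on perfect complexes of a target with shifted-symplectic tangent complex is governed precisely by the $L_\infty$-structure on $T[-1]$ and its pairing, and under Koszul duality these are $(\tgv,\langle-,-\rangle)$; matching the associators and $R$-matrices on the two sides identifies the braidings. Finally one verifies that $\CF$ respects the $\C^\times$-equivariant structures and commutes with reduction modulo $\hbar$, and --- now immediate from the first paragraph and monoidality --- that it sends $\tgv$, as a braided Lie algebra object, to $l_{[\CM]}$.

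The bookkeeping in the second paragraph (especially pinning ``finitely generated and flat over $\C\lbb\hbar\rbb$'' to ``perfect'' rather than ``coherent'') needs some care, but I expect the real obstacle to be the braiding comparison in the third: showing that the Rozansky--Witten tensor structure on $\Perf_{\C^\times}([\CM]\lbb\hbar\rbb)$ is carried by Koszul duality to the Drinfeld--Etingof--Kazhdan structure on $U(\tgv)\lbb\hbar\rbb$-modules --- equivalently, that the perturbative braiding built from the Atiyah class coincides with the exponentiated Casimir of $(\tgv,\langle-,-\rangle)$ together with its associator. This is the point where one cannot merely transcribe \cite{beilinson1996koszul}, and where a genuine comparison of the two a priori different constructions of the braiding is required.
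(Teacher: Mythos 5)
Your strategy is essentially the paper's: run Koszul duality in the Beilinson--Ginzburg--Soergel form for the positively graded ideal (your $\mathfrak{r}$, the paper's $\tgv^+$) and absorb $\fg$ into $G$-equivariance; identify the Chevalley--Eilenberg cochains of $\tgv^+$ with $\C[\mu^{-1}(0)]$; and then match the braidings by appealing to the common KZ/Drinfeld-associator recipe built from the Casimir/Poisson bivector of Proposition~\ref{Prop:TM}.

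There is, however, a genuine gap in the third paragraph, which you do flag but do not close, and which is the actual technical content of the paper's proof. Identifying $\CF(\tgv\lbb\hbar\rbb)$ with $T_{[\CM]}[-1]$ \emph{as a Lie algebra object with pairing} is necessary but not sufficient to match the two braidings: the Rozansky--Witten braiding of \cite{roberts2010rozansky,kapustin2010three} is built not from the Lie algebra alone but from the canonical Atiyah-class \emph{action} of $T_{[\CM]}[-1]$ on each object, whereas on the algebra side the braiding uses the adjoint $\tgv$-action coming from the Hopf structure. One must therefore prove that $\CF$ intertwines these two a priori unrelated module structures, i.e.\ that for every $M$ the morphism $\CF(a_M)$ equals $\alpha_{\CF(M)}$. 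This is Proposition~\ref{Prop:CFaction} in the paper, and the argument there is not a formal transcription of Koszul duality: it relies on Lemma~\ref{Lem:SESmodule} to decompose the $T_{[\CM]}[-1]$-action into its restriction along $T_{\mu^{-1}(0)}[-1]$ plus the $\wh{G}$-action on invariants, and then on unwinding Hennion's and Gaitsgory--Rozenblyum's definition of the tangent-Lie-algebra module structure for the first piece, and the derivative of the $G$-equivariant structure for the second. Writing \textquotedblleft matching the associators and $R$-matrices on the two sides identifies the braidings\textquotedblright\ presupposes exactly what needs to be shown; you should also note that, once the actions are matched, one still needs Drinfeld's uniqueness theorem \cite[Theorem A$'$]{drinfeld1991quasi} to see that the possible ambiguity in the associator does not obstruct the equivalence. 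The remaining bookkeeping you flag (pinning \textquotedblleft finite-flat over $\C\lbb\hbar\rbb$\textquotedblright\ to \textquotedblleft perfect\textquotedblright) is handled in the paper by a filtration argument in Proposition~\ref{Prop:equivbounded} and is genuinely only bookkeeping; the Atiyah-class comparison is the place where new work is required.
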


\begin{Rem}

This Koszul duality statement holds very generally. For example, in \cite{baranovsky2005bgg, baranovsky2007bgg} it was proven for $L_\infty$ algebras arising as the tangent Lie algebra of complete intersections of projective and toric varieties. 

\end{Rem}

\subsection{Braided tensor categories and TQFT}

\subsubsection{B twist of 3d $\CN=4$ theories and quantum groups}

The B twist of 3d $\CN=4$ gauge theory is a (physical) \textit{topological quantum field theory} in 3 dimensions. As such, it gives rise to a braided tensor category $\CC$, the category of line operators. Understanding of this category is very important to the study of the TQFT, since one can hope to construct knot invariants and 3-manifold invariants using the method of \cite{reshetikhin1991invariants}. Moreover, the algebra $\C[\CM]$ (the algebra of local operators) can be computed from $\CC$ as the endomorphism of the identity object:
\be\label{eq:endvac}
\End_{\CC} (\mathbbm{1})\cong \C[\CM]. 
\ee
We must note here that one important distinction between topological twists of 3d $\CN=4$ theories and the Chern-Simons theories considered in \cite{reshetikhin1991invariants}, is that the category $\CC$ is generally not a semi-simple category, but rather the derived category of a non-semisimple tensor category. The endomorphism in equation \eqref{eq:endvac} is taken to be the derived endomorphism. Non-semisimplicity presents immense difficulty in the construction of 3-manifold invariants.

For a restricted class of non-semi-simple braided tensor categories, the work \cite{costantino2014quantum} extended the construction of  \cite{reshetikhin1991invariants} to obtain 3-manifold invariants. Their primary examples are module categories of quantum groups at roots of unity. See also \cite{de20223} for further information and references. Although the braided tensor categories for 3d $\CN=4$ gauge theories usually don't fall into the class considered in \cite{costantino2014quantum}, it is hopeful that their work could be extended to the 3d $\CN=4$ gauge theory settings.  Nevertheless, one needs to understand the category $\CC$ as much as possible, including all the objects, the braiding between them, their rigid dual and so on. Otherwise obtaining 3-manifold invariants would be hopeless. 

So far, there has been a large body of work going into the study of the category of line operators of the topological twists of 3d $\CN=4$ gauge theories, including but not limited to \cite{bullimore2016boundaries, braverman2019coulomb3, webster2019coherent, oblomkov2024categorical, dimofte2020mirror, costello2019higgs, creutzig2021qft, hilburn2022tate, ballin20223d, BCDN, gammage2022betti, Garner:2023vb, geer2022three, garner2024b}. However, the understanding of the braided-tensor structure of this category is still lacking. In recent work \cite{creutzig2024kazhdan} and work to appear \cite{cdntoappear}, this has been significantly impoved, by arguing that the category admits a quantum group description. Namely, it is argued that the category $\CC$ is given by the derived category of modules of an explicit (topological) Hopf algebra $\CH$ with an $R$ matrix in some algebraic completion of $\CH \otimes \CH$. We show in  \cite{cdntoappear} that the quantum group considered in this paper $U(\tgv)\lbb\hbar\rbb$ shows up as a Hopf sub-algebra of $\CH$. 

Viewed from this vein, Theorem \ref{Thm:tangentBTCintro} further confirms the results of \cite{creutzig2024kazhdan, cdntoappear}, by directly connecting modules of $U(\tgv)\lbb\hbar\rbb$ with sheaves on the Higgs branch $\CM$ (or better, the stack $[\CM]$). Moreover, it provides a clear geometric meaning to the Lie algebra $\tgv$ and $\CH$, in terms of the tangent Lie algebra of $[\CM]$ and its quantization. 

\subsubsection{Rozansky-Witten TQFT on cotangent stacks}

The B twist of 3d $\CN=4$ gauge theories are closely related to another class of TQFT named Rozansky-Witten theory \cite{rozansky1997hyper}. In fact, the study of line operators in 3d $\CN=4$ gauge theories took much inspiration from the works devoted to the study of Rozansky-Witten TQFT \cite{kontsevich1999rozansky, kapranov1999rozansky, kapustin2009three, Kapustin:2009cd, kapustin2010three, roberts2010rozansky}. In particular, given a smooth symplectic variety $X$, the category of line operators for the Rozansky-Witten theory with target $X$ is proposed to be $\Coh (X)$, the category of coherent sheaves on $X$, such that the braided tensor structure is induced from the tangent Lie algebra $T_X[-1]$ and the Poisson form (see \ref{subsec:symquotangent} for more details). 

The B twist of 3d $\CN=4$ gauge theory defined by $(G, T^*V)$ can be viewed as a Rozansky-Witten TQFT valued in the co-tangent stack $T^*(V/G)=[\CM]$. Thanks to developments in derived geometry  (\cite{pantev2013shifted, safronov2016quasi, hennion2018tangent, gaitsgory2019study, gaitsgory2017study} to name a few), the theory of tangent Lie algebras and symplectic structures are extended to stacks like $[\CM]$. In particular, the category $\Coh ([\CM])$ and $\Perf([\CM])$ has a braided tensor category structure similar to the smooth case. 

Viewed from this vein, Theorem \ref{Thm:tangentBTCintro} confirms the relation between Rozansky-Witten TQFT and 3d $\CN=4$ gauge theories. Moreover, it simplifies the study of Rozansky-Witten TQFT in the following two ways. Firstly, recognizing the relation between the tangent Lie algebra of $[\CM]$ and $\tgv$ significantly improves our undersanding of the braided tensor structure of $\Coh([\CM])$ (for example, one can write down explicit maps between chain complexes that computes braiding). Secondly, it shows that the category $\Coh([\CM])$ admits a fiber functor, which is exact for the Koszul t-structure (by transporting the fiber functor of \cite{etingof1996quantization}). This fiber functor is roughly speaking given by the following:
\be
\Hom \lp \CO_{[V/G]}\otimes \CO_{T^*_0}, -\rp
\ee
where $\CO_{[V/G]}$ is the structure sheaf of the base $V/G$ of $T^*([V/G])$ and $\CO_{T^*_0}$ is the structure sheaf of the fiber at $0$. These two sheaves (in fact, $\mathbb{E}_2$ algebra objects in the braided tensor category) corresponds to the transverse boundary conditions considered in \cite{cdntoappear}. Note that in the derived category $\CO_{[V/G]}\otimes \CO_{T^*_0}\cong \CO_{[e]}$, where $\CO_{[e]}$ is the sky-scraper sheaf at the cone point. A nice realization from the construction of the equivalence is that one can show that this is a fiber functor (namely that it is monoidal) only if one keeps the explicit chain model of $\CO_{[e]}\cong \CO_{[V/G]}\otimes \CO_{T^*_0}$ transported from $\CF$.

\subsection{Boundary VOA and Kazhdan-Lusztig equivalences}

As mentioned earlier, the affine Kac-Moody algebra $V(\tgv)$ of $\tgv$ shows up as perturbative boundary vertex algebras of B twist of 3d $\CN=4$ gauge theories \cite{costello2019vertex}. Here the bi-linear form is taken to be $\kappa+k\kappa_\fg$, where $\kappa$ is the pairing between $\fh$ and its dual and $\kappa_\fg$ is the non-degenerate pairing on $\fg$. The value of $k$ is determined in \cite[Section 6.1]{costello2020boundary}. The work \cite{ballin20223d, BCDN, creutzig2024kazhdan} used certain non-perturbative completion of VOA for the case when $G=(\C^\times)^r$ is abelian. However, when studying the category of modules, it is still most useful to start with the Kazhdan-Lusztig category of $V(\tgv)$. Unfortunately, for non-abelian $G$, this category has not been studied very much. 

In the case when $G$ is abelian, it was also shown in \cite{creutzig2024kazhdan} that there is a Kazhdan-Lusztig type equivalence, namely an equivalence between modules of $V(\tgv)$ and modules of a quasi-triangular Hopf algebra $U_q (\tgv)$ built from $\tgv$. We won't comment it here, but the quantum group $U_q (\tgv)$ is essentially a finite-evaluation of the quasi-triangular Hopf algebra $U(\tgv)\lbb\hbar\rbb$ (evaluating $e^\hbar$ at $q$).

It is not entirely suprising that such a Kazhdan-Lusztig type correspondence should exist, since the B twist of 3d $\CN=4$ gauge theories can be viewed as a supergroup Chern-Simons theory \cite{Kapustin:2009cd}, whose Lie superalgebra is precisely $\tgv$. 

From Theorem \ref{Thm:tangentBTCintro}, we therefore expect a Kazhdan-Lusztig equivalence for $V(\tgv)$ beyond abelian case. More specifically, we expect that for any reductive algebraic group $G$, certain full subcategory of $V(\tgv)$ modules is equivalent to modules of a finite-evaluation of $U(\tgv)\lbb\hbar\rbb$ (which we conjecture exists). We also expect that a careful non-perturbative completion of $V(\tgv)$ similar to the manner of \cite{BCDN} gives rise to a braided tensor category equivalent to the category of modules of the quantum group $\CH$ from \cite{cdntoappear}. We expect that this equivalence can be realized by considering the same fiber functor of \cite{etingof1996quantization}, replacing the Verma modules by free-field realizations of $V(\tgv)$, as in \cite{creutzig2024kazhdan} for the case of abelian gauge group $G$. 

\subsection{Organization and sketch of proof strategies}

In Section \ref{sec:Takiffquant}, we introduce the graded Lie algebra $\tgv$ and its two quantizations according to \cite{drinfeld1986quantum, drinfeld1991quasi}. Namely, we will endow the algebra $U(\tgv)\lbb\hbar\rbb$ with the structure of a quasi-triangular quasi-Hopf algebra and a quasi-triangular Hopf algebra. We then recall the work of \cite{etingof1996quantization}, from which we derive the equivalence of these two quantizations, namely the equivalence of braided tensor categories defined by these two structures, on the category of modules finitely-generated and flat over $\C\lbb\hbar\rbb$.

In Section \ref{sec: RWtangent}, we study the symplectic reduction $T^*([V/G])$. We recall the theory of Tangent Lie algebras for stacks according to \cite{hennion2018tangent, gaitsgory2019study, gaitsgory2017study}, extending the idea of \cite{kapranov1999rozansky}. This, together with the results of derived symplectic geometry \cite{pantev2013shifted, safronov2016quasi}, allows us to define a braided tensor category structure on the category of coherent sheaves and perfect complexes on $T^*([V/G])$, extending the smooth case of \cite{roberts2010rozansky}. 

In Section \ref{sec:SRandTakiff}, we first use the definition of \cite{gaitsgory2019study, gaitsgory2017study} to explicitly calculate the tangent Lie algebra of $T^*([V/G])$. We in particular show that the Lie algebra $\tgv$ can be naturally made into a sheaf of Lie algebras over $T^*[V/G]$, and is naturally identified with the tangent Lie algebra. We then proceed to prove that one can upgrade this to an equivalence $\CF$ of braided tensor categories. The proof of this equivalence at the derived category level follows word-to-word from the proof of the Koszul duality statement of \cite{beilinson1996koszul}, by introducing a larger weighted category so that the inverse of $\CF$ is defined on the chain level. We then identify the action of a module of $\tgv$ under $\CF$ with the action given by Atiyah class, from which the equivalence of braided tensor categories follow immediately.

\begin{Rem}

In this paper, we only concern ourselves with the classical triangulated categories, which in most cases are derived categories of some abelian categories. The theory of tangent Lie algebra for stacks is built on the powerful machinery of derived geometry and DG categories, from which we can pull-back results to these classical categories. We expect that most of the statements in this paper can be simply proven there as well, but we prefer to stick to the more down-to-earth setting, leaving the task of such extensions to more capable hands.  

\end{Rem}

\subsection{Acknowledgements}

I would like to thank  Thomas Creutzig and Tudor Dimofte for collaborations on related topics that led to this work. I would like to thank Kevin Costello, Davide Gaiotto, Niklas Garner, Eugene Gorsky, Benjamin Hennion, Justin Hilburn, Lev Rozansky, Nick Rozenblyum, Ben Webster and Matthew Young for very helpful discussions related to this project. I would like to thank my best friend Don Manuel for his encouragements. My research is supported by Perimeter Institute for Theoretical Physics. Research at Perimeter Institute is supported in part by the Government of Canada through the Department of Innovation, Science and Economic Development Canada and by the Province of Ontario through the Ministry of Colleges and Universities.

\section{Lie algebra $\tgv$ and its quantizations}\label{sec:Takiffquant}

\subsection{Definition of $\tgv$ as a double}\label{subsec:Takiff}

In the following, we will always denote by $G$ a reductive algebraic group with $\mathrm{Lie}(G)=\fg$, and $V$ a finite dimensional representation of $G$. From this, one obtain a graded Lie algebra $\fh:=\fg\ltimes V[-1]$. From now on, we always denote by $[-,-]$ the graded commutator, so that if $x, y$ are two homogenous elements in a graded algebra, then:
\be
[x, y]=xy-(-1)^{|x||y|}yx.
\ee
 If we fix a set of basis $\{x_a\}$ of $\fg$, and $\{\psi_+^i\}\in V$, then the nontrivial commutation relations of $\fh$ are given by:
\be
[x_a, x_b]=\sum_c f_{ab}^c x_c,\qquad [x_a, \psi_+^i]=\rho(x_a)^i{}_j \psi_+^j=x_a\cdot \psi_+^i,
\ee
where $f_{ab}^c$ is the structure constants of $\fg$, and $\rho$ is the matrix elements of the action of $\fg$ on $V$. One can always view $\fh$ as a Lie bi-algebra with trivial Lie co-bracket. This gives rise to the double $D(\fh)$, which as a vector space is $\fh\oplus \fh^*$. 

\begin{Def}\label{def:grTakiff}
We define the Lie algebra $\tgv$ to be the double $D(\fh)$. This can be made into a graded Lie algebra, where the degree of $\fh^*$ is shifted by $2$:
\be
\tgv=\fh\ltimes \fh^*[-2]
\ee 

\end{Def}  

\begin{Rem}
As discussed in the introduction, this Lie algebra is also equal to $T^*[-2]\fh$. 

\end{Rem}

The Lie algebra $\tgv$ is a solvable Lie algebra which, as a graded vector space, can be represented by:
\be
\tgv=\fg\oplus (V\oplus V^*)[-1]\oplus \fg^*[-2],
\ee
If we fix a dual basis $\{t^a\}$ of $\fg^*[-2]$ and $\{\psi_{-,i}\}$ of $V^*$, then the additional nontrivial commutation relations of $\tgv$ are given by:
\be
[x_a, t^b]=\sum -f_{ac}^b t^c,\qquad [x_a, \psi_{-,i}]=-\rho(x_a)^j{}_i \psi_{-,j},\qquad [\psi_+^i, \psi_{-,j}]=\sum_a \rho^i{}_j(x_a)t^a.
\ee
In particular $[\fh^*, \fh^*]=0$. If we denote by $\tgv^+$ the Lie subalgebra generated by elements in positive degrees, then it is easy to see that $\tgv=\fg\ltimes \tgv^+$. 

As a double, $\tgv$ came equipped with a non-degenerate invariant bilinear form $\kappa$ (of degree $2$), which is defined via the dual pairing between $\fh$ and $\fh^*$. This gives rise to the following classical $r$ matrix:
\be\label{eq:classicalr}
r=\sum_a x_a\otimes t^a +\sum_i \psi_+^i\otimes \psi_{-,i}\in \fh\otimes \fh^*,
\ee
This element $r$ satisfies:
\be\label{eq:classicalC}
\Omega:=\frac{1}{2}(r+r^{21})=\frac{1}{2}\lp \Delta (C)-C\otimes 1-1\otimes C\rp,
\ee
where $C$ is the quadratic Casimir associated to $\kappa$, namely:
\be
C=\frac{1}{2}\sum_a \lp x_a t^a+t^a x_a\rp +\frac{1}{2}\sum_i (\psi_+^i \psi_{-,i}- \psi_{-,i} \psi_+^i)\in U(\tgv),
\ee
and $\Delta$ is the standard co-product on $U(\tgv)$. This $r$ satisfies the classical \textit{Yang-Baxter} equation:
\be
[r^{12}, r^{13}]+[r^{12}, r^{23}]+[r^{13}, r^{23}]=0\in U(\tgv)^{\otimes 3}.
\ee
Here $r^{ij}$ means putting $r$ in the $i,j$-th components of the tensor product. Consequently, $r$ gives rise to a Lie bi-algebra structure on $\tgv$ such that:
\be
\delta_r (x)=[x\otimes 1+1\otimes x, r].
\ee
Explicitly, we have:
\be
\delta_r(x_a)=\delta_r(\psi_+^i)=0,\qquad \delta_r(t^a)=-\sum f_{bc}^a t^b\otimes t^c,\qquad \delta_r(\psi_-^i)=\sum \rho(x_a)^{i}{}_j (\psi_-^j\otimes t^a+t^a\otimes\psi_-^j). 
\ee
If we simply choose a set of graded basis $\{x_i\}\subseteq \fh$ and corresponding set of graded dual basis $\{t^i\}\subseteq \fh^*$, then the co-bracket is simply
\be
\delta_r(t^i)=\sum f_{kj}^i t^j\otimes t^k=-\sum (-1)^{|x_j||x_k|}f^i_{jk}t^j\otimes t^k.
\ee
As in Definition \ref{def:grTakiff}, we give $\fh^*[-2]$ a shifted degree, and put the Lie algebra $\tgv$ over $\C\lbb\hbar\rbb$ where $\hbar$ has degree $-2$. Under this grading, the element $\delta_r$ is not degree preserving, but the element $\hbar\delta_r$ is. We are therefore interested in the quantization $(U(\tgv\lbb\hbar\rbb), \hbar\delta_r)$. In the following, we recall two quantizations of $(U(\tgv\lbb\hbar\rbb), \hbar\delta_r)$ via the work of Drinfeld. The equivalence of these two is a consequence of the canonical quantization scheme of \cite{etingof1996quantization}. 

\subsection{Drinfeld's quantization of Lie bi-algebras}

In this section, we introduce graded quantization of $\tgv^\hbar$ over the formal ring $\C\lbb\hbar\rbb$, according to \cite{drinfeld1986quantum, drinfeld1991quasi}. We introduce two quantizations, one as a quasi-triangular quasi-Hopf algebra, and one as quasi-triangular Hopf algebra. We then recall the results of Etingof-Kazhdan \cite{etingof1996quantization} on canonical quantization of Lie bi-algebras, from which it follows simply that these two quantizations give equivalent braided tensor categories over $\C\lbb\hbar\rbb$.  

\begin{Rem}

Note that  although \cite{drinfeld1986quantum, drinfeld1991quasi} as well as \cite{etingof1996quantization} only deal with ordinary finite-dimensional Lie bi-algebras and their quantizations, the constructions in these works extend without any effort to graded Lie algebras and Lie super-algebras, as long as one keeps the sign-rule for the graded commutator throughout. 

\end{Rem}

\subsubsection{Quantization as a quasi-Hopf algebra}\label{subsec:quant1}

We first recall the definition of quasi-triangular quasi-Hopf algebra over $\C\lbb\hbar\rbb$ by Drinfeld \cite{drinfeld1989quasi}.

\begin{Def}
A quasi-triangular quasi-Hopf algebra over $\C\lbb\hbar\rbb$ is a tuple $(A, \Delta^\hbar, \epsilon^\hbar, \Phi^\hbar, R^\hbar)$, where:
\begin{enumerate}
\item $A$ is a topological algebra over $\C\lbb\hbar\rbb$ such that $A/\hbar A$ is a standard universal enveloping algebra of some finite-dimensional Lie algebra $\fg$ and $A\cong V\lbb\hbar\rbb$ as a module of $\C\lbb\hbar \rbb$ where $V$ is a vector space. 

\item $\Delta^\hbar: A\to A\otimes_{\C\lbb\hbar \rbb} A$ is a continuous algebra homomorphism, $\epsilon^\hbar: A\to \C\lbb\hbar \rbb$ a continuous homomorphism such that $\epsilon^\hbar(1)=1, \Delta^\hbar(1)=1$. 

\item Invertible elements $R^\hbar\in   A\otimes_{\C\lbb\hbar \rbb} A$ and $\Phi^\hbar\in  A\otimes_{\C\lbb\hbar \rbb} A \otimes_{\C\lbb\hbar \rbb} A$. 

\item In the quotient $A/\hbar A$, the homomorphisms $\epsilon^\hbar, \Delta^\hbar$ becomes the standard symmetric Hopf algebra structure of $U(\fg)$, and $R=1 \mathrm{~mod~} \hbar$ and $\Phi=1 \mathrm{~mod~} \hbar$.

\end{enumerate}

These satisfy (omitting $\hbar$ from all equations to simplify notations):
\be
\begin{aligned}
(1\otimes \Delta) \Delta&=\Phi \cdot (\Delta\otimes 1)(\Delta)\cdot \Phi^{-1}\\
(1\otimes 1\otimes \Delta)(\Phi)\cdot (\Delta\otimes 1\otimes 1)(\Phi)&=(1\otimes \Phi)\cdot (1\otimes \Delta\otimes 1)(\Phi\cdot (\Phi\otimes 1)\\
(\epsilon\otimes 1)\circ \Delta &=1=(1\otimes \epsilon)\circ \Delta\\
(1\otimes \epsilon\otimes 1)(\Phi)& = 1\\
(\Delta)^{op}&=R\cdot \Delta\cdot R^{-1}\\
(\Delta\otimes 1)(R)&=\Phi^{312} R^{13}(\Phi^{132})^{-1}R^{23}\Phi,\\
(1\otimes \Delta)(R)&=(\Phi^{231})^{-1}R^{13}\Phi^{213}R^{12}\Phi^{-1}
\end{aligned}
\ee

\end{Def}

The above element $\Phi^\hbar$ is called a \textit{Drinfeld associator}. Given such a quasi-triangular quasi-Hopf algebra, one can consider the category of finite-flat modules of $A$, which we denote by $A\Mod$. By definition, an object in $A\Mod$ is a finitely-generated flat $\C\lbb\hbar\rbb$ module $M$, together with a continuous morphism:
\be
A\otimes_{\C\lbb\hbar\rbb} M\to M
\ee
making $M$ into a module of $A$. We have the following: 

\begin{Cor}
The category $A\Mod$ is a braided tensor category, where co-product is given by $\Delta^\hbar$, braiding is given by $R^\hbar$ and associativity given by $\Phi^\hbar$. 
\end{Cor}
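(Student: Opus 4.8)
The plan is to check that the data $(\Delta^\hbar,\epsilon^\hbar,\Phi^\hbar,R^\hbar)$ furnish $A\Mod$ with the structure morphisms of a braided tensor category, each coherence axiom of the latter being a direct transcription of one of the defining identities of a quasi-triangular quasi-Hopf algebra evaluated on a tensor power of modules. First I would set up the underlying monoidal structure: for $M,N\in A\Mod$ the tensor product is $M\otimes_{\C\lbb\hbar\rbb}N$ with $A$-action $a\cdot(m\otimes n)=\Delta^\hbar(a)(m\otimes n)$, which is well defined since $\Delta^\hbar$ is a continuous algebra map into $A\otimes_{\C\lbb\hbar\rbb}A$; the unit object is $\C\lbb\hbar\rbb$ with $A$ acting through $\epsilon^\hbar$. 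Because $\C\lbb\hbar\rbb$ is a complete Noetherian local ring, a finitely generated flat module over it is free of finite rank, so $M\otimes_{\C\lbb\hbar\rbb}N$ is again finitely generated and flat, the functor $\otimes_{\C\lbb\hbar\rbb}$ is exact and strictly associative, and the $\hbar$-adic completion introduces nothing new; this confirms $A\Mod$ is closed under tensor product.

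Next I would produce the associativity constraint $a_{M,N,P}\colon (M\otimes N)\otimes P\to M\otimes(N\otimes P)$ as the canonical (strict) associator of $\C\lbb\hbar\rbb$-modules followed by the action of $\Phi^\hbar\in A^{\otimes 3}$. That $a_{M,N,P}$ is $A$-linear is exactly the first displayed identity $(1\otimes\Delta)\Delta=\Phi\cdot(\Delta\otimes 1)(\Delta)\cdot\Phi^{-1}$, which says conjugation by $\Phi^\hbar$ intertwines the two iterated coproducts acting on $M\otimes N\otimes P$; invertibility of $\Phi^\hbar$ makes it an isomorphism, and naturality is automatic as $\Phi^\hbar$ is a fixed element. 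The pentagon axiom, read off on $M\otimes N\otimes P\otimes Q$, is the $3$-cocycle identity $(1\otimes 1\otimes\Delta)(\Phi)\cdot(\Delta\otimes 1\otimes 1)(\Phi)=(1\otimes\Phi)\cdot(1\otimes\Delta\otimes 1)(\Phi)\cdot(\Phi\otimes 1)$; the unit constraints come from $(\epsilon\otimes 1)\circ\Delta=1=(1\otimes\epsilon)\circ\Delta$, and the triangle axiom from $(1\otimes\epsilon\otimes 1)(\Phi)=1$.

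Finally the braiding $c_{M,N}\colon M\otimes N\to N\otimes M$ is defined as $\tau\circ(R^\hbar\cdot -)$, where $\tau(m\otimes n)=(-1)^{|m||n|}\,n\otimes m$ is the graded flip; $A$-linearity of $c_{M,N}$ translates the identity $\Delta^{op}=R\cdot\Delta\cdot R^{-1}$, and invertibility of $R^\hbar$ makes $c_{M,N}$ an isomorphism. The two hexagon axioms, evaluated on triple tensor products, are precisely $(\Delta\otimes 1)(R)=\Phi^{312}R^{13}(\Phi^{132})^{-1}R^{23}\Phi$ and $(1\otimes\Delta)(R)=(\Phi^{231})^{-1}R^{13}\Phi^{213}R^{12}\Phi^{-1}$. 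Assembling these verifications gives the claimed braided tensor structure.

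I do not anticipate a genuine obstacle: the corollary records the well-known fact that the axioms of a quasi-triangular quasi-Hopf algebra are reverse-engineered from the braided monoidal axioms of its representation category, so the proof is an exercise in unwinding definitions. The only points needing mild care are bookkeeping rather than conceptual: (i) justifying that the $\hbar$-adic topology is inert once one restricts to finitely generated flat, hence free, $\C\lbb\hbar\rbb$-modules, so that $\otimes_{\C\lbb\hbar\rbb}$ behaves on the nose; and (ii) carrying the Koszul sign rule in the flip $\tau$ consistently through all the diagrams, as flagged in the remark preceding this corollary.
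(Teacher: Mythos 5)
Your proposal is correct and is the standard Drinfeld argument: the paper states this corollary without proof precisely because it is the well-known translation of the quasi-triangular quasi-Hopf axioms into the pentagon, triangle, and hexagon coherence axioms on $A\Mod$. Your bookkeeping points (i) and (ii) — finitely generated flat $\C\lbb\hbar\rbb$-modules are free so the completed tensor product is the ordinary one, and the Koszul sign in the flip $\tau$ — are exactly the adaptations required in the graded/$\hbar$-adic setting, and the remark in the paper preceding this subsection signals the same.
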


Given the Lie algebra $\tgv^\hbar$, one can consider the universal enveloping algebra $U(\tgv^\hbar)=U(\tgv)\lbb\hbar \rbb$. This is a bi-algebra over $\C\lbb\hbar \rbb$ with standard co-product $\Delta_0$ and standard co-unit $\epsilon$. Define an element $R\in U(\tgv^\hbar)^{\otimes 2}$ by $R^\hbar =e^{\hbar \Omega}$, where $\Omega$ is from equation \eqref{eq:classicalC}. Note that $\Omega$ is the symmetrization of $r$, which is the classical $R$ matrix of equation \eqref{eq:classicalr}. 

\begin{Thm}[\cite{drinfeld1991quasi} Theorem A]
There exists an element $\Phi^\hbar\in  U(\tgv^\hbar)^{\otimes 3}$, unique up to twisting, such that $(U(\tgv^\hbar), \Delta_0, \epsilon, \Phi^\hbar, R^\hbar)$ is a quasi-triangular quasi-Hopf algebra. 

\end{Thm}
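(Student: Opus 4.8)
The plan is to specialize Drinfeld's universal construction to $\tgv^\hbar$ and to check that nothing is disturbed by the $\Z$-grading or the Koszul sign rule. The input datum is the $\tgv$-invariant symmetric tensor $\Omega\in(\tgv\otimes\tgv)^{\tgv}$ of \eqref{eq:classicalC}: with the grading of Definition~\ref{def:grTakiff}, in which $\fh^*$ is shifted by $2$, the tensor $\Omega$ is homogeneous of degree $2$, so that $\hbar\Omega$ has degree $0$. Invariance of $\Omega$ is equivalent to the infinitesimal braid relations
\be
[\Omega_{12}+\Omega_{13},\Omega_{23}]=0=[\Omega_{13}+\Omega_{23},\Omega_{12}],\qquad [\Omega_{ij},\Omega_{kl}]=0\ \ (\{i,j\}\cap\{k,l\}=\varnothing)
\ee
in $U(\tgv)^{\otimes 3}$ (and their analogues on $n$ factors); equivalently, $t_{ij}\mapsto\Omega_{ij}$ defines a homomorphism from the holonomy Lie algebra of infinitesimal pure braids on $n$ strands into $U(\tgv^\hbar)^{\otimes n}$. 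Everything below is pulled back along this homomorphism, so the only feature special to $\tgv$ is the grading bookkeeping.

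First I would form the Knizhnik--Zamolodchikov connection $\nabla=d-\hbar\sum_{i<j}\Omega_{ij}\,d\log(z_i-z_j)$ on the configuration space of ordered points of $\C$; flatness is precisely the braid relations above (through the Arnold relations among the forms $d\log(z_i-z_j)$). I would then take $\Phi^\hbar:=\Phi_{\mathrm{KZ}}$, the renormalized monodromy of the three-point equation comparing the two canonical flat sections with prescribed asymptotics as $z_1\to z_2$ and as $z_2\to z_3$. By construction $\Phi_{\mathrm{KZ}}$ is group-like, equals $1\bmod\hbar$ (the first correction is $O(\hbar^2)$, proportional to $[\Omega_{12},\Omega_{23}]$), and---this is where the grading enters---since each factor $\Omega_{ij}$ is accompanied by exactly one power of $\hbar$, the series is homogeneous of degree $0$ and hence lies in $U(\tgv^\hbar)^{\otimes 3}$. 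It then remains to verify the axioms: the pentagon from the compatibility of the codimension-one boundary strata of $\overline{M}_{0,5}$ (a statement about nested degenerations); the counit and triangle identities from the normalization of $\nabla$ along the diagonals; and the two hexagons, relating $\Phi_{\mathrm{KZ}}$ to $R^\hbar=e^{\hbar\Omega}$, from the four-point picture together with the fact that $e^{\hbar\Omega}$ is the regularized monodromy around a single collision. The quasi-cocommutativity $(\Delta_0)^{op}=R^\hbar\Delta_0(R^\hbar)^{-1}$ holds on the nose, since $\Delta_0$ is cocommutative and $\Omega$ is invariant, so $R^\hbar$ commutes with $\Delta_0\big(U(\tgv^\hbar)\big)$. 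These are Drinfeld's arguments \emph{verbatim}; the super signs enter only through the sign rule in $[-,-]$ and the symmetry of $\Omega$, and alter nothing.

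For the clause ``unique up to twisting'' I would argue by induction on the order in $\hbar$: given two associators $\Phi,\Phi'$ for the same $(U(\tgv^\hbar),\Delta_0,\epsilon)$ that are both compatible with $R^\hbar$, I would construct a twist $F\in U(\tgv^\hbar)^{\otimes 2}$ with $F\equiv 1\bmod\hbar$ carrying $\Phi$ to $\Phi'$; at each order the obstruction is a cocycle in the degree-$0$ part of the relevant deformation complex, and it is automatically a coboundary there, exactly as in \cite{drinfeld1991quasi}, because the difference of two normalized associators has vanishing symmetrization. The main---indeed essentially the only---obstacle in the whole argument is the grading bookkeeping: one must make sure that $\Phi^\hbar$ and $R^\hbar$ are genuine homogeneous elements of $U(\tgv^\hbar)^{\otimes\bullet}$ and not merely formal series in $\hbar$ with coefficients of mixed degree. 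This is guaranteed by the single fact $\deg\Omega=2=-\deg\hbar$, built into Definition~\ref{def:grTakiff}; once it is in place, Drinfeld's construction goes through unchanged, and the comparison with the Hopf-algebra quantization of the next subsection will then follow from the canonical quantization functor of \cite{etingof1996quantization} recalled below.
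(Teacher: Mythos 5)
Your proposal is correct and takes essentially the same route as the paper: the paper cites Drinfeld's Theorem A and then spells out the KZ-associator construction $\Phi^\hbar = G_2^{-1}G_1$, while delegating the extension to the graded setting to a standing Remark that Drinfeld's and Etingof--Kazhdan's constructions carry over to graded Lie algebras verbatim provided one keeps the Koszul sign rule. You simply make the Remark's content explicit by pointing out that $\deg\Omega = 2 = -\deg\hbar$ forces $\hbar\Omega$, and hence the whole associator series, to be homogeneous of degree zero, which is exactly what is needed for $\Phi^\hbar$ to live in $U(\tgv^\hbar)^{\otimes 3}$ rather than in some mixed-degree completion; the pentagon/hexagon/triangle axioms and the uniqueness-up-to-twist by $\hbar$-order induction are then pulled back from Drinfeld unchanged. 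This is the same argument as the paper's, presented with the bookkeeping written out rather than deferred to the Remark.
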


We comment that one can construct explicitly the element $\Phi^\hbar$ via solutions of \textit{Knizhnik-Zamolodchikov} (KZ) equations. More precisely, consider the following differential equation:
\be
G'(x)=\frac{1}{\pi i }\lp \frac{\hbar \Omega^{12}}{x}+\frac{\hbar \Omega^{23}}{x-1}\rp G(x)\in U(\tgv^\hbar)^{\otimes 3}. 
\ee
These have two unique solutions $G_1, G_2$ such that $G_1(x)\sim x^{\frac{\hbar}{\pi i} \Omega^{12}}$ as $x\to 0$, and $G_2(x)\sim (1-x)^{\frac{\hbar}{\pi i} \Omega^{23}}$ as $x\to 1$. The Drinfeld's associator is most easily defined by $\Phi^\hbar=G_2^{-1}G_1$, which is an element in $U(\tgv^\hbar)^{\otimes 3}$ such that $(U(\tgv^\hbar), \Delta_0, \epsilon, \Phi^\hbar, R^\hbar)$ is a quasi-triangular quasi-Hopf algebra.  This solution is denoted by $\Phi^\hbar_{KZ}$, since it arise from solutions of KZ equations. Consequently, the category $U(\tgv^\hbar)\Mod$ has the structure of a braided tensor category, where braiding is given by $R^\hbar=e^{\hbar \Omega}$, and associativity is given by $\Phi_{KZ}^\hbar$. This is also true for the category of graded modules, which we denote by $U(\tgv^\hbar)\Mod^{gr}$.

\begin{Rem}

It is in fact clear that the category $U(\tgv^\hbar)\Mod$ (as well as the graded version) has a ribbon twist, given by $\theta=e^{-\hbar C}$, where $C$ is the quadratic Casimir. 

\end{Rem}

\subsubsection{Quantization as Hopf algebra}\label{subsec:quant2}

Now we introduce another quantization, this time as a quasi-triangular Hopf algebra. This is simply a quasi-triangular quasi-Hopf algebra where $\Phi^\hbar=1$. The idea of this quantization is that $\tgv$ is the double $\tgv=D(\fh)$ as a Lie bi-algebra, and therefore a quantization of $U(\tgv)$ can be chosen to be the double of $U(\fh)$ as a Hopf algebra \cite{drinfeld1986quantum}. Since we are dealing with graded Lie algberas, we will use the double construction for graded Hopf algebras \cite{gould1993quantum}. 

The first problem is that $U(\fh)$ is not finite-dimensional, we therefore need to define it in the $\hbar$-adic sense. Let us consider the filtered commutative and co-commutative Hopf algebra $U(\fh^*[-2])\lbb\hbar\rbb$. As a Hopf algebra, this is simply isomorphic to the symmetric algebra $S(\fh^*[-2])\lbb\hbar\rbb$. We would like to define a dual pairing between $S(\fh^*[-2])\lbb\hbar\rbb$ and $U(\fh)$, via $(x_a, t^b)=\delta_a^b$. However, this is not graded, and to make this graded, we define $(x_a,\hbar t^b)=\delta_a^b$, and extend it by $\hbar$ linearity to all $S(\hbar\fh^*[-2])\lbb\hbar\rbb$. 

\begin{Lem}
The above extends to a pairing between $U(\fh)\lbb \hbar\rbb\otimes_{\C\lbb \hbar\rbb} S(\hbar \fh^*[-2])\lbb\hbar\rbb\to \C\lbb\hbar\rbb$. 
\end{Lem}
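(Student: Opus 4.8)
The plan is to construct the pairing by working on the associated graded level and then reassembling. First I would recall that $U(\fh)$ carries the Poincaré–Birkhoff–Witt filtration with $\gr U(\fh) \cong S(\fh)$, and likewise $S(\hbar\fh^*[-2])\lbb\hbar\rbb$ is already commutative, identified with $S(\fh^*)$ after rescaling generators by $\hbar$. On symmetric algebras there is a standard perfect pairing $S(\fh)\times S(\fh^*)\to\C$ extending the duality $(x_a,t^b)=\delta_a^b$ — concretely $(x_{i_1}\cdots x_{i_k},\, t^{j_1}\cdots t^{j_k}) = \sum_{\sigma\in S_k}\prod_\ell \delta_{i_\ell}^{j_{\sigma(\ell)}}$ with the appropriate Koszul signs for the graded pieces $V\subseteq\fh$ and $V^*\subseteq\fh^*$, and $0$ between pieces of unequal total degree. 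The content of the lemma is that after inserting one factor of $\hbar$ per generator of $\fh^*$, this pairing lands in $\C\lbb\hbar\rbb$ (not $\Chh$), is $\C\lbb\hbar\rbb$-bilinear and continuous for the $\hbar$-adic topology, and is compatible with all the Hopf structure maps in the sense that makes $S(\hbar\fh^*[-2])\lbb\hbar\rbb$ a (restricted/topological) dual Hopf algebra of $U(\fh)\lbb\hbar\rbb$.

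The key steps, in order, are: (1) Define the pairing on monomials. Given a PBW monomial $u$ in $U(\fh)$ of total PBW-degree $k$ and a monomial in $S(\hbar\fh^*[-2])$ of the form $\hbar^k\,\xi_1\cdots\xi_k$ with $\xi_j\in\fh^*$, set the pairing to be $\hbar^k$ times the symmetric-algebra pairing of the symbol of $u$ with $\xi_1\cdots\xi_k$; declare monomials of unequal PBW-degree to pair to $0$, and extend $\C\lbb\hbar\rbb$-bilinearly and $\hbar$-adically continuously. (2) Check this is well-defined on $U(\fh)$, i.e. independent of the choice of PBW ordering: the ambiguity in reordering is a lower-degree correction, which pairs to zero against the given homogeneous element of $S(\hbar\fh^*[-2])$ precisely because of the matching-degree clause and the extra powers of $\hbar$ — this is where the rescaling by $\hbar$ is doing real work, absorbing the non-gradedness of the naive pairing. (3) Verify that the pairing takes values in $\C\lbb\hbar\rbb$: each monomial of $\hbar$-degree $k$ in the second factor is paired only against PBW-degree-$k$ elements, contributing $\hbar^k$ times a scalar, so there are no negative powers of $\hbar$ and the sum converges $\hbar$-adically. (4) Check gradedness: with $\hbar$ in degree $-2$, $t^a$ effectively in degree $2$, $\psi_{-,i}$ in degree $1$, the factor $\hbar^k$ exactly cancels the degree shift, so the pairing is degree-$0$, which is the whole point of the construction. (5) (If needed for later sections) record that the pairing intertwines product with coproduct on each side, using that the symmetric-algebra pairing does so and that the Hopf structure on $S(\fh^*[-2])$ is the standard (primitively generated) one while that on $U(\fh)$ is standard as well.

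I expect the main obstacle to be step (2), the well-definedness under PBW reordering, together with bookkeeping of Koszul signs coming from the odd part $V[-1]\subseteq\fh$ and $V^*[-1]\subseteq\fh^*$. One has to see clearly that when two PBW monomials differ by the commutation relations of $\fh$ (the terms $f_{ab}^c x_c$ and $\rho(x_a)^i{}_j\psi_+^j$), the difference is a $\C$-linear combination of strictly-lower-PBW-degree monomials, hence pairs to zero against any element of pure $\hbar$-degree $k$; this is elementary but must be stated so that the pairing descends from the tensor algebra to $U(\fh)$. A secondary, purely technical point is verifying continuity: one checks that for fixed $u\in U(\fh)\lbb\hbar\rbb$ the functional pairs any element of $\hbar^N S(\hbar\fh^*[-2])\lbb\hbar\rbb$ into $\hbar^N\C\lbb\hbar\rbb$, so the pairing is well-defined on the $\hbar$-adic completions and not merely on a dense subspace. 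Once these are in place the lemma follows, and the resulting perfect-ish pairing is exactly what is needed to run the graded double (quantum double) construction of \cite{drinfeld1986quantum, gould1993quantum} in the next subsection.
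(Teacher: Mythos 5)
The well-definedness claim in your step (2) is false, and this is a genuine gap rather than a technicality. You propose to pair a PBW monomial of degree $k$ only with the degree-$k$ piece of $S(\hbar\fh^*[-2])$, extending by linearity, and you argue that changing the PBW ordering produces only a lower-degree correction which pairs to zero ``because of the matching-degree clause.'' But that correction pairs nontrivially with the \emph{lower-degree} elements of $S(\hbar\fh^*[-2])$, which your clause does not kill. Concretely, take $\fh=\fsl_2$ with basis $E,F,H$. With the ordering $E<F$, the PBW monomial $EF$ has pure degree $2$, so your pairing gives $\langle EF,\ \hbar H^*\rangle=0$ by degree mismatch. With the ordering $F<E$ one writes $EF=FE+[E,F]=FE+H$, and then $\langle EF,\ \hbar H^*\rangle=\langle FE,\ \hbar H^*\rangle+\langle H,\ \hbar H^*\rangle=0+1=1$. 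The two answers disagree. Equivalently: your construction amounts to fixing a linear splitting $U(\fh)\cong S(\fh)$ of the PBW filtration by a choice of ordered basis, and this splitting does depend on the ordering (and is not $\fh$-equivariant for the adjoint action). Nothing about the $\hbar$-rescaling repairs this; the $\hbar$ bookkeeping is purely on the $\fh^*$ side.

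The paper sidesteps all of this by using the \emph{total symmetrization map} $\sigma:S(\fh)\to U(\fh)$, the canonical coalgebra isomorphism from PBW, rather than any PBW-basis splitting. This gives a basis-independent, $\fh$-equivariant identification $U(\fh)\lbb\hbar\rbb\cong S(\fh)\lbb\hbar\rbb$, and the graded dual of $S(\fh)\lbb\hbar\rbb$ is $\overline S(\fh^*)\lbb\hbar\rbb\cong S(\hbar\fh^*[-2])\lbb\hbar\rbb$ after $\hbar$-rescaling; the pairing is the transport of the evaluation pairing under this chain of isomorphisms. Note the resulting pairing differs from yours even when yours happens to be consistent: e.g. for $\fsl_2$, $\langle EF,\ \hbar H^*\rangle = \tfrac12$ under the symmetrizer since $\sigma^{-1}(EF)=EF+\tfrac12 H\in S(\fh)$ has a nonzero degree-$1$ component, whereas your symbol-only pairing would give $0$. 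This matters downstream: the explicit computations in the paper (evaluation at group-like elements $e^{\hbar X}$ in Lemma~2.6, and the assertion in Proposition~2.8 that conjugation preserves $S^n$ under the identification $U(\fh)\cong S(\fh)$) rely specifically on the symmetrizer. If you replace your PBW-symbol splitting with the symmetrizer, your outline collapses into essentially the paper's two-line proof, and the remaining steps (3)--(5) are either automatic or belong to later lemmas.
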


\begin{proof}
Using total symmetrizing map, we get an isomorphism $U(\fh)\lbb \hbar\rbb\to S(\fh)\lbb\hbar\rbb$, and therefore we obtain isomorphism:
\be
\Hom \lp S(\fh)\lbb\hbar\rbb, \C\lbb\hbar\rbb\rp=\overline{S}(\fh^*)\lbb\hbar\rbb\cong S(\hbar \fh^*[-2])\lbb\hbar\rbb.
\ee

\end{proof}

Therefore, we obtain a commutative Hopf algebra structure on $S(\hbar \fh^*[-2])\lbb\hbar\rbb$, defined by the following, for any $f, g\in S(\hbar \fh^*[-2]), a, b\in U(\fh)$: 
\be
f\cdot g(a)=f\otimes g(\Delta_0(a)),\qquad \Delta(f)(a, b)=f(a\cdot_{op} b)=(-1)^{|a||b|}f(b\cdot a).
\ee
In the above, $a\cdot_{op} b$ means the opposite multiplication. In other words, the Hopf algebra $S(\hbar \fh^*[-2])\lbb\hbar\rbb$ is defined to be the linear dual of $U(\fh)^{op}\lbb\hbar\rbb$ over $\C\lbb\hbar\rbb$. Using the $\C^\times$ grading, one can show that $\Delta(f)$ is a finite expression for any finite order of $\hbar$. The pairing between $U(\fh)\lbb\hbar\rbb$ and $S(\hbar \fh^*[-2])\lbb\hbar\rbb$ is a Hopf pairing. Therefore we can form the Drinfeld double over $\C\lbb\hbar\rbb$, which by definition is the matched product (for a detailed definition and formula, see \cite{agore2014classifying}):
\be
D(U(\fh)\lbb\hbar\rbb)=U(\fh)\lbb\hbar\rbb \match S(\hbar \fh^*[-2])\lbb\hbar\rbb,
\ee
 Note that since all the definitions of the matched pair involves at least one order of $\hbar$ for elements in $S(\hbar \fh^*[-2])\lbb\hbar\rbb$, one can simply multiply $\hbar^{-1}$ to all formulas. In particular, $(U(\fh)\lbb\hbar\rbb, S(\fh^*[-2])\lbb\hbar\rbb)$ is also a matched pair of Hopf algebras.

\begin{Def}
We define the graded algebra $\CA^\hbar$ to be the Hopf algebra:
\be
\CA^\hbar=U(\fh)\lbb\hbar\rbb \match S(\fh^*[-2])\lbb\hbar\rbb. 
\ee
\end{Def}

By Drinfeld's argument, this algebra is a quasi-triangular Hopf algebra, where the Hopf structure is given by the product of Hopf structures on $U(\fh)\lbb\hbar\rbb$ and $S(\fh^*[-2])\lbb\hbar\rbb$. The $R$ matrix is given by a formal sum over $S(\fh)\lbb\hbar\rbb$ with the dual basis, which is the unique such element representing the identity map on $U(\fh)$. More precisely, let $x_i$ be a set of basis for $\fh$ and $t^i$ dual basis in $\fh^*[-2]$, then $R$ has the form:
\be
R=\sum_{p}  p(x_i)p(\hbar t^i)=1+\hbar \sum x_i\otimes t^i+O(\hbar^2)\in U(\fh)\otimes S(\hbar\fh^*[-2])\lbb\hbar\rbb. 
\ee
In the above, $p$ runs over all polynomials over $\fh$. This $R$ satisfies:
\be
R(1\otimes a)=a,\qquad \forall a\in U(\fh)
\ee
where $R(1\otimes a)$ means we evaluate the second factor at $a$. 

In the following, we give very explicit formulas for this quasi-triangular Hopf algebra structure. 

\subsubsection{Explicit formula for the double}\label{subsubsec:trivcom}

We derive explicit commutation relations for the quantization as a double and its co-product, as well as a formula for the R-matrix. We will only derive the co-product up to the first order of $\hbar$, although it is not difficult to compute it to all orders of $\hbar$. As before, let us fix a set of graded basis $\{x_i\}$ for $\fh$ and corresponding dual basis $\{t^i\}\subseteq \fh^*[-2]$. We first work out the co-product of $S(\fh^*[-2])\lbb\hbar\rbb$. For this, consider elements in $U(\fh)\lbb\hbar\rbb$ of the form $e^{\hbar X}$ where $X\in \fh$. These form group-like elements in $U(\fh)\lbb\hbar\rbb$ such that:
\be
e^{\hbar X}e^{\hbar Y}=e^{H(\hbar X,\hbar Y)}
\ee
where $H(\hbar X,\hbar Y)=\hbar X+\hbar Y+\hbar^2/2[X,Y]+\cdots$ is the famous \textit{Baker-Cambell-Hausdorff} (BCH) formula. Clearly the co-product $\Delta$ is defined so that:
\be
\Delta (f)(e^{\hbar X}, e^{\hbar Y})= f(e^{\hbar X}\cdot_{op} e^{\hbar Y}),
\ee
We first show that such evaluation is enough to determine $f$. 

\begin{Lem}\label{Lem:evaluationatgroup}
Let $f\in S(\hbar\fh^*[-2])$, if $f(e^{\hbar X})=0$ for all $X\in \fh$, then $f=0$. 

\end{Lem}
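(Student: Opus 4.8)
The plan is to transport the pairing through the symmetrization isomorphism, exactly as in the proof of the preceding lemma, and to recognize the evaluation $X\mapsto f(e^{\hbar X})$ as simply recording $f$ as a polynomial function on $\fh$. First I would make the shape of $f$ explicit: since $S(\hbar\fh^*[-2])$ is the symmetric algebra on the space $\hbar\fh^*[-2]$, any $f$ is a \emph{finite} sum $f=\sum_{k=0}^n f_k$, where $f_k$ is the component of symmetric degree $k$; writing $f_k$ in terms of the generators $\hbar t^i$ we may regard $f_k=\hbar^k P_k$ for a homogeneous degree-$k$ polynomial $P_k$ in the $t^i$, and we let $\ol{P}_k\colon\fh\to\C$ be the associated polynomial function under $\fh^{**}\cong\fh$.

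\textbf{Key identity.} Recall from the proof of the preceding lemma the $\C\lbb\hbar\rbb$-module isomorphism $U(\fh)\lbb\hbar\rbb\cong S(\fh)\lbb\hbar\rbb$ given by total symmetrization, under which the pairing with $S(\hbar\fh^*[-2])\lbb\hbar\rbb$ becomes the standard symmetric pairing normalized by $\langle x_a,\hbar t^b\rangle=\delta_a^b$, and under which the group-like element $e^{\hbar X}$ is carried to the symmetric exponential $\sum_{m\ge 0}\hbar^m X^m/m!$ with $X^m\in S^m(\fh)$. Since the symmetric pairing kills $S^m(\fh)\otimes S^k(\hbar\fh^*[-2])$ for $m\neq k$, and on the diagonal satisfies $\langle X^k,\hbar^k P_k\rangle = k!\,\ol{P}_k(X)$, the $1/m!$ from the exponential cancels and one obtains
\be
f\big(e^{\hbar X}\big)=\sum_{k=0}^n \hbar^{k}\,\ol{P}_k(X)\qquad(X\in\fh).
\ee

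\textbf{Conclusion.} Suppose $f(e^{\hbar X})=0$ for all $X\in\fh$. Comparing coefficients of $\hbar^k$ — the powers of $\hbar$ being $\C$-linearly independent — we get that $\ol{P}_k$ vanishes identically on $\fh$ for each $k$. A polynomial function on a finite-dimensional vector space over the infinite field $\C$ that vanishes everywhere is the zero polynomial, hence $P_k=0$ for all $k$ and therefore $f=0$.

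\textbf{Expected obstacle.} The only point requiring genuine care is the graded/super bookkeeping: when $\fh$ has an odd part one should read ``$X\in\fh$'' in the functor-of-points sense, allowing $X$ to have coordinates in an auxiliary commutative super test ring, and carry the Koszul signs through both the symmetric pairing $S^k(\fh)\otimes S^k(\fh^*)\to\C$ and the expansion of $e^{\hbar X}$ (on the abelian ideal $V[-1]$ one has simply $e^{\hbar X}=1+\hbar X$, and in general the functoriality of the symmetrization coalgebra isomorphism still delivers the symmetric-exponential description). This is a matter of signs rather than of a new idea; everything else is the elementary fact that the symmetric-algebra pairing identifies an element of $S(\hbar\fh^*[-2])$ with its own polynomial function on $\fh$.
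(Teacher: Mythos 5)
Your proof is correct and follows essentially the same route as the paper's: both transport the pairing through the symmetrization isomorphism $U(\fh)\lbb\hbar\rbb\cong S(\fh)\lbb\hbar\rbb$, expand $e^{\hbar X}$ as the symmetric exponential, observe that the $1/m!$ from the exponential cancels the $m!$ from the symmetric pairing so that $f(e^{\hbar X})=\sum_k \hbar^k \ol{P}_k(X)$, and then conclude degree by degree from the vanishing of a polynomial function on a $\C$-vector space. Your write-up is slightly more explicit about the factorial normalization and flags the graded/super bookkeeping (the functor-of-points reading of ``$X\in\fh$'' when $V[-1]$ is odd), which the paper's proof passes over silently — a fair observation, but not a genuinely different argument.
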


\begin{proof}
Note that if we write $f=\sum_r p_r$ where $p_r$ is symmetric polynomials of degree $r$, then $f(e^{\hbar X})=\sum_r \hbar^rp_r(X)$, and therefore we only need to assume that $f=p_r$ for some symmetric polynomial of degree $r$. In this case, assume $X=\sum c^i x_i$, we have:
    \be
f(e^{\hbar X})=\hbar^r/r! (f, X^r)=\hbar^r/r! \lp f, \lp\sum c^i x_i\rp^r\rp. 
    \ee
    Note that we have:
    \be
\lp\sum c^i x_i\rp^r=\sum_{p}\sum_{k_1+\ldots +k_p=r} (c^{i_1})^{k_1}\cdots (c^{i_p})^{k_p}S(x_{i_1}, \cdots, x_{i_p}),
    \ee
    where $S(x_{i_1}, \cdots, x_{i_p})$ is a total symmetrization of $(x_{i_1})^{k_1}\cdots (x_{i_p})^{k_p}$. By the definition of the pairing, this means simply that $f(c_i)=0$ for all $c_i$, which means $f=0$. 

\end{proof}

We now view $t^i$ as elements in $S(\fh^*[-2])\lbb\hbar\rbb$ whose pairing with $U(\fh)\cong S(\fh)$ is so that $t^i$ only pair with $S^1(\fh)$. 

\begin{Cor}
The co-product on $\fh^*[-2]$ is given by the following, up to first-order in $\hbar$:
\be
\Delta (t^i)=t^i\otimes 1+1\otimes t^i-\frac{\hbar}{2}\sum (-1)^{|t^k||x_j|} f_{jk}^i t^j\otimes t^k+\CO(\hbar^2).
\ee 
 The co-unit is given by $\epsilon_\hbar (t^i)=0$ and anti-pode given by $S_\hbar(t^i)=-t^i$. In particular, $\CA^\hbar$ satisfies $\Delta-\Delta^{op}= [\Delta_0,\hbar r]+\hbar^2\CO(1)$, making $\CA^\hbar$ a quantization of $(U(\tgv), \hbar\delta_r)$.
\end{Cor}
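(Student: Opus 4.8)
The plan is to read off $\Delta(t^i)$ directly from its definition as the dual of the opposite multiplication on $U(\fh)\lbb\hbar\rbb$, using the group-like elements $e^{\hbar X}$ as test vectors. By the same $\C^\times$-weight argument that proves Lemma~\ref{Lem:evaluationatgroup}, an element of the relevant completion of $S(\hbar\fh^*[-2])^{\otimes 2}$ is determined by its pairings against all $e^{\hbar X}\otimes e^{\hbar Y}$ with $X,Y\in\fh$ homogeneous, so it suffices to compute
\[
\langle e^{\hbar X}\otimes e^{\hbar Y},\ \Delta(\hbar t^i)\rangle=(-1)^{|X||Y|}\langle e^{\hbar Y}e^{\hbar X},\ \hbar t^i\rangle=(-1)^{|X||Y|}\big\langle e^{H(\hbar Y,\hbar X)},\ \hbar t^i\big\rangle,
\]
where $H(\hbar Y,\hbar X)=\hbar Y+\hbar X+\tfrac{\hbar^2}{2}[Y,X]+\CO(\hbar^3)$ is the Baker--Campbell--Hausdorff series. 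Since the pairing identifies $S(\hbar\fh^*[-2])$ with the graded dual of $U(\fh)\cong S(\fh)$ and $\hbar t^i$ is supported on the linear part $\fh=S^1(\fh)$, only the degree-one component of $e^{H}$, namely $H$ itself, contributes.

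Writing $X=\sum_j c^j x_j$, $Y=\sum_k d^k x_k$ and using $[x_k,x_j]=\sum_i f_{kj}^i x_i$ together with $\langle x_l,\hbar t^i\rangle=\delta_l^i$, this gives
\[
\langle e^{\hbar X}\otimes e^{\hbar Y},\ \Delta(\hbar t^i)\rangle=\hbar(c^i+d^i)+\tfrac{\hbar^2}{2}\sum_{j,k}f_{kj}^i\,d^k c^j+\CO(\hbar^3).
\]
Comparing with the pairings of $\hbar t^i\otimes 1$, $1\otimes\hbar t^i$ and $(\hbar t^j)\otimes(\hbar t^k)$ against $e^{\hbar X}\otimes e^{\hbar Y}$ (which are $\hbar c^i$, $\hbar d^i$ and $\hbar^2 c^j d^k$ on even test vectors, with the evident Koszul-sign modifications in general) pins down the quadratic term of $\Delta(\hbar t^i)$ as $\tfrac12\sum_{j,k}f_{kj}^i(\hbar t^j)\otimes(\hbar t^k)$; dividing out $\hbar$ and rewriting $f_{kj}^i=-(-1)^{|x_k||x_j|}f_{jk}^i$ by graded antisymmetry, together with $|t^k|\equiv|x_k|\bmod 2$, yields the asserted formula for $\Delta(t^i)$. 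The co-unit $\epsilon_\hbar(t^i)=\langle 1,\hbar t^i\rangle=0$ is immediate because $1\in S^0(\fh)$, and dualizing the antipode $S(e^{\hbar X})=e^{-\hbar X}$ of $U(\fh)$ gives $\langle e^{\hbar X},S_\hbar(\hbar t^i)\rangle=\langle e^{-\hbar X},\hbar t^i\rangle=-\hbar c^i$, hence $S_\hbar(t^i)=-t^i$.

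For the final assertion, the elements $x_a,\psi_+^i\in\fh$ lie in the sub-Hopf-algebra $U(\fh)\lbb\hbar\rbb\subset\CA^\hbar$, so they stay primitive and $(\Delta-\Delta^{op})(z)=0=[\Delta_0(z),\hbar r]$ for all $z\in\fh$, matching $\delta_r(x_a)=\delta_r(\psi_+^i)=0$. On $t^i$, the first-order formula and the rule $\Delta^{op}=\tau\circ\Delta$ give $(\Delta-\Delta^{op})(t^i)=-\tfrac{\hbar}{2}\sum(-1)^{|t^k||x_j|}f_{jk}^i\big(t^j\otimes t^k-(-1)^{|t^j||t^k|}t^k\otimes t^j\big)+\CO(\hbar^2)$, which after reindexing equals $\hbar\,\delta_r(t^i)+\CO(\hbar^2)=[\Delta_0(t^i),\hbar r]+\CO(\hbar^2)$ by the explicit co-bracket recorded after \eqref{eq:classicalr}. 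Since $\tfrac1\hbar(\Delta-\Delta^{op})\bmod\hbar$ is automatically a Lie-bialgebra co-bracket on $\tgv$, hence a $1$-cocycle determined by its values on Lie-algebra generators, agreement on $\fh$ and on $\fh^*[-2]$ forces agreement everywhere; combined with the identification $\CA^\hbar/\hbar\CA^\hbar\cong U(\tgv)$ coming from the matched-pair construction, this exhibits $\CA^\hbar$ as a quantization of $(U(\tgv),\hbar\delta_r)$.

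The step I expect to be the real obstacle is the sign bookkeeping rather than anything conceptual: one must carry Koszul signs consistently through the opposite product $e^{\hbar X}\cdot_{op}e^{\hbar Y}$, the BCH series and the graded pairing on tensor powers, and in particular isolate each structure constant $f_{jk}^i$ by testing against homogeneous $X,Y$ of the appropriate parities (the $\fg$-$\fg$ part needs $X,Y\in\fg$, the $\fg$-$V$ part needs one of them in $V[-1]$, where $e^{\hbar X}=1+\hbar X$). One also has to invoke the $\C^\times$-grading argument a second time, both to know that $\Delta(t^i)$ is a well-defined element order by order and to extend Lemma~\ref{Lem:evaluationatgroup} to the tensor square used above.
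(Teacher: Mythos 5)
Your argument is the same one the paper uses: you evaluate the dual pairing against group-like elements $e^{\hbar X}$, $e^{\hbar Y}$, apply the Baker--Campbell--Hausdorff series, and read off the co-product from the fact that $\hbar t^i$ pairs only with the linear part of $U(\fh)\cong S(\fh)$; the co-unit and antipode are obtained by the same dualization, and the tensor-square extension of Lemma~\ref{Lem:evaluationatgroup} that you flag is indeed the only small gap to fill. You also carry out the final ``In particular'' claim explicitly (reindexing $(\Delta-\Delta^{op})(t^i)$ to recover $\hbar\,\delta_r(t^i)$), which the paper asserts without writing out, so your write-up is if anything slightly more complete.
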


\begin{proof}
The co-product follows from BCH formula, since:
\be
e^{\hbar Y}e^{\hbar X}=e^{\hbar X+\hbar Y+\frac{\hbar^2}{2}[Y,X]+\CO(\hbar^2)},
\ee
and 
\be
\Delta(t^i)(e^{\hbar X}, e^{\hbar Y})=t^i(\hbar X+\hbar Y+(-1)^{|X||Y|}\frac{\hbar^2}{2}[Y,X])+\CO(\hbar^2).
\ee
The result now follows since $[X,Y]=-(-1)^{|X||Y|}[Y,X]$. 

For the co-unit statement, by definition, it is linear dual to the unit, and therefore:
\be
\epsilon_\hbar (t^i)=t^i(1)=0,
\ee
since $t^i$ is linear dual to $\fh\subseteq U(\fh)$.

For the anti-pode statment, by definition, it is linear dual to the anti-pode in $U(\fh)$, which means:
\be
S_\hbar (t^i)(e^{\hbar X})=t^i(e^{-\hbar X})=-t^i(\hbar X)=-t^i(e^{\hbar X}).
\ee

\end{proof}

We now compute the commutator between $x_i$ and $t^j$.

\begin{Prop}
The commutation relation between $x_i$ and $t^j$ is given by:
\be
[t^j, x_i]=f_{ik}^j t^k,
\ee
and in particular, as an algebra, $\CA^\hbar\cong U(\tgv)\lbb\hbar\rbb$. 
 
\end{Prop}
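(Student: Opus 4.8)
The plan is to read $[t^j,x_i]$ off the cross--relation of the matched product $\CA^\hbar=U(\fh)\lbb\hbar\rbb\match S(\fh^*[-2])\lbb\hbar\rbb$, and then to obtain the algebra isomorphism by a Poincar\'e--Birkhoff--Witt comparison. First I would recall the straightening rule for a Drinfeld double attached to a Hopf pairing: for $a\in U(\fh)\lbb\hbar\rbb$ and $f\in S(\fh^*[-2])\lbb\hbar\rbb$ the product $f\cdot a$ is rewritten as a sum of PBW--ordered terms ``$(U(\fh)$-part$)\cdot(S(\fh^*[-2])$-part$)$'' by a formula of the schematic shape $f\cdot a=\sum\pm\,\langle a_{(1)},f_{(3)}\rangle\,\langle a_{(3)},S^{\pm1}(f_{(1)})\rangle\,a_{(2)}\,f_{(2)}$, written in Sweedler notation with the coproducts and antipode of the excerpt and with the graded sign rule kept throughout. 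I would then specialize to $a=x_i\in\fh$ and $f=t^j\in\fh^*[-2]$. The computation collapses for two reasons: $x_i$ is primitive, so its iterated coproduct is the sum of the three terms carrying a single $x_i$; and $\langle x_i,-\rangle$ annihilates the unit and all of $S^{\ge 2}(\fh^*[-2])$, while $\langle x_i,\hbar t^k\rangle=\delta_i^k$, $\epsilon(t^j)=0$ and $S(t^j)=-t^j$. Hence every piece of $\Delta(t^j)$ lying in $S^{\ge 2}(\fh^*[-2])$ decouples, and the only surviving contributions to $f\cdot a$ are: the one that reproduces $x_i t^j$ (which drops out of the graded commutator), a pair of scalar terms that cancel one another, and the term in which the $\fh^*\otimes\fh^*$-part of $\Delta(t^j)$ --- i.e.\ precisely the term $-\tfrac\hbar2\sum(-1)^{|t^k||x_l|}f^j_{lk}\,t^l\otimes t^k$ coming from the single Baker--Campbell--Hausdorff commutator, together with its mirror --- is paired against the coproduct of $x_i$, one tensor factor being eaten by the pairing and the other surviving. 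After the $\hbar^{-1}$-rescaling that defines $\CA^\hbar$, this term is $\hbar$-independent and equals $f^j_{ik}\,t^k$; it is nothing but the coadjoint action of $\fh$ on $\fh^*$, which is the defining cross--bracket of the Lie bialgebra double $D(\fh)=\tgv$. The identity is exact, with no $O(\hbar^2)$ tail, precisely because $x_i$ is primitive and pairs only with linear elements, so any higher Baker--Campbell--Hausdorff term would need to pair two or more dual elements against the single $x_i$ and contributes nothing.

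Granting the cross--relation, the isomorphism $\CA^\hbar\cong U(\tgv)\lbb\hbar\rbb$ is formal. As a $\C\lbb\hbar\rbb$-module, $\CA^\hbar=U(\fh)\lbb\hbar\rbb\otimes_{\C\lbb\hbar\rbb}S(\fh^*[-2])\lbb\hbar\rbb$, which by PBW for $U(\fh)$ is isomorphic to $\Sym(\fh)\lbb\hbar\rbb\otimes_{\C\lbb\hbar\rbb}\Sym(\fh^*[-2])\lbb\hbar\rbb\cong\Sym(\tgv)\lbb\hbar\rbb$. As a $\C\lbb\hbar\rbb$-algebra it is generated by $\fh$ and $\fh^*[-2]$ subject to the relations of $U(\fh)$ among the $\fh$-generators (exactly the relations of the Lie subalgebra $\fh\subset\tgv$), the (super)commutativity of the $\fh^*[-2]$-generators (matching $[\fh^*,\fh^*]=0$ in $\tgv$), and the cross--relation just found (matching the $[\fh,\fh^*]$-bracket of $\tgv=D(\fh)$). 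Thus the universal property of $U(\tgv)\lbb\hbar\rbb$ produces a surjection of $\C\lbb\hbar\rbb$-algebras $U(\tgv)\lbb\hbar\rbb\twoheadrightarrow\CA^\hbar$; it is filtered for the PBW filtrations and is the identity on generators, so its associated graded map is the identity of $\Sym(\tgv)\lbb\hbar\rbb$, and hence it is an isomorphism.

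The step I expect to be the main obstacle is entirely bookkeeping: fixing the exact form of the double's straightening relation in the graded, $\hbar$-adic setting --- which tensor factor carries the opposite (co)multiplication, where the antipode sits, and all the Koszul signs --- and then checking honestly that the scalar contributions cancel and that the surviving term appears with coefficient exactly $f^j_{ik}$. Everything else is short.
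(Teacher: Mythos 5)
Your starting point is identical to the paper's: both proofs read $[t^j,x_i]$ off the straightening relation of the Drinfeld double, use primitivity of $x_i$ to collapse the iterated coproduct of $x_i$ to three terms, and finish with a PBW/filtration comparison for the algebra isomorphism. The PBW part of your argument is a correct and reasonable fleshing-out of the paper's ``in particular.''

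The divergence, and the gap, is in how you pin down the surviving cross-term. The paper does \emph{not} use the explicit BCH expansion of $\Delta(t^j)$ at all. Instead it pairs the first and third contributions against an arbitrary $a\in U(\fh)$ and observes that they combine to $t^j([x_i,a])$, i.e.\ the coadjoint action $t^j\lhd x_i$. Since the adjoint action of $\fh$ on $U(\fh)\cong S(\fh)$ is a derivation that preserves each $S^n$, the functional $t^j\lhd x_i$ vanishes on $S^{\ne1}$, hence is literally $f^j_{ik}t^k$ with no higher corrections. You instead try to evaluate the two pairing slots directly against $\Delta(t^j)$ and read off the answer from the quadratic BCH term, asserting that higher BCH terms drop because ``any higher BCH term would need to pair two or more dual elements against the single $x_i$.'' This reasoning is not correct: the $O(\hbar^n)$ BCH term in $\Delta(t^j)$ does have components of the shape $t^l\otimes(\text{degree }n-1)$, so $x_i$ pairs with the single linear slot $t^l$ and the degree-$(n-1)$ factor survives. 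These surviving $S^{\ge2}$ contributions are genuinely present in each of the two cross-terms; they only cancel between the first and third terms, and seeing that they cancel is exactly the content of the identity $t^j(x_ia)-(-1)^{|x_i||a|}t^j(ax_i)=t^j([x_i,a])$ together with degree preservation of $[x_i,-]$. So you would either need to supply that cancellation argument explicitly (at which point you have reproduced the paper's route), or abandon the claim that the explicit $\hbar$-linear BCH coefficient already gives the exact answer. The paper's approach is preferable because it never needs the coproduct beyond its defining property $\Delta(f)(a,b)=\pm f(ba)$, which is why the paper only bothers to record $\Delta(t^j)$ to first order elsewhere and does not invoke it in this proof.
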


\begin{proof}

By the definition of Drinfeld double, we have:
\be
(x_i\otimes 1)(1\otimes t^j)=x_i\otimes t^j,
\ee
where the opposite product is of the form (see \cite{gould1993quantum}):
\be
(1\otimes t^j)(x_i\otimes 1)=\sum (t^j)^{(1)}(Sx_i^{(1)}) (t^j)^{(3)}(x_i^{(3)}) x_i^{(2)}(t^j)^{(2)} \times (-1)^{(|t^j|+|x_i^{(1)}|)(|x_i|+|x_i^{(3)}|)}
\ee
Since $x_i$ is a primitive element, the above receives contribution from $3$ terms. The first one is:
\be
-(-1)^{(|t^j|+|x_i|)(|x_i|)} (t^j)^{(1)}(x_i)  (t^j)^{(2)} \epsilon (t^j)^{(3)}=-(-1)^{(|t^j|+|x_i|)(|x_i|)} (t^j)^{(1)}(x_i)  (t^j)^{(2)}.
\ee
The second one is:
\be
(-1)^{|t^j||x_i|}  x_i\otimes t^j.
\ee
The third one is:
\be
\epsilon (t^j)^{(1)}(t^j)^{(2)} (t^j)^{(3)}(x_i)=(t^j)^{(1)}(t^j)^{(2)}(x_i).
\ee
Let us sum up the first and third term, and pair this with an element $a\in U (\fh)$. The first term gives:
\be
\begin{aligned}
    -(-1)^{(|t^j|+|x_i|)(|x_i|)} &(t^j)^{(1)}(x_i)  (t^j)^{(2)}(a)\\ &=-(-1)^{|x_i|+|t^j||x_i|+|(t^j)^{(2)}||x_i|} (t^j)^{(1)}\otimes (t^j)^{(2)} (x_i\otimes a)\\ &=-(-1)^{|x_i|+|t^j||x_i|+|(t^j)^{(2)}||x_i|}(-1)^{|x_i||a|} t^j (ax_i).
\end{aligned}
\ee
Note that this is only nonzero when $|a|=|(t^j)^{(2)}|$, the above simplifies to
\be
-(-1)^{|x_i|+|t^j||x_i|} t^j (ax_i). 
\ee
On the other hand, the third term gives:
\be
(t^j)^{(1)}(a)(t^j)^{(2)}(x_i)= (-1)^{|a||(t^j)^{(2)}|}(t^j)^{(1)}\otimes (t^j)^{(2)} (a\otimes x_i)=(-1)^{|a||(t^j)^{(2)}|}(-1)^{|a||x_i|} t^j (x_ia). 
\ee
Again this is non-zero only when $|x_i|=|(t^j)^{(2)}|$ and so we are left with $t^j (ax_i)$. The sum of these two contributions is
\be
t^j (x_ia)-(-1)^{|x_i|+|t^j||x_i|} t^j (ax_i)
\ee
Now use the fact that the above is non-zero only when $|t^j|=|x|+|a|$, it simplifies to
\be
t^j (x_ia)-(-1)^{|x_i||a|}t^j (ax_i)=t^j([x_i, a])=(t^j\lhd x_i)(a). 
\ee
Here $\lhd$ means right conjugation action of $\fh$ on $S(\fh^*)$. When all dust settles, we find the following commutation relation:
\be
(1\otimes t^j)(x_i\otimes 1)-(-1)^{|t^j||x_i|} (x_i\otimes 1)(1\otimes t^j)=t^j\lhd x_i\otimes 1.
\ee
Since conjugation action of $\fh$ on $U(\fh)\cong S(\fh)$ preserves $S^n$, we see that $t^j\lhd x_i$ is equal to $\sum f_{ik}^j t^k$.

\end{proof}

We also have the following explicit formula for the $R$ matrix.

\begin{Prop}
The $R$ matrix of $\CA^\hbar$ is given by:
\be
R=\exp\lp\hbar \sum_i x_i\otimes t^i\rp=\exp \lp \hbar r\rp. 
\ee 

\end{Prop}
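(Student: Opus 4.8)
The plan is to take as given (from the discussion preceding the statement) that the universal $R$-matrix lies in $U(\fh)\lbb\hbar\rbb\otimes S(\hbar\fh^*[-2])\lbb\hbar\rbb$ and is the canonical element of the Hopf pairing $\langle-,-\rangle$, characterized by $R(1\otimes a)=a$ for all $a\in U(\fh)\lbb\hbar\rbb$, where $R(1\otimes a)$ denotes the result of evaluating the second tensor factor of $R$ against $a$. This property determines $R$ uniquely: expanding $R=\sum_\mu e_\mu\otimes g_\mu$ in a PBW basis $\{e_\mu\}$ of $U(\fh)\lbb\hbar\rbb$ (so that $g_\mu\in S(\hbar\fh^*[-2])\lbb\hbar\rbb$ carries the appropriate power of $\hbar$ and the sum converges $\hbar$-adically) and specializing $a=e_\nu$ forces $\{g_\mu\}$ to be the dual basis of $\{e_\mu\}$. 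Hence it suffices to verify that $\exp(\hbar r)=\exp\!\big(\sum_i x_i\otimes\hbar t^i\big)$ — which is a well-defined element of the same completed tensor product, since the $n$-th power of $\hbar r$ carries $\hbar^n$ through the factors $\hbar t^i$ — satisfies the very same identity.

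By Lemma \ref{Lem:evaluationatgroup}, applied to the $S(\hbar\fh^*[-2])$-component of each PBW summand sitting in the first tensor slot, it is enough to check that for every $X=\sum_i c^i x_i\in\fh$,
\be
\exp(\hbar r)(1\otimes e^{\hbar X})=e^{\hbar X}.
\ee
Write $\phi:=\langle e^{\hbar X},-\rangle\colon S(\hbar\fh^*[-2])\lbb\hbar\rbb\to\C\lbb\hbar\rbb$ for the operation of evaluating against $e^{\hbar X}$. Since $e^{\hbar X}$ is group-like and $\langle-,-\rangle$ is a Hopf pairing, $\phi$ is an algebra homomorphism; because $\C\lbb\hbar\rbb$ is purely even, $1\otimes\phi$ is a (super)algebra homomorphism on $U(\fh)\lbb\hbar\rbb\otimes S(\hbar\fh^*[-2])\lbb\hbar\rbb$, and so it commutes with the formal exponential. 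Since $\hbar t^i$ pairs only with the degree-one part $\hbar X$ of $e^{\hbar X}$ and $\langle x_j,\hbar t^i\rangle=\delta_j^i$, we get $\phi(\hbar t^i)=\hbar c^i$, whence
\be
\exp(\hbar r)(1\otimes e^{\hbar X})=(1\otimes\phi)\exp\!\big(\textstyle\sum_i x_i\otimes\hbar t^i\big)=\exp\!\big(\textstyle\sum_i \hbar c^i x_i\big)=e^{\hbar X},
\ee
which is the desired identity. Therefore $\exp(\hbar r)=R$.

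There is no essential obstacle here; the only points requiring care are bookkeeping ones. One must keep the powers of $\hbar$ consistent with the convention that the pairing is defined on $U(\fh)\otimes S(\hbar\fh^*[-2])$ (so the $\hbar$'s are absorbed into the $t^i$'s), and one must check that the Koszul signs never interfere — which they do not, since each $x_i\otimes\hbar t^i$ is even (its degree is $|x_i|+|t^i|=|x_i|+(2-|x_i|)=2$) and $\phi$ annihilates the odd part of $S(\hbar\fh^*[-2])$ while introducing no signs on the even part. As an alternative to the group-like argument, one may instead expand the canonical element $R=\sum_\mu e_\mu\otimes e^\mu$ in the symmetrized PBW basis $e_\mu=\mathrm{sym}(x^\mu)$: the definition of the pairing via the symmetrization isomorphism $U(\fh)\cong S(\fh)$ gives $e^\mu=\frac{1}{\mu!}(\hbar t)^\mu$, and a multinomial identity then recognizes $\sum_\mu\mathrm{sym}(x^\mu)\otimes\frac{1}{\mu!}(\hbar t)^\mu$ as $\exp(\hbar r)$.
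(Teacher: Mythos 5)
Your proof is correct and follows essentially the same route as the paper: characterize $R$ by the identity $R(1\otimes a)=a$, invoke Lemma~\ref{Lem:evaluationatgroup} to reduce to group-like elements $e^{\hbar X}$, use group-likeness to show evaluation against $e^{\hbar X}$ is multiplicative so it commutes with $\exp$, and finally compute $\hbar r(1\otimes e^{\hbar X})=\hbar X$. The extra bookkeeping you spell out (the PBW justification of uniqueness, and the observation that each $x_i\otimes\hbar t^i$ is even so no Koszul signs intervene) is compatible with and implicit in the paper's argument.
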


\begin{proof}

This statement comes from the fact that the $R$ matrix as an element in $U(\fh)\otimes S(\hbar\fh^*[-2])\lbb\hbar\rbb$ is the unique element representing the identity morphism $U(\fh)\to U(\fh)$. Namely we must have:
\be
R(1\otimes a)=a,\qquad \forall a\in U(\fh).
\ee 
We claim that $e^{\hbar r}$ satisfies the same relation. To prove this, by Lemma \ref{Lem:evaluationatgroup}, we only need to evaluate $e^{\hbar r}$ at $e^{\hbar X}$ for all $X\in \fh$. 

Note that $r$ satisfies $\hbar r(1\otimes X)=X$ by definition. Moreover, for any function $f, g\in S(\fh^*[-2])\lbb\hbar\rbb$, we have:
\be
f\cdot g(e^{\hbar X})=f(e^{\hbar X})g(e^{\hbar X})
\ee
since $\Delta(e^{\hbar X})=e^{\hbar X}\otimes e^{\hbar X}$. Therefore:
\be
e^{\hbar r}(1\otimes e^{\hbar X})=e^{\hbar r(1\otimes e^{\hbar X})}.
\ee
Now we note that $\hbar r(1\otimes e^{\hbar X})=\hbar r(1\otimes \hbar X)=\hbar X$, and therefore the RHS is $e^{\hbar X}$ and the proof is complete. 

\end{proof}

\begin{Rem}
All the quasi-triangular axioms are satisfied because of the properties of taking Drinfeld's double. As an illustration, let us explicitly show that $R\Delta R^{-1}=\Delta^{op}$. This is clear for generators in $\fh$, so let us take $f\in S(\fh^*[-2])\lbb\hbar\rbb$, and let $X, Y\in \fh$. We compute:
\be
R\Delta (f) R^{-1}(e^{\hbar X}, e^{\hbar Y})=R(e^{\hbar Y}) \Delta (f) R^{-1}(e^{\hbar Y}) (e^{\hbar X}, e^{\hbar Y}),
\ee
here we used the fact that $\Delta(e^{\hbar Y})=e^{\hbar Y}\otimes e^{\hbar Y}$, and we evaluate $R$ at $e^{\hbar Y}$ and act on the first factor of $\Delta(f)$ by conjugation. Since $R$ is simply the identity map $U(\fh)\to U(\fh)$, the above is equal to:
\be
(e^{\hbar Y}\otimes 1)\Delta (f)(e^{-\hbar Y}\otimes 1)(e^{\hbar X}, e^{\hbar Y})=\Delta(f)(e^{-\hbar Y}e^{\hbar X}e^{\hbar Y}, e^{\hbar Y}).
\ee
Now using the definition of $\Delta(f)$, we find:
\be
\Delta(f)(e^{-\hbar Y}e^{\hbar X}e^{\hbar Y}, e^{\hbar Y})=f(e^{\hbar Y}e^{-\hbar Y}e^{\hbar X}e^{\hbar Y})=f(e^{\hbar X}e^{\hbar Y})=\Delta^{op}(f)(e^{\hbar X},e^{\hbar Y}).
\ee

\end{Rem}

The fact that the quantization $\CA^\hbar\cong U(\tgv)\lbb\hbar\rbb$ as an algebra is a good sign, and in fact, it is a sign that this quantization is equivalent to the quantization as a quasi-Hopf algebra \cite[Section 3.2]{etingof1996quantization}. We will turn to this next. 

\subsection{Equivalence of the two quantizations}

In this section, we recall the work of Etingof-Kazhdan on canonical quantization of Lie bi-algebras \cite{etingof1996quantization}. In particular, we derive very simply from this work that the two quantizations given in Section \ref{subsec:quant1} and Section \ref{subsec:quant2} are equivalent. We record this in the following theorem.

\begin{Thm}\label{Thm:KLequiv}
There is an equivalence of braided tensor categories:
\be
U(\tgv^\hbar)\Mod\simeq \CA^\hbar\Mod
\ee
Moreover, this equivalence preserves grading and therefore induces an equivalence of the graded categories. The equivalence also intertwines the action of $\fg$ on objects and therefore induces an equivalence of the sub-category where the action of $\fg$ integrates to an algebraic action of $G$. 

\end{Thm}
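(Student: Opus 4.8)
The plan is to deduce the equivalence from the Etingof–Kazhdan canonical quantization functor together with the uniqueness-up-to-twist statement of Drinfeld's construction. First I would recall that Etingof–Kazhdan associate to any (graded) Lie bi-algebra $(\mf a, \delta)$ over $\C\lbb\hbar\rbb$ a quantization $U_{EK}(\mf a)$ as a quasi-triangular Hopf algebra, functorial in $\mf a$, together with a canonical equivalence of braided tensor categories between $U_{EK}(\mf a)\Mod$ and the braided tensor category of Drinfeld–Yetter (or equivalently $U(\mf a)\lbb\hbar\rbb$-quasi-Hopf) modules built from the KZ associator. Applied to $\mf a = \tgv^\hbar$ with co-bracket $\hbar\delta_r$, this already gives an equivalence $U(\tgv^\hbar)\Mod \simeq U_{EK}(\tgv^\hbar)\Mod$ where the left side carries the quasi-Hopf structure of Section \ref{subsec:quant1}. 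So the content is to identify $U_{EK}(\tgv^\hbar)$ with $\CA^\hbar$ as a quasi-triangular Hopf algebra, at least up to a twist that does not change the braided tensor category of modules.

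The key steps, in order: (1) Observe that $\tgv^\hbar = D(\fh^\hbar)$ is a Drinfeld double of the Lie bi-algebra $\fh^\hbar$ (with zero co-bracket, after the $\hbar$-rescaling), and that by Etingof–Kazhdan the quantization functor commutes with the double construction: $U_{EK}(D(\fh^\hbar)) \cong D(U_{EK}(\fh^\hbar))$ as quasi-triangular Hopf algebras (this is \cite[Section 3.2]{etingof1996quantization}, exactly the point flagged in the sentence preceding the theorem). (2) Since $\fh^\hbar$ has trivial co-bracket, its EK quantization is the trivial (undeformed) one: $U_{EK}(\fh^\hbar) \cong U(\fh)\lbb\hbar\rbb$ with standard co-commutative Hopf structure. (3) Therefore $U_{EK}(\tgv^\hbar) \cong D(U(\fh)\lbb\hbar\rbb)$, which is precisely the matched-pair Hopf algebra $\CA^\hbar$ of Section \ref{subsec:quant2}, with the same $R$-matrix $e^{\hbar r}$ computed there. (4) Conclude that the EK equivalence, transported along this identification, is the desired equivalence $U(\tgv^\hbar)\Mod \simeq \CA^\hbar\Mod$ of braided tensor categories. (5) For the grading: all the structure maps (co-products, $R$, $\Phi_{KZ}$, the EK twist) are built from $\hbar$, the $r$-matrix, and $\Omega$, each of which is degree $0$ under the convention $\deg\hbar = -2$; hence the equivalence is compatible with the $\C^\times$-grading and restricts to the graded module categories. (6) For the $G$-equivariance: the adjoint action of $\fg\subseteq\tgv$ is the same before and after quantization (it comes from the primitive elements $x_a$, which are untouched), so "the $\fg$-action integrates to an algebraic $G$-action" is a condition preserved by the equivalence, giving the equivalence of the integrable subcategories.

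I expect the main obstacle to be step (1) — verifying that the EK quantization functor genuinely intertwines the double construction in the graded/$\hbar$-adic setting we need, rather than just for finite-dimensional Lie bi-algebras over a field. Concretely one must check: that $\fh^\hbar$ and $\fh^{*}[-2]^\hbar$ are "compatible" enough (the infinite-dimensionality of $U(\fh)$ is handled by the $\hbar$-adic topology and the $\C^\times$-grading, as in the Lemma after Section \ref{subsec:quant2}); that the EK Hopf pairing between the two halves matches the Hopf pairing $(x_a,\hbar t^b)=\delta_a^b$ used to define $\CA^\hbar$; and that no associator-type anomaly appears, i.e. the double is honestly a Hopf algebra with $\Phi = 1$. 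Granting the general functoriality and double-compatibility of EK (which, as the Remark after Section \ref{subsec:quant2} notes, extends verbatim to the graded case as long as one is careful with signs), the remaining verifications are bookkeeping already essentially carried out in Section \ref{subsubsec:trivcom}. The twist ambiguity in Drinfeld's $\Phi^\hbar$ is harmless: twisting changes $\Phi^\hbar$ and the braided tensor structure by a gauge equivalence, hence induces a (monoidal) auto-equivalence of the module category, so the statement "there exists an equivalence of braided tensor categories" is insensitive to it.
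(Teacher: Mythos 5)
Your proposal follows essentially the same route as the paper: invoke Etingof--Kazhdan to present $U(\tgv)\lbb\hbar\rbb$ (with EK Hopf structure) as the double of the EK quantization $U_\hbar(\fh)$, then identify $U_\hbar(\fh)$ with the undeformed $U(\fh)\lbb\hbar\rbb$ and conclude. The one place where the two diverge is important: your step (2) is precisely the technical heart of the paper's proof, not a routine observation. Saying ``$\fh^\hbar$ has trivial co-bracket, so its EK quantization is trivial'' obscures the issue that $U_\hbar(\fh)$ is constructed inside the genuinely deformed quasi-triangular Hopf algebra $U(\tgv)\lbb\hbar\rbb=\End(F)$, whose Drinfeld twist $J$ (from Proposition \ref{Prop:Jconj}) is a priori nontrivial on $U(\fh)$ even when $\delta_\fh=0$, because $J$ is built from the $r$-matrix of the double $\tgv$, which is never zero. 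What the paper proves (Lemma \ref{Lem:xcommuteJ} and the subsequent corollary) is that $J$ commutes with $\Delta_0(x)$ for $x\in\fh$; this follows from the specific structure that $r=\sum I_a\otimes I^a$ with $I^a\in\fh^*$ and $\fh^*$ an abelian ideal acting trivially on $M_-$ --- a consequence of the vanishing cobracket, but one that requires the explicit chain of computations in the paper. Without this verification, the identification $U_\hbar(\fh)=U(\fh)\lbb\hbar\rbb$ inside the double is unsupported. You also flag step (1) as the main obstacle, but the double compatibility is exactly \cite[Theorem 4.13]{etingof1996quantization}, cited directly by the paper as Theorem \ref{Thm:candouble}, and the graded/$\hbar$-adic extension is already addressed in the remark before Section \ref{subsec:quant1}; the real work is in what you call step (2). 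Steps (5)--(6) on grading and $G$-equivariance are fine and match the paper's closing paragraph.
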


\begin{Rem}
By transportation of structures, the algebra $\CA^\hbar$ has a ribbon element making the category $\CA^\hbar\Mod$ a ribbon category. 

\end{Rem}

\subsubsection{Canonical quantization of Lie bi-algebras}

Let $\fa$ be a Lie bi-algebra and $\fg=D(\fa)$ be its double with a classical $R$ matrix $r$. Let $\Omega=\frac{1}{2}(r+r^{21})$ be the Casimir. From such a double, Drinfeld constructed a canonical quantization of $U(\fg)$ over $\C\lbb\hbar\rbb$ as a quasi-Hopf algebra, whose definition was given in Section \ref{subsec:quant1}. This is a canonical quantization in the sense that its construction depends only on $\fa$ as a Lie bi-algebra. Essentially, the idea of \cite{etingof1996quantization} on the canonical quantization of Lie bi-algebras is to construct a fiber functor on Drinfeld's category, which then results in a quasi-triangular Hopf algebra, canonically quantizing $U(\fg)$ (in fact, isomorphic to $U(\fg)$), such that a quantization of $U(\fa)$ lies as a sub-algebra. Moreover, they show that $U(\fg)$ is a Drinfeld's double. 

More precisely, let $U(\fg)\lbb\hbar\rbb$ be the canonical quasi-triangular quasi-Hopf algebra, and in particular $U(\fg)\lbb\hbar\rbb\Mod$ is a braided tensor category. Let $M_+$ be the module $\mathrm{Ind}_{\fa}^{\fg}(\C)\lbb\hbar\rbb$ and $M_-$ be $\Ind_{\fa^*}^{\fg}(\C)\lbb\hbar\rbb$, which are two Verma modules associated to the two isotropic subalgebras. 

\begin{Thm}[\cite{etingof1996quantization} Section 2 \& 3]\label{Thm:canquant}
Let $F: U(\fg)\lbb\hbar\rbb\Mod\to \C\lbb\hbar\rbb\Mod$ be the functor:
\be
F(M)=\Hom_{U(\fg)\lbb\hbar\rbb}\lp M_+\otimes_{C\lbb\hbar\rbb} M_-, M\rp.
\ee
Then $F$ can be given the structure of an exact tensor functor. Let $H=\mathrm{End}(F)$ be the quasi-triangular Hopf algebra of endomorphisms of $F$. There is an isomorphism of algebras:
\be
U(\fg)\lbb\hbar\rbb\cong \End (F).
\ee

\end{Thm}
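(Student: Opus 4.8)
The plan is to follow the construction of \cite{etingof1996quantization} essentially verbatim, the only new input being the bookkeeping of the $\C^\times$-grading and the $G$-equivariance, which are inherited automatically at every step. First I would recall that since $\fg = D(\fa)$ is a double, Drinfeld's category $\mathcal{M}$ of topologically free $U(\fg)\lbb\hbar\rbb$-modules is an honest braided tensor category with associativity constraint $\Phi^\hbar_{KZ}$ and braiding $e^{\hbar\Omega}\sigma$, as recorded in Section \ref{subsec:quant1}. The two Verma modules $M_\pm = \Ind_{\fa}^{\fg}\C\lbb\hbar\rbb$ and $\Ind_{\fa^*}^{\fg}\C\lbb\hbar\rbb$ are the objects whose tensor product represents $F$. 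The key structural fact one needs is that $M_+\otimes M_-$ carries the structure of a coalgebra object (equivalently, an algebra object in the opposite category) in $\mathcal{M}$: Etingof--Kazhdan produce a canonical morphism $M_+ \otimes M_- \to (M_+\otimes M_-)\otimes(M_+\otimes M_-)$ built from the coevaluation-type maps $\C \to M_+\otimes M_-$ (which exist because $\fa$ and $\fa^*$ are complementary isotropic Lagrangians so that $M_\pm$ are mutually ``dual'' up to the $r$-matrix twist) and the associator. This is precisely where the $\hbar$-adic completion is essential, and where the three hexagon/pentagon identities of $\Phi^\hbar_{KZ}$ get used.

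Next I would define $F(M) = \Hom_{U(\fg)\lbb\hbar\rbb}(M_+\otimes M_-, M)$ and promote it to a tensor functor: the tensor structure $J_{M,N}\colon F(M)\otimes_{\C\lbb\hbar\rbb} F(N)\to F(M\otimes N)$ is assembled from the coalgebra map on $M_+\otimes M_-$ together with the associativity constraint $\Phi^\hbar$, and the compatibility of $J$ with $\Phi^\hbar$ (the cocycle condition making $F$ a tensor functor) is again a consequence of the pentagon. Exactness of $F$ is the assertion that $M_+\otimes M_-$ is a projective generator in an appropriate sense — concretely, that $\Hom(M_+\otimes M_-,-)$ is exact and faithful on topologically free modules — which follows because over $\C\lbb\hbar\rbb$ the module $M_+\otimes M_-$ reduces mod $\hbar$ to $U(\fg)$ itself (this is the content of $\fa, \fa^*$ being a polarization: $\Ind_\fa^\fg\C\otimes\Ind_{\fa^*}^\fg\C \cong U(\fg)$ as $U(\fg)$-modules modulo $\hbar$). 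Then $H := \End(F)$ is automatically a quasi-triangular Hopf algebra by Tannakian reconstruction (the $R$-matrix of $H$ being the image of $e^{\hbar\Omega}$ under the functor), and the isomorphism $U(\fg)\lbb\hbar\rbb \cong \End(F)$ is obtained by observing that $F$ is represented, as a plain functor to $\C\lbb\hbar\rbb\Mod$, by $M_+\otimes M_-$, whose endomorphism algebra is $\End_{U(\fg)\lbb\hbar\rbb}(M_+\otimes M_-)\cong U(\fg)\lbb\hbar\rbb$ since mod $\hbar$ the module is the regular representation and one lifts by $\hbar$-completeness and the graded/filtered structure.

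The grading and $G$-equivariance are then free: all of $M_\pm$, the coalgebra maps, $\Phi^\hbar_{KZ}$, and $e^{\hbar\Omega}$ are built from $\fg$-equivariant data and are homogeneous once $\hbar$ is assigned degree $-2$ (as already checked in Section \ref{subsec:quant1} for $\Phi^\hbar$ and in Section \ref{subsec:quant2} for $R$), so $F$ restricts to the graded and to the $G$-equivariant subcategories and the reconstructed Hopf algebra inherits the grading. The main obstacle — really the only nontrivial point — is verifying that $M_+ \otimes M_-$ genuinely carries the coalgebra structure in Drinfeld's category, i.e. checking the coassociativity against $\Phi^\hbar_{KZ}$; but this is exactly \cite[Part II, \S2]{etingof1996quantization} and, since our $\tgv = D(\fh)$ with $\fh = \fg\ltimes V[-1]$ fits the hypotheses once the Koszul sign rule is kept throughout (as flagged in the Remark following Theorem \ref{Thm:KLequiv}), there is nothing to add beyond citing their argument.
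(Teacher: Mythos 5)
Your proposal is correct and is essentially the route the paper takes: the theorem is cited from Etingof--Kazhdan, and your sketch is a faithful summary of that construction (Drinfeld's category, coalgebra structure on $M_+\otimes M_-$, tensor structure on $F$, Tannakian reconstruction). The paper's own treatment is just the citation plus a one-line remark identifying $M_+\otimes M_-$ with the regular representation.

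One small imprecision worth correcting: you describe the isomorphism $\End_{U(\fg)\lbb\hbar\rbb}(M_+\otimes M_-)\cong U(\fg)\lbb\hbar\rbb$ as coming from a ``mod $\hbar$'' identification followed by lifting. In fact the identification $M_+\otimes_{\C\lbb\hbar\rbb}M_-\cong U(\fg)\lbb\hbar\rbb$ as a \emph{left module} is canonical and exact in $\hbar$, not merely a first-order statement that needs to be deformed upward. The point is that the associator $\Phi^\hbar_{KZ}$ and braiding $e^{\hbar\Omega}\sigma$ of Drinfeld's category only modify the \emph{morphisms} (coherence data), not the underlying $U(\fg)\lbb\hbar\rbb$-module structure of a tensor product, so $M_+\otimes M_-$ has exactly the same module structure it would have in the symmetric monoidal category, and the PBW map $\phi\colon 1\mapsto 1_+\otimes 1_-$ is an isomorphism $U(\fg)\lbb\hbar\rbb\xrightarrow{\sim}M_+\otimes M_-$ on the nose. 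This is the observation the paper records in the remark immediately following the theorem, and it is what makes $F$ visibly exact and representable.
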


\begin{Rem}
The tensor product $M_+\otimes_{C\lbb\hbar\rbb} M_-$ is canonically isomorphic to $U(\fg)\lbb\hbar\rbb$ as a left module, and therefore the above fibre functor is given by taking Hom space with the regular representation. This is how one can identify the endomorphism algebra with $U(\fg)\lbb\hbar\rbb$. 

\end{Rem}

By transportation of relations, there exists a deformed co-product:
\be
\Delta_\hbar: U(\fg)\lbb\hbar\rbb\to U(\fg)\lbb\hbar\rbb\otimes_{C\lbb\hbar\rbb} U(\fg)\lbb\hbar\rbb,
\ee
which is given by a very explicit Drinfeld's twist. 

\begin{Prop}[\cite{etingof1996quantization} Lemma 3.3]\label{Prop:Jconj}
There exists an element $J\in U(\fg)^{\otimes 2}\lbb\hbar\rbb$ such that:
\be
\Delta_\hbar=J^{-1}\Delta_0 J,
\ee
where $\Delta_0$ is the symmetric co-product. More-over, the element $J$ is defined by a function of $\Omega^{ij}$ acting on $1_+\otimes 1_+\otimes 1_-\otimes 1_-$ as an element in $M_+^{\otimes 2}\otimes M_-^{\otimes 2}\cong U(\fg)^{\otimes 2}$. The anti-pode $S_\hbar$ is given by:
\be
S_\hbar(a)=Q^{-1}S(a)Q,
\ee
where $Q=\nabla (S\otimes 1)(J)$, $S$ is the original antipode of $U(\fg)$ and $\nabla$ is the multiplication map. The co-unit is un-deformed $\epsilon_\hbar=\epsilon$. 

\end{Prop}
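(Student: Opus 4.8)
This is Lemma~3.3 of \cite{etingof1996quantization}, so the plan is to recall how that argument goes, paying attention only to the (harmless) changes needed in the graded setting — as remarked after Theorem~\ref{Thm:KLequiv}, it suffices to keep the Koszul sign rule in every braiding and flip. The starting point, already noted in the remark after Theorem~\ref{Thm:canquant}, is that $M_+\otimes_{\C\lbb\hbar\rbb}M_-\cong U(\fg)\lbb\hbar\rbb$ as left modules; hence $F$ is naturally isomorphic, as a plain $\C\lbb\hbar\rbb$-linear functor, to the forgetful functor $\omega$, and in particular $\End(F)\cong U(\fg)\lbb\hbar\rbb$ as an algebra (Theorem~\ref{Thm:canquant}). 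By Tannakian reconstruction for $U(\fg)\lbb\hbar\rbb$ acting on finite-flat modules, natural transformations $\omega\Rightarrow\omega$ are classified by elements of $U(\fg)\lbb\hbar\rbb$ and $\omega^{\otimes2}\Rightarrow\omega^{\otimes2}$ by elements of $(U(\fg)\lbb\hbar\rbb)^{\otimes2}$; this is the mechanism that will turn the tensor structure of $F$ into the sought element $J$.

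Next I would recall the monoidal structure of $F$. The natural isomorphism $J_{M,N}\colon F(M)\otimes F(N)\to F(M\otimes N)$ is defined by pre-composing $f\otimes g\colon (M_+\otimes M_-)^{\otimes2}\to M\otimes N$ with a fixed morphism $\beta\colon M_+\otimes M_-\to (M_+\otimes M_-)^{\otimes2}$ of Drinfeld's category, obtained by inserting a copy of $\unit$ in the middle slot and resolving it through $M_-\otimes M_+$; concretely $\beta$ is a composite of the associativity constraints $(\Phi_{KZ}^\hbar)^{\pm1}$ and the braiding $e^{\hbar\Omega}$, which is legitimate because $M_\pm$ are the Verma modules of the two isotropic subalgebras and the relevant $\Hom$-spaces are free of rank one over $\C\lbb\hbar\rbb$. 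I would then verify, using the pentagon for $\Phi_{KZ}^\hbar$ together with the two hexagons relating $\Phi_{KZ}^\hbar$ and $e^{\hbar\Omega}$, that the $J_{M,N}$ are coherent, i.e. that $(F,J)$ is a strong monoidal functor into $\C\lbb\hbar\rbb\Mod$ with its trivial associator — the point being that $\beta$ is built so as to absorb $\Phi_{KZ}^\hbar$. Exactness of $F$ over $\C\lbb\hbar\rbb$ is immediate from the module identification of the first paragraph.

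The conclusion is then transport of structure. The Hopf algebra $\End(F)\cong U(\fg)\lbb\hbar\rbb$ carries the deformed co-product $\Delta_\hbar$ read off from $(F,J)$; under the identification $F(M)\cong M$, the natural isomorphism $J_{M,N}$ becomes left multiplication by a fixed element $J\in (U(\fg)\lbb\hbar\rbb)^{\otimes2}$, and the fact that it intertwines the relevant $U(\fg)\lbb\hbar\rbb$-module structures on source and target is exactly the twist relation $\Delta_\hbar = J^{-1}\Delta_0 J$ (with the side convention of \cite{etingof1996quantization}). Unwinding the definition of $\beta$ shows that this $J$ is nothing but $\beta(1_+\otimes1_-)$ expressed through $M_+^{\otimes2}\otimes M_-^{\otimes2}\cong (U(\fg)\lbb\hbar\rbb)^{\otimes2}$, i.e. a function of the $\Omega^{ij}$ applied to $1_+\otimes1_+\otimes1_-\otimes1_-$, as asserted. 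Evaluating $F$ on the unit object together with its unit constraints gives $\epsilon_\hbar=\epsilon$. The antipode statement is then formal: for any Drinfeld twist $J$ with undeformed co-unit one has $S_\hbar(a)=Q^{-1}S(a)Q$ with $Q=\nabla(S\otimes1)(J)$, which I would either cite or check directly from $\Delta_\hbar=J^{-1}\Delta_0 J$ and the defining property of an antipode.

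I expect the only step that is not pure transport of structure or Tannakian reconstruction — and hence the main obstacle — to be the coherence check in the second paragraph, namely that the morphism $\beta$ assembled from $\Phi_{KZ}^\hbar$ and $e^{\hbar\Omega}$ is compatible with the pentagon and the two hexagons so that $(F,J)$ is genuinely monoidal; this is the technical heart of \cite{etingof1996quantization}. In the graded setting the only additional care needed is that the flips entering these coherence diagrams carry Koszul signs, but since the braiding is $e^{\hbar\Omega}$ composed with the graded flip from the outset, no sign obstruction appears.
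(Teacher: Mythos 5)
The paper does not prove this proposition; it is imported verbatim from \cite[Lemma 3.3]{etingof1996quantization}, and the only ``proof'' in the text is the citation. Your sketch faithfully reconstructs the Etingof--Kazhdan argument that citation refers to---the identification $M_+\otimes M_-\cong U(\fg)\lbb\hbar\rbb$ making $F$ a lift of the forgetful functor, the tensor structure on $F$ built from $\Phi^\hbar_{KZ}$, $e^{\hbar\Omega}$ and the Verma coevaluation (the paper later records exactly this composite as the element $\wt J$ in Lemma~\ref{Lem:xcommuteJ}), coherence via the pentagon and hexagons, Tannakian transport of structure to obtain $J$ and $\Delta_\hbar=J^{-1}\Delta_0 J$, and the standard Drinfeld-twist formula for $S_\hbar$---together with the correct observation that the graded case only adds Koszul signs to the flips.
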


Moreover, one can construct a pair of Hopf sub-algebras $U_\hbar (\fa), U_\hbar (\fa^*)$ of $U(\fg)\lbb\hbar\rbb$ using the modules $M_\pm$, which canonically quantizes $U (\fa)$ and $U (\fa^*)$. These appear as dual Hopf algebras inside $U(\fg)\lbb\hbar\rbb$, making it a Drinfeld double of either of the two Hopf subalgebras. 

\begin{Thm}[\cite{etingof1996quantization} Theorem 4.13]\label{Thm:candouble}
There are Hopf sub-algebras $U_\hbar (\fa), U_\hbar (\fa^*)$ of $U(\fg)\lbb\hbar\rbb$ quantizing the corresponding universal enveloping algebras. Moreover, the quasi-triangular Hopf algebra $U(\fg)\lbb\hbar\rbb$ is isomorphic to the double of $U_\hbar (\fa)$, such that the $R$ matrix coming from Theorem \ref{Thm:canquant} is identified with the $R$ matrix of the double. The Hopf algebras $U_\hbar(\fa), U_\hbar (\fa^*)$ and $U(\fg)\lbb\hbar\rbb$ quantizes the Lie bi-algebra structures on the Manin triple $(\fg, \fa, \fa^*)$. 

\end{Thm}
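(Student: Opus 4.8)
The plan is to realize $U_\hbar(\fa)$ and $U_\hbar(\fa^*)$ concretely inside $H:=\End(F)$ as the two sub-bialgebras singled out by the Verma modules $M_\pm$, to show that $H$ together with these two subalgebras is a Drinfeld double, and to deduce the remaining assertions, the Manin-triple statement being a first-order check at the end. Throughout I use Theorem \ref{Thm:canquant} and Proposition \ref{Prop:Jconj}: $H\cong U(\fg)\lbb\hbar\rbb$ as an algebra, carrying the twisted coproduct $\Delta_\hbar=J^{-1}\Delta_0 J$, the antipode $S_\hbar(a)=Q^{-1}S(a)Q$, the undeformed counit $\epsilon_\hbar=\epsilon$, and an $R$-matrix $R_\hbar$; and $F$ promotes to a braided tensor equivalence of Drinfeld's category $U(\fg)\lbb\hbar\rbb\Mod$ with $H\Mod$ which is the identity on underlying $\C\lbb\hbar\rbb$-modules.

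First I would establish that the Vermas are coalgebra objects. The canonical surjections $U(\fg)\lbb\hbar\rbb\to M_\pm$ are morphisms of coalgebras, because $U(\fg)U(\fa)^+$ and $U(\fg)U(\fa^*)^+$ are coideals ($\fa$ and $\fa^*$ being spanned by primitive elements); hence $M_+$ and $M_-$ are coassociative, cocommutative coalgebra objects of $U(\fg)\lbb\hbar\rbb\Mod$ with $1_\pm$ group-like. Coassociativity and cocommutativity in the braided category are automatic: the Casimir $\Omega=\tfrac12(r+r^{21})$, hence Drinfeld's associator and the braiding, act as the identity on all $1_\pm^{\otimes n}$ (because $\fa$ kills $1_+$ and $\fa^*$ kills $1_-$), and a $\fg$-equivariant map out of the cyclic module $M_\pm$ is determined by its value on $1_\pm$. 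Transporting through the tensor functor $F$, the $H$-modules $F(M_\pm)\cong M_\pm$ become $H$-module coalgebras that are cyclic quotients of the regular coalgebra $(H,\Delta_\hbar)$; concretely $F(M_+)=H/I_+$ with $I_+=H\cdot U(\fa)^+\lbb\hbar\rbb$ and $F(M_-)=H/I_-$ with $I_-=H\cdot U(\fa^*)^+\lbb\hbar\rbb$, so that these two left ideals are coideals of $(H,\Delta_\hbar)$.

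Next, a left ideal of a bialgebra which is a coideal inside the augmentation ideal has the form $H\cdot\mathfrak k^+$ for a unique topologically free left coideal subalgebra $\mathfrak k$; define $U_\hbar(\fa)$ and $U_\hbar(\fa^*)$ to be the subalgebras produced this way from $I_+$ and $I_-$. Since $\Delta_\hbar\equiv\Delta_0\bmod\hbar$, both reduce modulo $\hbar$ to the standard subalgebras $U(\fa),U(\fa^*)\subseteq U(\fg)$, and a PBW-filtration argument shows the deformations exist and are pinned down order by order. Now the canonical isomorphism $M_+\otimes M_-\cong U(\fg)\lbb\hbar\rbb$ of left modules (Theorem \ref{Thm:canquant}) restricts to a linear isomorphism $U_\hbar(\fa)\otimes U_\hbar(\fa^*)\xrightarrow{\sim}H$ deforming the triangular decomposition $U(\fg)=U(\fa)\otimes U(\fa^*)$; combined with the coideal structures above, this forces the commutation relations between the two factors of $H$ to be precisely the cross-relations of the Drinfeld double of $U_\hbar(\fa)$ for a Hopf pairing $U_\hbar(\fa)\otimes U_\hbar(\fa^*)\to\C\lbb\hbar\rbb$ quantizing the Manin pairing — they coincide with those of $U(\fg)=D(U(\fa))$ modulo $\hbar$, and an order-by-order induction (controlling the $\hbar$-adic sums by the $\C^\times$-grading exactly as in Section \ref{subsubsec:trivcom}) identifies them with the double's relations and at the same time identifies $R_\hbar$ with the canonical $R$-matrix of the double, both being the unique element of the relevant completion of $U_\hbar(\fa)\otimes U_\hbar(\fa^*)$ that represents the identity, as for the formula $R=\exp(\hbar r)$ in Section \ref{subsec:quant2}. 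Thus $H\cong D(U_\hbar(\fa))$ as quasi-triangular Hopf algebras, $U_\hbar(\fa)$ and $U_\hbar(\fa^*)$ are its tautological sub-Hopf-algebras, and reading off the $\hbar^1$-coefficient of $\Delta_\hbar-\Delta_\hbar^{\mathrm{op}}$ on each factor reproduces the cobracket $\delta_r$, so that $(H,U_\hbar(\fa),U_\hbar(\fa^*))$ quantizes the Manin triple $(\fg,\fa,\fa^*)$.

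The main obstacle is the identification $H\cong D(U_\hbar(\fa))$ with its $R$-matrix — the second half of the previous paragraph. It cannot be obtained by abstract Tannakian reconstruction from the single fiber functor $F$: one genuinely needs that $F$ is assembled from the \emph{two} Verma modules $M_\pm$ attached to the isotropic subalgebras of the Manin triple, and that the monoidal structure $J$ of $F$ is the specific one of \cite{etingof1996quantization} (built from the operators $\Omega^{ij}$ acting on $1_+\otimes 1_+\otimes 1_-\otimes 1_-$), so that $J$ respects the triangular decomposition and its restrictions to the monoidal subcategories generated by $M_+$ and by $M_-$ recover exactly $U_\hbar(\fa)$, $U_\hbar(\fa^*)$ and their pairing. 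A secondary technical point — already handled in this paper's setting in Section \ref{subsubsec:trivcom} — is to make the order-by-order matchings rigorous in the $\hbar$-adic topology by means of the $\C^\times$-grading.
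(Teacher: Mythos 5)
This statement is quoted verbatim from \cite{etingof1996quantization} (their Theorem 4.13); the paper offers no proof of its own, so the only thing to compare your proposal against is the Etingof--Kazhdan argument itself. Your outline does follow their route in broad strokes: realize the two subalgebras inside $\End(F)$ via the Verma modules $M_\pm$, use the coalgebra structure of $M_\pm$ (which, as you correctly note, is well defined because $\Omega$ and hence the associator and braiding act trivially on tensor powers of the highest-weight vectors), exploit the identification $M_+\otimes M_-\cong U(\fg)\lbb\hbar\rbb$ as a deformed triangular decomposition, and identify $R_\hbar$ with the canonical element of the pairing.

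However, two steps that you state as if they were routine are in fact the entire content of Section 4 of \cite{etingof1996quantization}. First, ``a left ideal which is a coideal has the form $H\cdot\mathfrak{k}^+$ for a unique topologically free left coideal subalgebra $\mathfrak{k}$'' is not a general fact about bialgebras (it requires faithful flatness/cleftness hypotheses), and it is not how Etingof--Kazhdan proceed: they construct $U_\hbar(\fa)$ and $U_\hbar(\fa^*)$ directly as images of explicit maps $F(M_\pm)\to H$ built from the comultiplications of $M_\pm$, and then verify these images are Hopf subalgebras. Second, and more seriously, the claim that the triangular decomposition ``forces the commutation relations between the two factors to be precisely the cross-relations of the Drinfeld double'' does not follow from a vector-space decomposition plus coideal structure plus a mod-$\hbar$ check; a first-order agreement does not propagate to all orders without an actual mechanism. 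The mechanism in \cite{etingof1996quantization} is the $R$-matrix: one must first prove that $R_\hbar$ lies in (a completion of) $U_\hbar(\fa)\otimes U_\hbar(\fa^*)$, that it induces a nondegenerate Hopf pairing between the two subalgebras, and only then do the quasi-triangularity identities $(\Delta_\hbar\otimes 1)(R_\hbar)=R_\hbar^{13}R_\hbar^{23}$, $(1\otimes\Delta_\hbar)(R_\hbar)=R_\hbar^{13}R_\hbar^{12}$ and $R_\hbar\Delta_\hbar R_\hbar^{-1}=\Delta_\hbar^{op}$ yield the double's cross-relations. You flag this as ``the main obstacle'' but then do not supply the argument, so as written the proposal is an accurate table of contents for the Etingof--Kazhdan proof rather than a proof. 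For the purposes of this paper the honest course is to keep the statement as an imported result, which is what the author does.
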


\subsubsection{Application to $\tgv$ and the equivalence}

We now apply this canonical quantization scheme to $\fa=\fh$ with trivial Lie co-algebra structure. Therefore, we obtain $U(\tgv)\lbb\hbar\rbb$ with a quasi-triangular Hopf algebra structure and is a double. By Theorem \ref{Thm:candouble}, to prove Theorem \ref{Thm:KLequiv}, we only need to show that $U_\hbar(\fh)=U(\fh)\lbb\hbar\rbb$ in our case. 

To do so, we must carefully examine the element $J$. Let $M_\pm$ be as above, we have natural identification:
\be
M_+\cong U(\fh^*)\lbb\hbar\rbb \qquad M_-\cong U(\fh)\lbb\hbar\rbb.
\ee
Denote by $1_\pm$ the unit in $M_\pm$, which is the generating element of the modules. As a left module of $U(\fg)$, the tensor product $M_+\otimes M_-$ can be naturally identified with $U(\fg)$ via the map:
\be
\phi: 1\to 1_+\otimes 1_-.
\ee
Denote by $\wt J$ the following map of $U(\tgv)$ modules:
\be
\wt J=\Phi_{1,2, 34}^{-1}(1\otimes \Phi_{2,3,4}) \tau e^{\hbar \Omega_{2,3}} (1\otimes \Phi_{2,3,4}^{-1})   \Phi_{1,2,34}
\ee
which is a map of $U(\tgv)$ modules $M_+^{\otimes 2}\otimes M_-^{\otimes 2}\to (M_+\otimes M_-)\otimes (M_+\otimes M_-)$. The element $J$ is given by:
\be
J=(\phi^{-1}\otimes \phi^{-1})\circ \wt J \lp 1_+\otimes 1_+\otimes 1_-\otimes 1_-\rp.
\ee
Note that $\wt J$ as an isomorphism of modules is built from an element in $U(\tgv)^{\otimes 4}\lbb\hbar\rbb$, and we will denote by $\wt J$ the same element. 

\begin{Lem}\label{Lem:xcommuteJ}
Write $\wt J$ as $\sum J^{(1)}\otimes J^{(2)}\otimes J^{(3)}\otimes J^{(4)}$, then for any $x\in \fh$, we have:
\be\label{eq:xJcommute}
0=[\Delta^4_0(x), \wt J 1_+\otimes 1_+\otimes 1_-\otimes 1_-],
\ee
Here $\Delta_0^4(x)=x^{(1)}\otimes x^{(2)}\otimes x^{(3)}\otimes x^{(x)}$, and the commutator is computed via identifications $M_+\cong U(\fh^*)\lbb\hbar\rbb$ and $M_-\cong U(\fh)\lbb\hbar\rbb$. 

\end{Lem}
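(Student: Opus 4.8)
The plan is to read the asserted identity inside $U(\tgv)^{\otimes 4}\lbb\hbar\rbb$: via $M_+\cong U(\fh^*)\lbb\hbar\rbb$ and $M_-\cong U(\fh)\lbb\hbar\rbb$ we regard $\wt J(1_+\otimes 1_+\otimes 1_-\otimes 1_-)$ as an element of $U(\fh^*)\otimes U(\fh)\otimes U(\fh^*)\otimes U(\fh)\subseteq U(\tgv)^{\otimes 4}$, and we must show it commutes with $\Delta_0^4(x)$ for every $x\in\fh$, i.e. is invariant under the diagonal adjoint action of $\fh$. The one structural fact that does the work is that $\fh^*$ is an \emph{abelian ideal} of $\tgv$. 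Hence $\fh^*$ acts by zero on $M_-=\Ind_{\fh^*}^{\tgv}\C$, so that $M_-\cong U(\fh)$ as an $\fh$-module with $\fh$ acting by left multiplication, while dually $\fh$ annihilates the cyclic vector $1_+\in M_+$.

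The first step is to record what $\fh^*\cdot M_-=0$ forces on the operators building $\wt J$. Since $\Omega=\tfrac12(r+r^{21})$ is a sum of terms $x_k\otimes t^k$ and $t^k\otimes x_k$ with $x_k\in\fh$, $t^k\in\fh^*$, any $\Omega^{ij}$ one of whose two legs lands on an $M_-$-slot loses every term in which that leg is a $t^k$; what remains acts on the $M_-$-slot by left multiplication by an element of $\fh$, and on a pair of $M_-$-slots $\Omega^{ij}$ vanishes identically. Therefore each constituent $\Phi_{1,2,34}^{\pm 1}$, $\Phi_{2,3,4}^{\pm 1}$, $e^{\hbar\Omega_{2,3}}$ of $\wt J$ — a noncommutative power series in such $\Omega^{ij}$'s — commutes with right multiplication by any $y\in\fh$ on every $M_-$-slot it touches, and the flip $\tau$ merely transports such a right multiplication to the slot it moves. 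Tracing the two $M_-$-slots ($3$ and $4$) of the source $M_+^{\otimes 2}\otimes M_-^{\otimes 2}$ through the composition one sees $\tau$ carries slot $3$ to slot $2$ and fixes slot $4$, so $\wt J$ intertwines right multiplication by $x$ on slots $\{3,4\}$ of the source with right multiplication by $x$ on slots $\{2,4\}$ of the target $M_+\otimes M_-\otimes M_+\otimes M_-$ — both being exactly the $M_-$-slots.

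Second, I would compare the adjoint and module actions of $\fh$ on the target $M_+\otimes M_-\otimes M_+\otimes M_-\subseteq U(\tgv)^{\otimes 4}$. On an $M_+$-slot $\cong U(\fh^*)$ the $\fh$-action is the restriction of $\mathrm{ad}_x$ (because $[\fh,\fh^*]\subseteq\fh^*$), hence coincides with the $\tgv$-module action; on an $M_-$-slot $\cong U(\fh)$ the module action is left multiplication while $\mathrm{ad}_x=L_x-(\pm)R_x$. Writing $\rho$ for the $\tgv$-module action and $R_x^{(j)}$ for (signed) right multiplication by $x$ on the $j$-th slot, this gives
\be
\big[\Delta_0^4(x),-\big]\;=\;\rho\big(\Delta_0^4(x)\big)\;-\;\big(R_x^{(2)}+R_x^{(4)}\big)
\ee
on the target, slots $2$ and $4$ being the $M_-$-slots (signs for odd $x$ suppressed). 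Apply this to $\wt J\,v$ with $v=1_+\otimes 1_+\otimes 1_-\otimes 1_-$: since $\wt J$ is a morphism of $\tgv$-modules (recorded in the excerpt) the term $\rho(\Delta_0^4(x))$ passes through $\wt J$; on $v$ the module action of $x\in\fh$ kills the two $M_+$-slots and on each $M_-$-slot sends $1_-\mapsto x=1_-\!\cdot x$, i.e. there it agrees with the right multiplication; and by the intertwining relation of the first step $\wt J$ carries those source right multiplications to the target ones in slots $\{2,4\}$. Thus $\rho(\Delta_0^4(x))(\wt J v)=(R_x^{(2)}+R_x^{(4)})(\wt J v)$, and the displayed identity yields $[\Delta_0^4(x),\wt J v]=0$.

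The main obstacle is not conceptual but combinatorial: one must verify, slot by slot and on both sides of $\tau$, that each of $\Phi_{1,2,34}^{\pm1}$, $\Phi_{2,3,4}^{\pm1}$, $e^{\hbar\Omega_{2,3}}$, $\tau$ really does act on every $M_-$-slot it meets only by left multiplications — in particular for the factors occurring \emph{after} $\tau$, where a formerly $M_+$-typed slot has become $M_-$-typed — track exactly which $M_-$-slot of the source ends up where in the target, and propagate the graded (Koszul) signs through $\mathrm{ad}_x=L_x-(\pm)R_x$, through $\tau$, and through the expansion of $\rho(\Delta_0^4(x))v$ consistently enough that the final cancellation is sign-exact. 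Everything else is formal.
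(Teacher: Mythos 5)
Your proof is correct and follows a genuinely different route from the paper's. The paper argues inductively: it records that $\wt J(1_+^{\otimes 2}\otimes 1_-^{\otimes 2})$ is obtained from the obviously $\Delta_0^4(x)$-invariant vector $1_+^{\otimes 2}\otimes 1_-^{\otimes 2}$ by consecutive applications of $r^{ij}$ (and the flip), and shows that each such application preserves the commutant, using $[\Delta_0(x),r]=0$ together with the graded Leibniz/Jacobi identity, with two subcases depending on whether $r^{ij}$ lands on $M_-\otimes M_+$ or $M_+\otimes M_+$ (the $M_-\otimes M_-$ case being zero because $\fh^*$ kills $M_-$). Your argument is structural rather than inductive: you never look at how the individual $\Omega^{ij}$'s interact with $\mathrm{ad}_{\Delta_0(x)}$; instead you (i) observe that every constituent of $\wt J$ acts on $M_-$-typed slots only by left multiplications by elements of $\fh$ (again because $\fh^*$ kills $M_-$), so $\wt J$ intertwines $R_x^{(3)}+R_x^{(4)}$ on the source with $R_x^{(2)}+R_x^{(4)}$ on the target, (ii) decompose $\mathrm{ad}_{\Delta_0^4(x)}$ on the target as $\rho(\Delta_0^4(x))-(R_x^{(2)}+R_x^{(4)})$, and (iii) invoke the $\tgv$-equivariance of $\wt J$ (already recorded in the text) plus $\rho(\Delta_0^4(x))v=(R_x^{(3)}+R_x^{(4)})v$ on the highest-weight vector to collapse everything. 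The trade-off: the paper's induction is self-contained at the level of $r$ but must separately handle the $M_+\otimes M_+$ case via a Jacobi-type identity whose exact form (one leg acts by commutator, the other by left multiplication) is a bit delicate; your route outsources the invariance entirely to the fact that $\wt J$ is a module map, and the only slot-by-slot analysis needed is the much cleaner observation that $\Omega^{ij}$ restricted to a configuration with at least one $M_-$ leg is a pure left multiplication there. Both rely crucially on $\fh^*$ being an ideal acting trivially on $M_-$; you should just keep the Koszul signs honest in the final bookkeeping, as you note.
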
 

\begin{proof}

The element $\wt J$ is built from elements of the form $\hbar r^{i,j}$ for $i\ne j$ acting consecutively on $1_+\otimes 1_+\otimes 1_-\otimes 1_-$, and therefore, we only need to show that if we start with an element satisfying equation \eqref{eq:xJcommute}, then any application of $r^{i,j}$ on such an element still satisfies the same equation. Note that $r=\sum I_a\otimes I^a$, and note that because $\fh^*$ form an ideal in $\tgv$, the action of $I^a$ on $M_-$ is identically zero. We are left to consider the following two situations. 

First, assume we have an element $c^{(1)}1_-\otimes c^{(2)}1_+$ satisfying:
\be
[\Delta_0 (x), c^{(1)}1_-\otimes c^{(2)}1_+]=0,
\ee
then the element $r (c^{(1)}1_- \otimes c^{(2)}1_+)$ satisfies this equation as well. This follows from the fact that $[\Delta_0(x), r]=0$, and that $[\Delta_0(x), rc]=[\Delta_0(x), r]c+r[\Delta_0(x), c]$. 

Second, assume we have an element $c^{(1)}1_+\otimes c^{(2)}1_+$ satisfying:
\be
[\Delta_0 x, c^{(1)}1_+\otimes c^{(2)}1_+]=0,
\ee
we need to show that $r (c^{(1)}1_+ \otimes c^{(2)}1_+)$ satisfies this equation as well. Note that the difference from the first scenario is that the action of $I_a$ is not given by left multiplication but rather given by commutator. However, the same argument applies, since we have Jacobi identity $[\Delta_0(x), [r, c]]=[[\Delta_0(x), r], c]+[r, [\Delta_0(x), c]]$. 
 
\end{proof}

We derive from this the following:

\begin{Cor}
For any $x\in \fh$, let $J$ be the element supplied by Proposition \ref{Prop:Jconj}, then:
\be
J^{-1}\Delta_0(x)J=\Delta_0(x).
\ee
Consequently, $S_\hbar(x)=-x$. 

\end{Cor}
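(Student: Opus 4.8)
The plan is to deduce both assertions from Lemma~\ref{Lem:xcommuteJ} by a short formal manipulation, once the shape of $J$ has been made explicit. First I would unwind the construction of $J$. Under the identifications $M_+\cong U(\fh^*)\lbb\hbar\rbb$ and $M_-\cong U(\fh)\lbb\hbar\rbb$, the isomorphism $\phi\colon U(\tgv)\lbb\hbar\rbb\to M_+\otimes M_-$, $a\mapsto a\cdot(1_+\otimes 1_-)$, is nothing but the inverse of the multiplication map $U(\fh^*)\lbb\hbar\rbb\otimes U(\fh)\lbb\hbar\rbb\to U(\tgv)\lbb\hbar\rbb$ (an isomorphism of graded vector spaces by PBW); this is a one-line check using that $\fh$ annihilates $1_+$, that $\fh^*$ annihilates all of $M_-$, and that $\fh^*$ acts by left multiplication on $M_+\cong U(\fh^*)$. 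Consequently, writing $w:=\wt J\cdot(1_+\otimes 1_+\otimes 1_-\otimes 1_-)$ and regarding it, via the above identifications (and taking into account the permutation $\tau$ built into $\wt J$), as an element of $U(\fh^*)^{\otimes 2}\otimes U(\fh)^{\otimes 2}\subset U(\tgv)^{\otimes 4}\lbb\hbar\rbb$, we have $J=m(w)$, where $m\colon U(\tgv)^{\otimes 4}\to U(\tgv)^{\otimes 2}$ is the map that multiplies together the two pairs of tensor factors singled out by $\phi^{-1}\otimes\phi^{-1}$.

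The crucial step is then the purely formal identity
\be
[\Delta_0(x),\,m(w)]\;=\;m\bigl([\Delta_0^4(x),\,w]\bigr),
\ee
valid for every homogeneous $x\in\tgv$ and every $w\in U(\tgv)^{\otimes 4}\lbb\hbar\rbb$: indeed $\Delta_0^4(x)$ is the sum of $x$ inserted in each of the four slots, $m$ is assembled from the multiplication map, and for a product one has the graded Leibniz rule $[x,ab]=[x,a]b+(-1)^{|x||a|}a[x,b]$, with the Koszul signs matching because $m$ multiplies the paired factors. Applying Lemma~\ref{Lem:xcommuteJ}, which is precisely the statement $[\Delta_0^4(x),w]=0$ for $x\in\fh$, we obtain $[\Delta_0(x),J]=0$, i.e. $\Delta_0(x)J=J\Delta_0(x)$, which is the asserted $J^{-1}\Delta_0(x)J=\Delta_0(x)$.

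For the antipode statement I would use Proposition~\ref{Prop:Jconj}: $S_\hbar(a)=Q^{-1}S(a)Q$ with $Q=\nabla(S\otimes 1)(J)$, where $S$ is the undeformed antipode, so $S(x)=-x$ for $x\in\tgv$ (every element of $\tgv$ is primitive). Apply $\nabla(S\otimes 1)$ to both sides of $\Delta_0(x)J=J\Delta_0(x)$. Writing $J=\sum J^{(1)}\otimes J^{(2)}$, using that $S$ is a graded anti-automorphism and that $J$ is homogeneous of degree $0$, the left-hand side becomes $0$ (the two contributions coming from the copy of $x$ inside $J$ cancel after $S(xa)=(-1)^{|x||a|}S(a)S(x)$), while the right-hand side becomes $Qx-xQ$. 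Hence $[x,Q]=0$, and therefore $S_\hbar(x)=Q^{-1}S(x)Q=Q^{-1}(-x)Q=-x$.

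The genuine content of the corollary is entirely contained in Lemma~\ref{Lem:xcommuteJ}; the rest is bookkeeping, and the only places to be careful are exactly there: one must (a) identify $\phi^{-1}$ with the multiplication map in the right factor order, (b) keep the permutation $\tau$ inside $\wt J$ straight so that $m$ pairs the correct factors, and (c) read the bracket in Lemma~\ref{Lem:xcommuteJ} as the commutator in the associative algebra $U(\tgv)^{\otimes 4}\lbb\hbar\rbb$ under $M_\pm\cong U(\fh^\mp)$ --- this is not the same as the module action of $\Delta_0^4(x)$ on $w$, and it is the commutator version that is compatible with the Leibniz identity above. I expect step (c), disentangling these two a priori different meanings of the bracket, to be the main conceptual obstacle; everything else is mechanical.
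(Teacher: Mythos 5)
Your proof is correct, and for the first assertion it is essentially the paper's argument, reorganized: both hinge on Lemma~\ref{Lem:xcommuteJ} together with the graded Leibniz rule, once one records that the identification of $M_+\otimes M_-$ with $U(\tgv)$ via $\phi^{-1}$ is multiplication under the PBW decomposition $U(\tgv)\cong U(\fh^*)\otimes U(\fh)$. You package the bookkeeping as the formal identity $[\Delta_0(x),m(w)]=m([\Delta_0^4(x),w])$, which is a clean way to say what the paper does term-by-term, namely convert the module action (conjugation on the $M_+$ slots, left multiplication on the $M_-$ slots) into conjugation-plus-right-multiplication and let the lemma kill the conjugation part. Your point (c) about carefully distinguishing the module action of $\Delta_0^4(x)$ from the commutator in $U(\tgv)^{\otimes 4}$ is exactly the right thing to worry about, and is precisely what the paper navigates.

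For the antipode, however, you take a genuinely different and noticeably heavier route. You apply $\nabla(S\otimes 1)$ to the identity $\Delta_0(x)J=J\Delta_0(x)$, extract $[x,Q]=0$, and then conjugate. This is correct, but the paper's argument is a one-liner: since $\Delta_\hbar(x)=\Delta_0(x)$, the Hopf antipode axiom $\nabla(S_\hbar\otimes 1)\Delta_\hbar(x)=\epsilon(x)\cdot 1=0$ gives $S_\hbar(x)+x=0$ directly, with no reference to $Q$ or $J$ at all. Your derivation of $[x,Q]=0$ is not wrong and is a mildly interesting fact in its own right, but it re-proves something that the antipode axiom already gives for free once the coproduct statement is in hand. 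If one wanted the minimal proof, the Hopf axiom is the way to go; if one specifically wanted the statement $[x,Q]=0$ (which can be useful elsewhere), your computation supplies it.
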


\begin{proof}

We need to show that:
\be
\Delta_0(x)J=J\Delta_0(x).
\ee
Let us consider the LHS. Since $\phi$ is a morphism of $\tgv$ modules, we have:
\be
\Delta_0(x)J=(\phi^{-1}\otimes \phi^{-1}) \lp x\wt{J} 1_+\otimes 1_+\otimes 1_-\otimes 1_-\rp,
\ee
We have by definition the following equality:
\be
\begin{aligned}
\lp x\wt{J} 1_+\otimes 1_+\otimes 1_-\otimes 1_-\rp &= [x, J^{(1)}1_+]\otimes J^{(2)}1_+\otimes J^{(3)}1_-\otimes J^{(4)}1_-\\ &+(-1)^{|J^{(1)}||x|}J^{(1)}1_+\otimes [x,J^{(2)}1_+]\otimes J^{(3)}1_-\otimes J^{(4)}1_-
\\ &+(-1)^{(|J^{(1)}|+|J^{(2)}|)|x|}J^{(1)}1_+\otimes J^{(2)}1_+\otimes xJ^{(3)}1_-\otimes J^{(4)}1_-\\ &+(-1)^{(|J^{(1)}|+|J^{(2)}|+|J^{(3)}|)|x|}J^{(1)}1_+\otimes J^{(2)}1_+\otimes J^{(3)}1_-\otimes xJ^{(4)}1_-.
\end{aligned}
\ee
Note that the difference between this and equation \eqref{eq:xJcommute} lies in the fact that the action of $x$ on $M_-$ is left multiplication rather than conjugation. However, using Lemma \ref{Lem:xcommuteJ}, the above is equal to:
\be
\begin{aligned}
    &(-1)^{|J^{(4)}||x|}J^{(1)}1_+\otimes J^{(2)}1_+\otimes (J^{(3)}1_-)x\otimes J^{(4)}1_-\\&+J^{(1)}1_+\otimes J^{(2)}1_+\otimes J^{(3)}1_-\otimes (J^{(4)}1_-)x\\ &= J (1\otimes 1\otimes \Delta_0(x)).
\end{aligned}
\ee
where we use right multiplication of $x$ on $M_-\cong U(\fh)$. The sign here is becasue $\sum_i |J^{(i)}|=0$ as $J$ is even. 

On the other hand, for any $a\in U(\fh^*)$ and $b\in U(\fh)$, we have:
\be
\phi(ab x)=\Delta_0(a)\Delta_0(b)\phi(x)=\Delta_0(a)\Delta_0(b)1_+\otimes x1_-=\Delta_0(a)1_+\otimes bx 1_-=a1_+\otimes bx1_-,
\ee
which is true because the action of $a$ on $M_-$ is identically zero. The above is in fact equal to:
\be
a1_+\otimes (b1_-)x=\phi(ab)x,
\ee
since $b\in U(\fh)$. Since multiplication $U(\fh^*)\otimes U(\fh)\to U(\tgv)$ is an isomorphism, we find that $\phi(ax)=\phi(a)x$ for any $a$. We now have:
\be
(\phi\otimes \phi)\lp J\Delta_0(x)\rp=\wt{J}(1_+\otimes 1_+\otimes 1_-\otimes 1_-)(\Delta_0(x)),
\ee
where the action of $\Delta_0(x)$ is right multiplication on $M_-\otimes M_-$. Therefore, the following is true:
\be
(\phi\otimes \phi)\lp J\Delta_0(x)\rp=(\phi\otimes \phi)\lp \Delta_0(x)J\rp,
\ee
which proves the statement for $\Delta_0(x)$. 

As for anti-pode, this is a simple consequence of the equalities:
\be
\nabla (S_\hbar \otimes 1)\Delta_\hbar(x)=\epsilon(x)=0,\qquad \Delta_\hbar(x)=\Delta_0(x)=x\otimes 1+1\otimes x,
\ee
which implies:
\be
S_\hbar(x)+x=0, \text{ i.e., } S_\hbar(x)=S(x)=-x. 
\ee
\end{proof}

Therefore, the ordinary embedding $U(\fh)\to U(\tgv)\lbb\hbar\rbb$ is an embedding of Hopf algebras. Consequently, by the definition of the embedding $U_\hbar(\fh)\to U(\tgv)\lbb\hbar\rbb$ according to \cite[Section 4]{etingof1996quantization}, we identify $U(\fh)\lbb\hbar\rbb$ with the image of $U_\hbar(\fh)$ as a Hopf algebra. Moreover, $ U(\tgv)\lbb\hbar\rbb$ as a Hopf algebra is isomorphic to the double of $U(\fh)\lbb\hbar\rbb$, namely $\CA^\hbar$. Clearly this equivalence intertwines the action of $\fg$ as $\fg\subseteq \fh$ and commutes with $J$. Since all the constructions in question respects the grading, this induces an equvialence of the category of graded modules. This finishes the proof of Theorem \ref{Thm:KLequiv}. 

\begin{Cor}

The quasi-triangular Hopf algebra $\CA^\hbar$ admits a ribbon element $\theta$.

\end{Cor}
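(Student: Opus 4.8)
The plan is to exhibit the ribbon element explicitly as $\theta := e^{-\hbar C}\in U(\tgv)\lbb\hbar\rbb=\CA^\hbar$, where $C$ is the quadratic Casimir of the invariant form $\kappa$, mirroring the ribbon twist $e^{-\hbar C}$ already found on $U(\tgv^\hbar)\Mod$ in Section \ref{subsec:quant1}. Two preliminary facts will be used repeatedly. First, $C$ is central in $\CA^\hbar$, since $\CA^\hbar\cong U(\tgv)\lbb\hbar\rbb$ as algebras (Section \ref{subsubsec:trivcom}); hence $\theta$ is central. Second, $C\otimes 1$ and $1\otimes C$ are central in $U(\tgv)\lbb\hbar\rbb^{\otimes 2}$, and $\Omega=\frac{1}{2}(\Delta_0(C)-C\otimes 1-1\otimes C)$ is a difference of elements commuting with them, so $C\otimes 1$, $1\otimes C$ and $\Omega$ pairwise commute; in particular $\theta\otimes\theta$ is central in $U(\tgv)\lbb\hbar\rbb^{\otimes 2}$.

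First I would dispatch the two easy ribbon axioms. Counitality $\epsilon_\hbar(\theta)=1$ follows from $\epsilon_\hbar(C)=0$, since each summand of $C$ is a product of two generators killed by the (standard) counit. For $S_\hbar(\theta)=\theta$ I would combine $S_\hbar(x)=-x$ for $x\in\fh$ (Section \ref{subsec:quant1}), $S_\hbar(t^i)=-t^i$, and the graded anti-automorphism property of $S_\hbar$ to compute term by term that $S_\hbar(C)=C$ (e.g.\ $x_at^a\mapsto t^ax_a$ and $\psi_+^i\psi_{-,i}\mapsto-\psi_{-,i}\psi_+^i$, which reassemble into $C$); since $C$ is central this gives $S_\hbar(\theta)=e^{-\hbar S_\hbar(C)}=\theta$.

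The main step — and the one requiring genuine care — is the comultiplication axiom $\Delta_\hbar(\theta)=(R_{\hbar,21}R_\hbar)^{-1}(\theta\otimes\theta)$. I would prove it by transporting the corresponding identity from the quasi-Hopf side through the Drinfeld twist $J$ of Proposition \ref{Prop:Jconj}. From $\Delta_0(C)=C\otimes 1+1\otimes C+2\Omega$ (equation \eqref{eq:classicalC}) and the commuting factorization, $\Delta_0(\theta)=(\theta\otimes\theta)\,e^{-2\hbar\Omega}$. Twisting, $\Delta_\hbar=J^{-1}\Delta_0J$ and $R_\hbar=J_{21}^{-1}e^{\hbar\Omega}J$ (the standard companion of the twist, consistent with the explicit $R_\hbar=e^{\hbar r}$ of Section \ref{subsubsec:trivcom}), so $R_{\hbar,21}R_\hbar=J^{-1}e^{2\hbar\Omega}J$; using that $\theta\otimes\theta$ commutes with $\Omega$ and with $J$,
\[
(R_{\hbar,21}R_\hbar)^{-1}(\theta\otimes\theta)=J^{-1}e^{-2\hbar\Omega}(\theta\otimes\theta)J=J^{-1}(\theta\otimes\theta)e^{-2\hbar\Omega}J=J^{-1}\Delta_0(\theta)J=\Delta_\hbar(\theta).
\]
The remaining balancing identity $\theta^2=u_\hbar S_\hbar(u_\hbar)$, with $u_\hbar$ the Drinfeld element of $\CA^\hbar$, then follows formally from the four identities above, so $\theta$ is a ribbon element.

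Alternatively, one can bypass the twist computation entirely: by Theorem \ref{Thm:KLequiv} the equivalence $\CA^\hbar\Mod\simeq U(\tgv^\hbar)\Mod$ is an equivalence of braided tensor categories which — being the Etingof--Kazhdan fiber functor, for which $M_+\otimes M_-\cong U(\tgv)\lbb\hbar\rbb$ — is essentially the forgetful functor up to the algebra isomorphism $\CA^\hbar\cong U(\tgv)\lbb\hbar\rbb$, modifying only the monoidal structure via $J$. It thus carries the ribbon twist ``act by $e^{-\hbar C}$'' of $U(\tgv^\hbar)\Mod$ to the natural automorphism of $\mathrm{id}_{\CA^\hbar\Mod}$ given by the same central element, and since $\CA^\hbar$ is a genuine quasi-triangular Hopf algebra this ribbon structure on its module category is exactly the datum of a ribbon element, necessarily $\theta=e^{-\hbar C}$. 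The only subtlety in this route is that the module category consists of $\C\lbb\hbar\rbb$-finite-flat modules, hence does not contain the regular representation; this is harmless because $e^{-\hbar C}$ is already an honest ($\hbar$-adically convergent, $\C^\times$-weight-zero) element of $U(\tgv)\lbb\hbar\rbb$, so multiplication by it on each object realizes the transported twist. Either way, tracking the twist $J$ (respectively, the bookkeeping in the reconstruction) is where the real work lies.
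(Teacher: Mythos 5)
Your main route reproduces the paper's structure (exhibit $\theta=e^{-\hbar C}$, verify centrality, $\epsilon_\hbar(\theta)=1$, $S_\hbar(\theta)=\theta$, and $\Delta_\hbar(\theta)=(R_{21}R)^{-1}(\theta\otimes\theta)$ via the twist $J$, using $R=J_{21}^{-1}e^{\hbar\Omega}J$ from Etingof--Kazhdan), but the last step has a genuine gap. You assert that the balancing axiom $\theta^2=u_\hbar S_\hbar(u_\hbar)$ ``then follows formally from the four identities above.'' It does not. From those four identities one only concludes that $g:=\theta^2\,(u_\hbar S_\hbar(u_\hbar))^{-1}$ is a central grouplike element with $S_\hbar(g)=g$, hence $g^2=1$ and $\epsilon_\hbar(g)=1$; nothing formal forces $g=1$. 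The conclusion does hold here, but for a non-formal reason: $g\equiv 1\ (\mathrm{mod}\ \hbar)$ because $U(\tgv)$ has no nontrivial grouplikes, and then $g^2=1$ with $g=1+\hbar z$ forces $z=0$ in the $\hbar$-adic completion. If you want to claim the balancing comes for free, you must make that $\hbar$-adic argument explicit. The paper instead verifies the balancing directly and cleanly: since $R=e^{\hbar r}$ with $r$ the classical $r$-matrix, one has $u_\hbar=e^{-\hbar\nabla(r)}$ and $S_\hbar(u_\hbar)=e^{-\hbar\nabla(r^{\mathrm{op}})}$, and the identities $\nabla(r)+\nabla(r^{\mathrm{op}})=2C$ together with $[\nabla(r),\nabla(r^{\mathrm{op}})]=0$ give $u_\hbar S_\hbar(u_\hbar)=e^{-2\hbar C}=\theta^2$ immediately. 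That computation is the actual content of the step you skipped, and it is short; you should include it (or the $\hbar$-adic grouplike argument) rather than declare the balancing automatic. Your alternative categorical route has the same issue in disguise: transporting a ribbon twist through the Etingof--Kazhdan equivalence and then reading off a ribbon element requires knowing that a ribbon structure on $\CA^\hbar\Mod$ (including its compatibility with rigid duality, which encodes the balancing) is the datum of a ribbon element of $\CA^\hbar$; that reconstruction in the topological/$\hbar$-adic setting is exactly the kind of bookkeeping you wave at but do not supply.
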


\begin{proof}

Note that by \cite[Proposition 3.7]{etingof1996quantization}, the R-matrix $R$ of $\CA^\hbar$ is alternatively given by: 
\be
R=(J^{op})^{-1}e^{\hbar\Omega} J.
\ee
Therefore we have:
\be
R^{-1}(R^{op})^{-1}=J^{-1}e^{-2\hbar\Omega} J=J^{-1} \Delta_0(e^{-\hbar C})e^{\hbar C}\otimes e^{\hbar C}J=\Delta_\hbar(e^{-\hbar C})e^{-\hbar C}\otimes e^{-\hbar C}.
\ee
Here we used the fact that $C\otimes 1, 1\otimes C$ are central and therefore commute with $J$. We claim that $\theta=e^{-\hbar C}$ is the ribbon element. It is clearly central and satisfies $\Delta_\hbar(\theta)=R^{-1}(R^{op})^{-1} \theta\otimes \theta$, as well as $S(\theta)=\theta$ and $\epsilon(\theta)=1$. We only need to prove the following identity:
\be
\theta^2=uS(u),\qquad u=\nabla(S\otimes 1)(R).
\ee
Here $\nabla$ denote the multiplication map. Note that since $R=e^{\hbar r}$, we have:
\be
uS(u)=e^{ -\hbar\nabla(r)}e^{-\hbar S\nabla(r)}=e^{-\hbar\nabla(r)}e^{-\hbar \nabla(r^{op})}.
\ee
We now use the fact that $\nabla(r)+\nabla(r^{op})=2C$, as well as $[\nabla(r), \nabla(r^{op})]=0$ to conclude that the above is equal to $e^{-\hbar (\nabla(r)+\nabla(r^{op}))}=e^{-2\hbar C}=\theta^2$. This completes the proof.

\end{proof}

\section{Rozansky-Witten geometry of symplectic reduction}\label{sec: RWtangent}

\subsection{Symplectic reduction}

The action of $G$ on $V$ induces an action of $G$ on $T^*V$. The space $T^*V$ is symplectic with its canonical symplectic form. The action of $G$ on $T^*V$ preserves the symplectic form, and there is a moment map:
\be
\mu: T^*V\to \fg^*
\ee
such that:
\be
\mu(v,v^*)(X)=(Xv,v^*),~\text{ for all } X, v, v^*.
\ee
The map $\mu$ is a $G$-equivariant and generically flat, but in general not smooth. The zero fibre $\mu^{-1}(0)$ is a $G$-invariant subspace. Note that when $\mu$ is not flat, we take $\mu^{-1}(0)$ to be the \textbf{derived fiber}, namely the derived (affine) scheme represented by the following Cartesian diagram:
\be
\btik
\mu^{-1}(0)\rar\dar & 0\dar\\
T^*V\rar{\mu} & \fg^*
\etik
\ee 

\begin{Def}\label{Def:HamRed}
The symplectic reduction $\HR$ is defined as the affine scheme:
\be
\HR:=\mu^{-1}(0)/\!/G,
\ee
namely $\HR$ is an affine variety whose space of algebraic functions is the algebra $\C[\mu^{-1}(0)]^G$. We also denote the above by $T^*V\hypquotient G$, called the hyper-K\"{a}hler quotient. 
\end{Def}

\begin{Rem}\label{Rem:Cmu}
The affine scheme $\mu^{-1}(0)$ is represented by the following explicit commutative differential graded algebra. The underlying algebra is the algebra of functions on $T^*V\oplus \fg^*[-1]$. The differential is defined on generators of $\C[\fg^*[-1]]$ by $d\mu$. Explicitly, let $\{c_a\}$ be a set of linear coordinate functions on $\fg^*[-1]$, then the differential is given by:
\be
d c_a=(x_av, v^*). 
\ee
This DG algebra has a natural action of $G$ and $\C[\HR]$ is the $G$-invariant sub-algebra. 

\end{Rem}

The space $\HR$ has a Poisson structure inherited from $T^*V$, but it is usually not smooth. One usually defines a variantion of this space via the choice of a stability condition, and under certain conditions, the variation will be smooth. More precisely,  let $\xi\in \fg^*$ be a character of $G$, namely:
\be
\xi\in \Hom(G, \C^\times),
\ee
A point $p\in T^*V$ is called $\xi$-\textbf{semistable} if there exists $m\in \Z_{>0}$ and a function $f\in \C[T^*V]^{G, m\xi}$ such that $f(p)\ne 0$. Here $f\in \C[T^*V]^{G, m\xi}$ means that $f$ transforms under the action of $G$ as $m\xi$. Denote by $(T^*V)_{\xi}^{ss}$ the subset of all semi-stable points, which is an open subvariety of $T^*V$. Now for any subset $S\subseteq T^*V$, we define an equivalence relation on $S$ by declaring that $p\sim q$:
\be
\overline{G\cdot p}\bigcap \overline{G\cdot q}\bigcap S\ne \emptyset.
\ee

\begin{Def}
We define the GIT quotient variety $\HR^{\xi}$ associated to the stability condition $\xi$ by the quotient:
\be
\HR^{\xi}:= \left(\mu^{-1}(0)\cap (T^*V)_{\xi}^{ss}\right)/\sim.
\ee 
We also denote this by $T^*V\hypquotient_\xi G$
\end{Def}

\begin{Rem}\label{Rem:Proj}

We can alternatively define $\HR^{\xi}$ as the following projective variety, which is much more user-friendly:
\be\label{eq:projHiggs}
\HR^{\xi}:=\Proj (\bigoplus\limits_{n\geq 0} \mathbb{C}[\mu^{-1}(0)]^{G, m\xi}). 
\ee
Restriction of global functions give a map $\HR^\xi\to \HR$.
\end{Rem}

\begin{Rem}

Another way to think about the map $\HR^\xi\to \HR$ is that $\HR$ parametrizes closed $G$-orbits in $\mu^{-1}(0)$, and this map sends an orbit containing a semi-stable point to the closure. Again, there is a Poisson structure on  $\HR^{\xi}$ that is induced from $T^*V$, such that $\HR^\xi\to \HR$ is a map of Poisson varieties.

\end{Rem}

The variety $\HR^{\xi}$ is not smooth in general either, but in many cases, the variety $\HR$ has conical symplectic singularities and $\HR^\xi$ for a generic choice of $\xi$ gives rise to a symplectic resolution of $\HR$. In this case, one can completely characterize its deformation quantizations, and use it to study representation theory of non-commutative algebras. See for example, \cite{braden2016conical, braden2016conical2, losev2016deformations}. When $G=(\C^\times)^r$ is an abelian group, the following result also characterizes in which situation $\HR^{\xi}$ is a smooth variety. Define $(T^*V)_\xi^s$ the subset of $(T^*V)_\xi^{ss}$ where the stabilizer of $p$ is a finite subgroup. Then there is a cone $\Delta(G, T^*V)$ in $\fg^*$ defined by the property that for any $\xi\in \Delta(G, T^*V)$, $(T^*V)_\xi^{ss}=(T^*V)_\xi^s$, namely, stability and semi-stability agree. 

\begin{Thm}[\cite{hausel2002toric} Proposition 6.2; see also \cite{bellamy2012deformation} Corollary 4.13]\label{Thm:smoothstability}

If $\xi$ is in the interior of $\Delta(G, T^*V)$ and the action of $G$ on $T^*V$ is defined by a unimodular matrix over $\Z$, then $\HR^{\xi}$ is smooth. In this case, the map $\HR^{\xi}\to \HR$ is a resolution of singularities.  

\end{Thm}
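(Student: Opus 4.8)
Since the cone $\Delta(G,T^*V)$ and the unimodularity hypothesis only make sense in the abelian situation described just above, throughout I write $G=(\C^\times)^r$ and fix the integer $r\times n$ matrix $B=(b_1,\dots,b_n)$, with columns $b_i\in\Z^r$, defining the action, so that on $T^*V=\C^n\oplus\C^n$ with coordinates $(z,w)$ one has $t\cdot(z_i,w_i)=(t^{b_i}z_i,\,t^{-b_i}w_i)$ and $\mu(z,w)=\sum_i z_iw_i\,b_i\in\fg^*\cong\C^r$; unimodularity of $B$ means every $r\times r$ minor lies in $\{0,\pm 1\}$. The plan follows the standard theory of hypertoric varieties: show that $G$ acts freely on $\mu^{-1}(0)\cap(T^*V)^{ss}_\xi$, that this locus is smooth, and hence that the quotient $\HR^\xi$ is smooth; then recognise $\HR^\xi\to\HR$ as a projective birational morphism.

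First I would record two local facts at a point $p=(z,w)$ with support $S=\{i:z_i\neq 0\text{ or }w_i\neq 0\}$: the differential $d\mu_p(\delta z,\delta w)=\sum_i(w_i\,\delta z_i+z_i\,\delta w_i)\,b_i$ has image the real span of $\{b_i:i\in S\}$, and the stabiliser of $p$ in $G$ equals $\Hom\!\big(\Z^r/\langle b_i:i\in S\rangle_\Z,\,\C^\times\big)$, where $\langle\,\cdot\,\rangle_\Z$ denotes the generated sublattice. By the defining property of $\Delta(G,T^*V)$, for $\xi$ in its interior we have $(T^*V)^{ss}_\xi=(T^*V)^{s}_\xi$, so every semistable $p$ has finite stabiliser; by the second fact this forces $\langle b_i:i\in S\rangle_\Z$ to have full rank $r$. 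Here the unimodularity enters: some $r$ of the vectors $b_i$, $i\in S$, then span $\R^r$, hence form a matrix of determinant $\pm 1$, hence a $\Z$-basis of $\Z^r$, so $\langle b_i:i\in S\rangle_\Z=\Z^r$ and the stabiliser is \emph{trivial}. Thus $G$ acts freely on $(T^*V)^{ss}_\xi$, in particular on $\mu^{-1}(0)\cap(T^*V)^{ss}_\xi$; moreover $d\mu_p$ is then surjective, so this locus is smooth of dimension $2n-r$ and coincides with the derived zero fibre there. A free torus action on a smooth variety admits a smooth geometric quotient (the quotient map being a Zariski-locally trivial principal bundle), so $\HR^\xi=\big(\mu^{-1}(0)\cap(T^*V)^{ss}_\xi\big)/G$ is smooth.

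For the resolution statement, by Remark \ref{Rem:Proj} the morphism $\pi\colon\HR^\xi\to\HR$ induced by restriction of functions is projective. It is birational: a generic point of $\mu^{-1}(0)$ has all coordinates nonzero, its $G$-orbit is closed and free (the columns of the unimodular rank-$r$ matrix $B$ generate $\Z^r$), and it is $\xi$-semistable, so $\pi$ restricts to an isomorphism over the dense open of $\HR$ parametrising such orbits. Since $\HR$ is normal — the invariant quotient of $\mu^{-1}(0)$, which is a complete intersection regular in codimension one by the surjectivity-of-$d\mu$ computation above, hence normal by Serre's criterion — the projective birational morphism $\pi$ from the smooth variety $\HR^\xi$ is a resolution of singularities.

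The essential point, and the only place the hypotheses are used beyond general geometric invariant theory, is the passage from \emph{finite} to \emph{trivial} stabilisers, which rests on the lattice fact that unimodularity of $B$ forces every full-rank coordinate subfamily of its columns to generate $\Z^r$ on the nose. A secondary combinatorial point is the verification that the generic closed orbit is $\xi$-semistable for $\xi$ in the interior of $\Delta(G,T^*V)$; this is immediate from the dictionary between $\xi$-(semi)stability and membership of $\xi$ in the cones spanned by the $\pm b_i$ over the support of the point.
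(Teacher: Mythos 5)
The paper does not prove this theorem; it is quoted directly from \cite{hausel2002toric} and \cite{bellamy2012deformation}, so there is no internal argument to compare against. Your proof is the standard one from the hypertoric literature and is essentially the argument given in those references: compute the differential of $\mu$ and the stabiliser at a point in terms of the support $S$, use the assumption $\xi\in\mathrm{int}\,\Delta(G,T^*V)$ to get finite stabilisers on the semistable locus, and then use unimodularity to upgrade finite to trivial, which simultaneously gives surjectivity of $d\mu$ and freeness of the action; the smoothness of the GIT quotient and the projective birational nature of $\HR^\xi\to\HR$ then follow. The one spot you pass over a little quickly is the normality of $\HR$ needed at the end: invoking Serre's criterion the way you do presupposes that $\mu$ is flat (so that the classical $\mu^{-1}(0)$ is an honest complete intersection and hence $S_2$) and that the non-submersive locus of $\mu$ inside $\mu^{-1}(0)$ has codimension $\geq 2$ (for $R_1$). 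Both hold in the hypertoric setting, but neither is free — flatness of $\mu$ is a separate input (equivalent to the columns $b_i$ spanning, which is part of the hypertoric setup), and the codimension estimate needs a short stratification argument — and both are supplied carefully in the cited references.
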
 

In this work, we are not concerned with the variety $\HR^\xi$, but rather with the stacky version of the symplectic reduction. Namely, we denote by $\sHR$ the stacky quotient $[\mu^{-1}(0)/G]$. By construction, the affine variety $\HR$ is defined so that $\C[\HR]$ is the algebra of global functions on $\sHR$. Moreover, various versions of the GIT quotient $\HR^\xi$ can be viewed as open sub-stacks of $\sHR$. This means in particular that we can study $\HR^\xi$ by studying $\sHR$. We will show in the next sections that when considering the category of coherent sheaves and the Poisson structure, the stack $\sHR$ is a much easier object than $\HR^\xi$. When the setting is clear, we will also omit the subscripts and simply denote the stack by $[\CM]$, and various variations by $\CM, \CM^\xi$. 

There is an action of $\C^\times$ on $\CM$ and $[\CM]$, under which $T^*V$ are of weight $-1$ and $\fg^*$ is of weight $-2$.  Under such a grading, the algebra $\C[T^*V]$ as well as $\C[\mu^{-1}(0)]$ are positively graded. This action clearly extends to $\CM^\xi$.

\subsection{Tangent Lie algebra of symplectic reduction}\label{subsec:symquotangent}

\subsubsection{Tangent Lie algebra of stacks}

Since the work of \cite{kapranov1999rozansky}, it is known that the shifted tangent complex $T_X[-1]$ of a smooth complex variety $X$ has the structure of a Lie algebra (or better, an $L_\infty$ algebra) in the symmetric monoidal category $\mathrm{Coh}(X)$ (the bounded derived category of coherent sheaves on $X$). The Lie algebra structure is given by the Atiyah class. Moreover, every object in $\Coh(X)$ comes equipped canonically with an action of $T_X[-1]$. 

When $X$ is symplectic, the Lie algebra $T_X[-1]$ also admits an invariant bilinear form, namely the symplectic form, which is naturally a map:
\be
T_X[-1]\otimes T_X[-1]\to \CO_X[-2].
\ee
Dualizing this map, one defines a Poisson bi-vector:
\be
\Omega_X: \CO_X[-2]\to T_X[-1]\otimes T_X[-1].
\ee
It was explained in \cite{kapustin2009three, kapustin2010three}, and later in \cite{roberts2010rozansky} that Rozansky-Witten theory with target $X$ is controlled by $T_X[-1]$ as a metric Lie algebra object in $\Coh (X)$. There are many layers of structures for which this is true, and we will focus on the structure of the braided tensor category this theory assigns. For any object $M, N\in \Coh (X)$, the action of $T_X[-1]$ together with $\Omega_X$ gives rise to an element in $\End^2(M\otimes N)$:
\be
\btik
M\otimes N[-2]\rar{\Omega_X} &  T_X[-1]\otimes T_X[-1]\otimes M\otimes N\rar{\tau_{23}} & T_X[-1]\otimes M\otimes T_X[-1]\otimes N\rar{\alpha_M\otimes \alpha_N} & M\otimes N
\etik
\ee
Here $\tau_{23}$ denotes flipping 2-3 entries, and $\alpha_M, \alpha_N$ are the Atiyah class of $M, N$, namely the module structure of $M, N$ as $T_X[-1]$ module. We will still denote this morphism by $\Omega_X$, or specify $\Omega_X(M, N)$ if necessary. The following was explained in \cite{roberts2010rozansky}, using the work of \cite{le1997parallel} on Kontsevich integrals and universal Vassiliev invariants. 

\begin{Thm}[ \cite{roberts2010rozansky} Theorem 9.1]\label{Thm:BTCRW}
Let $\Coh (X\lbb\hbar\rbb)$ be the category of coherent sheaves of $X\lbb\hbar\rbb$, where $\hbar$ is in cohomological degree $-2$. Then $\Coh (X\lbb\hbar\rbb)$ has naturally the structure of a braided tensor category, such that:

\begin{itemize}

\item The tensor product is given by the ordinary derived tensor product of sheaves.

\item The braiding is given by $\tau\circ e^{\hbar \Omega_X}$, which is an element in $\End^0(M\otimes N)$.

\item The associativity isomorphism $\Phi$ is given by Drinfeld's universal solutions of KZ equations. 

\end{itemize}

\end{Thm}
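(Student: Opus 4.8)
The plan is to recognize $(\Coh(X),\otimes_{\CO_X})$, equipped with the canonical $T_X[-1]$-action on every object and the element $\Omega_X$, as an \emph{infinitesimally braided} monoidal category (a representation of the category of chord diagrams / holonomy Lie algebras), and then to transport the universal braided structure built from a Drinfeld associator. First I would recall from \cite{kapranov1999rozansky} (see also \ref{subsec:symquotangent}) that $T_X[-1]$ is an $L_\infty$-algebra object in $\Coh(X)$ with bracket the Atiyah class $\mathrm{at}_X\in\Ext^1(T_X\otimes T_X,T_X)$, that each $M\in\Coh(X)$ is canonically a module via $\alpha_M=\mathrm{at}_M$, and that the derived tensor product of two modules is again a module, the action on $M\otimes_{\CO_X}N$ being $\alpha_M\otimes\mathrm{id}_N+\mathrm{id}_M\otimes\alpha_N$ (the Leibniz rule for the Atiyah class of a tensor product). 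This makes $\otimes_{\CO_X}$ into a monoidal structure for which $T_X[-1]$ behaves like a cocommutative Lie algebra object and every object is a module.

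Next, using that $X$ is symplectic, the pairing $T_X[-1]\otimes T_X[-1]\to\CO_X[-2]$ is $\nabla$-parallel, hence $\mathrm{ad}$-invariant; its dual $\Omega_X:\CO_X[-2]\to T_X[-1]\otimes T_X[-1]$ is then a symmetric, invariant ``Casimir.'' Acting through the module structures as in the diagram preceding the statement, $\Omega_X$ defines for each pair $M,N$ an element of $\End^2(M\otimes N)$; since $\hbar$ has cohomological degree $-2$ and $\Coh(X)$ is a bounded category, $\hbar\Omega_X\in\End^0$ is nilpotent on each $M\otimes N$, so $e^{\hbar\Omega_X}$ and the associator below are well-defined (finite modulo each power of $\hbar$). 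The crucial point to establish is the \emph{infinitesimal braid relation} as endomorphisms of a triple tensor product $M_1\otimes M_2\otimes M_3$,
\be
[\Omega_{12}+\Omega_{13},\,\Omega_{23}]=0,\qquad [\Omega_{12}+\Omega_{23},\,\Omega_{13}]=0,\qquad [\Omega_{12},\,\Omega_{34}]=0,
\ee
where the $\Omega_{ij}$ act via the module and coproduct structures. This is exactly the statement that the KZ-type connection $\hbar\sum_{i<j}\Omega_{ij}\,d\log(z_i-z_j)$ valued in $\End$ of tensor powers is flat, and it follows from $\mathrm{ad}$-invariance of $\Omega_X$ together with the Bianchi identity for the Atiyah class (the $L_\infty$-Jacobi identity for $T_X[-1]$).

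Granting this, the data $(\Coh(X),\otimes_{\CO_X},\Omega_X)$ is a representation of the operad of infinitesimal braids, so there is a monoidal functor from Drinfeld's category $\mathbf A$ of modules over the holonomy Lie algebras $\mathbf t_n$ into $\End$ of tensor powers in $\Coh(X)$. Pulling back along this functor the braided structure on $\mathbf A\lbb\hbar\rbb$ attached to a Drinfeld associator $\Phi_{KZ}$ — with braiding $\tau\circ e^{\hbar\Omega}$ and associativity constraint $\Phi_{KZ}(\hbar\Omega_{12},\hbar\Omega_{23})$ — produces precisely the claimed braiding $\tau\circ e^{\hbar\Omega_X}$ and associator $\Phi$ on $\Coh(X\lbb\hbar\rbb)$, with the ordinary derived tensor product as monoidal structure. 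The hexagon and pentagon axioms hold because they hold universally in $\mathbf A\lbb\hbar\rbb$ (this is the defining property of a Drinfeld associator); compatibility with the unit is clear since $\CO_X$ acts trivially.

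\textbf{Main obstacle.} The only non-formal step is the second one: verifying the infinitesimal braid relations for $\Omega_X$ acting on tensor powers. Unwound, this is the assertion that the symplectic form is flat for the Atiyah connection and couples to the curvature via a Bianchi-type identity — the same computation that in \cite{roberts2010rozansky} is organized through the universal Vassiliev invariant / Kontsevich integral of \cite{le1997parallel} and the functoriality of configuration-space integrals. Everything else is a transport of the universal braided structure of the Drinfeld category along the chord-diagram representation furnished by $(T_X[-1],\Omega_X)$.
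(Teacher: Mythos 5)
The paper does not prove this statement; it is imported verbatim as Theorem 9.1 of \cite{roberts2010rozansky}, with the remark that the argument rests on Kontsevich integrals and universal Vassiliev invariants via \cite{le1997parallel}. Your proposal is a correct reconstruction of exactly that argument: recognize $(\Coh(X),\otimes_{\CO_X},T_X[-1],\Omega_X)$ as an infinitesimally braided (chord-diagram) monoidal category, verify the four-term relations from $\mathrm{ad}$-invariance and symmetry of $\Omega_X$ (which is the content of the Bianchi identity for the Atiyah class together with closedness of the symplectic form), and transport the universal braided structure along a Drinfeld associator $\Phi_{KZ}$. You have also correctly flagged the nilpotency of $\hbar\Omega_X$ on bounded complexes as the point guaranteeing convergence, so nothing essential is missing; this matches the route the cited reference takes.
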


This braided tensor category is proposed to be the braided tensor category associated to Rozansky-Witten TQFT. The question was raised in \cite{roberts2010rozansky} of whether there is a twisting similar to \cite{etingof1996quantization} making the associativity isomorphism trivial. In the next section, we will answer this question positively for the stack $[\CM]$. We note that this also implies the similar statment for $\CM^\xi$ since the category $\Coh (\CM^\xi)$ is a DG quotient of $\Coh ([\CM])$ by a tensor ideal (namely the ideal generated by sheaves supported on the non-semistable loci). But before doing so, one must be able to extend the theory of tangent Lie algebras to stacks. In the remainder of this section, we recall the theory of tangent Lie algebras for stacks developed by \cite{hennion2018tangent} for derived Artin stacks, and in  \cite{gaitsgory2017study} for general pre-stacks locally almost of finite type. 

\begin{Rem}
We must warn the reader that the theory of tangent Lie algebras developed in \cite{hennion2018tangent} and  \cite{gaitsgory2017study} uses the theory of infinite categories, and as such goes beyond what is considered in this paper (abelian categories and the derived categories thereof). The powerful machinery of $\infty$-categories allows us to simply restrict the results there to the homotopy category, which are more classical derived categories. 

\end{Rem}

\begin{Rem}

We will follow more closely  \cite{gaitsgory2017study}, although it is equivalent to the theory of \cite{hennion2018tangent} when restricted to derived Artin stacks. In the process of introducing this theory, many definitions will be omitted simply because introducing them will side-track us from the main objectives of this paper. All definitions can be found in \cite{gaitsgory2017study}. We only need to know that the theories of tangent Lie algebras applies to $[\CM]$. 

\end{Rem}

For a stack $X$ that is locally almost finite type, the work of \cite{gaitsgory2017study} developed a theory of ind-coherent sheaves $\IndCoh (X)$. We will not go into detail about this category, since in our applications, the tangent Lie algebra will be an object in $\Perf(X)$.  Denote by $\mathrm{Lie}_X$ the category of DG Lie algebras in $\IndCoh (X)$, and by $\mathrm{PSt}_X^f$ the category of pointed formal stacks over $X$. By a pointed formal stack over $X$ we mean the following:
\be
\btik
Y \arrow[r, shift left, rightharpoonup, "\pi"] & \arrow[l, shift left, rightharpoonup, "s"]X
\etik
\ee
where $\pi$ is an inf-schematic nil-isomorphism, namely the restriction of $\pi$ on the reduced stack $Y^{\mathrm{red}}$ is an isomorphism, and $s$ is a section of $\pi$ such that $\pi\circ s=\mathrm{Id}$. Denote by $\mathrm{Gr}_X^f$ the category of formal groups over $X$, namely the category of group objects in $\mathrm{PSt}_X^f$. The first important result in \cite{gaitsgory2017study} is that there is a continuous functor $\Omega_X: \mathrm{PSt}_X^f\to \mathrm{Gr}_X^f$ given by:
\be
\Omega_X Y=X\times_Y X,
\ee
the derived intersection of $X$ in $Y$. This functor is an equivalence of categories, with inverse given by the Bar-complex of a group.  Moreover, there is a functor $\CL_X: \mathrm{Gr}_X^f\to \mathrm{Lie}_X$, which is essentially taking the Lie algebra of the formal group, whose adjoint $\mathrm{Exp}_X:  \mathrm{Lie}_X\to \mathrm{Gr}_X^f$ makes this adjoint pair equivalences of derived categories. We will un-pack this definition in more down-to-earth language. The tangent Lie algebra is defined to be the image of diagonal. 

\begin{Def}
Define the tangent Lie algebra $T_X[-1]$ of $X$ to be:
\be
T_X[-1]:= \CL_X \lp \Omega_X \lp \wh{X\times X}\rp\rp,
\ee
where $\wh{X\times X}$ is the formal completion of $X\times X$ along diagonal $\Delta: X\to X\times X$. 

\end{Def}

Let us now un-pack the definition a bit. If $\pi: G_X\to X$ is a formal group object over $X$, namely one is given a formal group law:
\be
m: G_X\times_X G_X\to G_X.
\ee
Let $\omega_{G_X}$ be the dualizing sheaf of $G_X$, then $U=p_*(\omega_{G_X})$ can be given the structure of a co-commutative bi-algebra in $\IndCoh(X)$. The co-algebra structure is given by the fact that $\omega_{G_X}$ is the dualizing sheaf, and the algebra structure is induced by $m$. Roughly speaking, one should think of $U$ as the universal enveloping algebra of the Lie algebra of $G_X$, such that $\CL_X(G_X)$ is precisely the subset of primitive elements of $U$. One can think of the filtered dual $U'$, which is a commutative bi-algebra, as the algebra of functions on $G_X$ (or better, a limit of rings representing $G_X$). Under such an identification, the formal group law $m: G_X\times_X G_X\to G_X$ is represented by the co-product on $U'$. If $G_X$ is classical and locally a fiber bundle over $X$, then the above is not only heuristic, but in fact rigorous. Namely, in this case, the Lie algebra $\fg_X:=\CL_X(G_X)$ is a classical Lie algebra over $X$ and is a vector bundle over $X$. Furthermore, one can identify $G_X$ as a formal scheme over $X$ with $\wh{\fg_X}$, the formal completion of $\fg_X$ at $0$ (the trivial fiber), whose group law is given by Baker-Campbell-Hausdorff formula. When $X$ itself is smooth and classical, $T_X[-1]$ is a vector bundle in degree $1$, and coincides with the tangent Lie algebra defined in \cite{kapranov1999rozansky}. In this case, the relation between tangent Lie algebra and loop space has been explored in \cite{ben2012loop}. 

This definition of tangent Lie algebra is functorial, namely that when $f: X\to Y$ is a morphism, then one obtain a natural map $f: T_X[-1]\to f^*T_Y[-1]$. At the level of formal stacks it is the map:
\be
1\times f: \wh{X\times X}\to \wh{X\times Y}
\ee
where $\wh{X\times Y}$ is the formal completion of $X\times Y$ at the image of $f$. If $f$ is an open embedding, then it is clear that $f$ induces a quasi-isomorphism of DG Lie algebras. Therefore, we obtain a tangent Lie algebra $T_{[\CM]}[-1]$ on the symplectic stack $[\CM]$, whose restriction over $\CM^\xi$ gives the tangent Lie algebra of this smooth variety (when it is smooth). We will show later that the tangent Lie algebra of the stack $[\CM]$ is in fact easier to compute than $\CM^\xi$. It seems unlikely at this stage since the definition of $T_{[\CM]}[-1]$ involves a sequence of non-trivial functors, all abstractly defined, whereas the definition for $T_{\CM^\xi}[-1]$ uses explicitly the Atiyah class. We will record the following lemma, which will be used to compute the tangent complex of $[\CM]$. Let $l_X$ be a Lie algebra over $X$, and for the purpose of application suppose it is a DG Lie algebra object in $\Perf(X)$. We denote by $\mathrm{CE}^*(l_X)$ the Chevalley-Eilenberg cochain complex of $l_X$, which is a commutative  DG algebra over $X$. The underlying commutative algebra is simply:
\be
\mathrm{CE}^*(l_X)=\mathrm{Sym}^*\lp l_X^*[-1]\rp
\ee
whose differential is given by Chevalley-Eilenberg differential. 

\begin{Lem}\label{Lem:affineTangent}

Let $\mathfrak{l}$ be a finite-dimensional DG Lie algebra (or more generally $L_\infty$ algebra) concentrated in positive degrees, and let $X=\mathrm{Spec}(\mathrm{CE}^*(\mathfrak{l}))$. Denote by $l_X$ the Lie algebra object given by $\mathrm{CE}^*(\mathfrak{l})\otimes \mathfrak{l}$, where the action of $\mathfrak{l}$ on itself is by conjugation. Then we have canonical identification:
\be
T_X[-1]\cong l_X.
\ee

\end{Lem}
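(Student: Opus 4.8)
The plan is to compute both sides of the claimed identification through explicit chain models and match them. Write $A = \mathrm{CE}^*(\mathfrak{l}) = \mathrm{Sym}^*(\mathfrak{l}^*[-1])$, so $X = \mathrm{Spec}(A)$ is an affine derived scheme. Since $\mathfrak{l}$ is concentrated in positive degrees, $\mathfrak{l}^*[-1]$ is concentrated in degrees $\le 0$, so $A$ is a non-positively graded cdga: this is a genuine (derived) affine scheme, not a stack, and the whole machinery of $\Omega_X$ and $\CL_X$ from \cite{gaitsgory2017study} specializes. The first step is to identify $\wh{X\times X}$, the formal completion of $X\times X$ along the diagonal. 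Because $X$ is affine, $X\times X = \mathrm{Spec}(A\otimes A)$, and the diagonal is cut out by the ideal $I = \ker(A\otimes A\to A)$; the formal completion is pro-represented by $\{(A\otimes A)/I^n\}$. The second step is to apply $\Omega_X$, i.e. form the group object $X\times_{\wh{X\times X}} X$ over $X$; unwinding, the relevant cosimplicial/filtered algebra is the completion of $A\otimes A$ along $I$, base-changed appropriately, and one recognizes this as (a completion of) the Hochschild-type complex whose primitives compute $T_X[-1]$.

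The cleaner route, which I would actually take, is to use the functoriality and the universal computation for cdgas directly. For $X = \mathrm{Spec}(A)$ affine, the shifted tangent complex is $T_X[-1] = \mathbb{T}_X[-1]$ where $\mathbb{T}_X = \mathrm{Der}(A)$, or dually $\mathbb{L}_X = \Omega^1_{A}$; and the Lie bracket on $T_X[-1]$ under Koszul duality is computed by the derived intersection of the diagonal, which for a cdga is governed by the cotangent complex with its shifted-Lie (Atiyah) structure. So the concrete task reduces to: (i) compute $\mathbb{L}_{A}$ for $A = \mathrm{Sym}(\mathfrak{l}^*[-1])$, and (ii) compare its shifted dual, as a Lie algebra object in $\Perf(X)$, with $l_X = A\otimes \mathfrak{l}$ with the conjugation bracket. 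For (i): the cotangent complex of a cdga presented by generators and the Chevalley–Eilenberg differential is the free $A$-module on the generators, i.e. $\mathbb{L}_{A} \simeq A\otimes \mathfrak{l}^*[-1]$, with internal differential the $A$-linear extension of the CE codifferential's linear (degree $\le 1$ bracket) part — this is precisely the Koszul complex of CE cochains viewed as a square-zero-type extension. Dualizing over $A$ and shifting, $T_X[-1] = \mathbb{T}_X[-1] \simeq A\otimes \mathfrak{l}$ as a complex, with differential the transpose, which is exactly the conjugation action of $\mathfrak{l}$ on itself tensored into $A$ — matching the stated differential of $l_X$.

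The remaining point, and the one requiring care rather than cleverness, is matching the \emph{Lie bracket}: one must check that the bracket on $T_X[-1]$ coming from the abstract $\CL_X\circ\Omega_X$ construction (equivalently the Atiyah class of $X$) coincides with the pointwise bracket on $A\otimes\mathfrak{l}$ induced from $\mathfrak{l}$. Here I would invoke the Koszul-duality philosophy made precise in \cite{gaitsgory2017study}: for $X = \mathrm{Spec}(\mathrm{CE}^*(\mathfrak{l}))$ with $\mathfrak{l}$ finite-dimensional in positive degrees, the formal completion $\wh{X\times X}$ along the diagonal is the formal moduli problem Koszul-dual to the $A$-linear dg Lie algebra $A\otimes\mathfrak{l}$, because base-changing the formal moduli problem $B\mathfrak{l}$ (whose Chevalley–Eilenberg algebra is $\mathrm{CE}^*(\mathfrak{l})$) along the point $\mathrm{Spec}(\mathrm{CE}^*(\mathfrak{l}))\to \mathrm{pt}$ yields exactly $\wh{X\times X}\to X$. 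Then $\CL_X$ of its loop group recovers $A\otimes\mathfrak{l}$ with its $A$-linear bracket, by the $A$-linear version of the Lurie/Hennion equivalence between formal moduli problems and dg Lie algebras. The main obstacle is purely bookkeeping: one must verify that the base-change of formal moduli problems is compatible with the group-object and Lie-algebra functors $\Omega_X, \CL_X$ of \cite{gaitsgory2017study} — i.e. that the square relating $B\mathfrak{l}/\mathrm{pt}$ and $\wh{X\times X}/X$ commutes with $\CL$ — which follows formally from the functoriality of tangent Lie algebras stated in the excerpt ($f\colon T_X[-1]\to f^*T_Y[-1]$ applied to $X\to\mathrm{pt}$), but needs to be spelled out to conclude that the bracket, and not merely the underlying complex, transports correctly. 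Granting that, the identification $T_X[-1]\cong l_X$ as Lie algebra objects in $\Perf(X)$ follows.
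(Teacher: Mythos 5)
Your proof is correct in substance and uses the same essential idea as the paper, but it arrives there through a longer path. The crux of the paper's argument is a single observation: as a pointed formal stack over $X$, the completion $\wh{X\times X}$ is represented by $\mathrm{CE}^*(l_X)$, via the explicit isomorphism of cdgas
\[
\mathrm{CE}^*(\mathfrak{l})\otimes\mathrm{CE}^*(\mathfrak{l})\xrightarrow{\ \sim\ } \mathrm{CE}^*(l_X),\qquad x_a\otimes 1\mapsto x_a,\quad 1\otimes x_a - x_a\otimes 1\mapsto dx_a,
\]
which is visibly compatible with the projection $X\times X\to X$ (first factor) and carries the diagonal ideal to the augmentation ideal of the relative CE algebra. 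Once this is written down, $T_X[-1]\cong l_X$ follows immediately from the definition of $\CL_X\circ\Omega_X$, with no need to treat the underlying complex and the bracket separately.

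Your "cleaner route" is really the same identification in different clothing: $\wh{X\times X}\cong X\times B\mathfrak{l}$ as pointed formal stacks over $X$, which is exactly what the explicit map above exhibits. The difference is that you appeal to compatibility of base-change of formal moduli problems with $\Omega_X$ and $\CL_X$ as a black box, flagging it as "bookkeeping to be spelled out," whereas the paper short-circuits that general statement by writing the isomorphism on generators and noting it is compatible with the section and the projection. Your first step — computing $\mathbb{L}_A\simeq A\otimes\mathfrak{l}^*[-1]$ and dualizing to get the underlying complex of $T_X[-1]$ — is correct but redundant once the base-change/Koszul-duality step is accepted, since that step already produces the full Lie algebra object. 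Two small cautions: (1) for the $L_\infty$ case you allow in the statement, the differential on $\mathbb{L}_A$ picks up contributions from all higher brackets, not just "degree $\le 1$ bracket" terms, so that phrasing undersells what is going on (it does not affect the conclusion); and (2) one should check, as the paper's formula does implicitly, that the diagonal section of $\wh{X\times X}$ matches the zero section $X\times\{0\}\hookrightarrow X\times B\mathfrak{l}$ — otherwise one is computing the Lie algebra of a group with the wrong basepoint. Granting these, your argument closes the same way the paper's does.
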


\begin{proof}

Since $X$ is affine, we only need to show that $\wh{X\times X}$ as an object in $\mathrm{PSt}_X^f$ is represented by $\mathrm{CE}^*(l_X)$.  This is clear since we have a canonical isomorphism:
\be
\C[X\times X]=\mathrm{CE}^*(\mathfrak{l})\otimes \mathrm{CE}^*(\mathfrak{l})\cong \mathrm{CE}^*(l_X)
\ee
compatible with the projection along the first factor, given by:
\be
x_a\otimes 1\mapsto x_a, \qquad 1\otimes x_a-x_a\otimes 1\mapsto dx_a
\ee
where $\{x_a\}$ is a basis for $\mathfrak{l}$ and $dx_a$ represents sections of $l_X$. 

\end{proof}

\subsubsection{Poisson structure and braided tensor category}

We have now seen that for a stack $X$, one can define its tangent Lie algebra $T_X[-1]$, which is functorial with respect to open embeddings and which reduces to the one defined by \cite{kapranov1999rozansky} when $X$ is smooth. Applied to $X=[\CM]$, we obtain the tangent Lie algebra $T_{[\CM]}[-1]$, whose restriction gives rise to tangent Lie algebras over $\CM^\xi$. Since $[\CM]$ is a symplectic stack \cite{safronov2016quasi}, the tangent Lie algebra $T_{[\CM]}[-1]$ also carries a symplectic form, giving rise to a section:
\be
\omega: \CO\longrightarrow T_{[\CM]}[-1]\otimes T_{[\CM]}[-1].
\ee
The last ingredients to define a braided tensor category structure on $\Coh ([\CM])$ similar to Theorem \ref{Thm:BTCRW} is a canonical action of $T_{[\CM]}[-1]$ on objects in $\Coh ([\CM])$. From the definition of $T_{[\CM]}[-1]$, such an action simply follows from adjunctions. More precisely, let $p: \Omega_X(\wh{X\times X})\to X$ be the canonical projection, then by definition the functor $p_*p^!$ as a monoid can be identified with $U(T_X[-1])$, and the canonical adjunction:
\be
p_*p^!M\to M
\ee
gives rise to an action of $U(T_X[-1])$ on $M$, therefore an action of $T_X[-1]$ on $M$. Combining all these ingredients, we can extend Theorem  \ref{Thm:BTCRW} to $[\CM]$.

\begin{Thm}

Let $[\CM]=[\mu^{-1}(0)/G]$. Then the category $\Coh ([\CM]\lbb\hbar\rbb)$ has the structure of a braided tensor category, where the braiding is given by $\tau\circ e^{2\pi i \omega}$ and the associativity is given by the universal solution of KZ equation. 

\end{Thm}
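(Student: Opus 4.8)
The plan is to deduce this theorem by transporting the structure of Theorem \ref{Thm:BTCRW} from the smooth-variety setting to the stack $[\CM]$ using the $\infty$-categorical machinery of tangent Lie algebras recalled above. Concretely, I would proceed in the following steps. First, I would verify that $[\CM] = [\mu^{-1}(0)/G]$ is a derived Artin stack locally almost of finite type, so that the constructions of \cite{hennion2018tangent, gaitsgory2017study} apply: $\mu^{-1}(0)$ is a quasi-compact derived affine scheme of finite type (the Koszul complex of Remark \ref{Rem:Cmu}), and quotienting by the reductive group $G$ keeps us within the class of stacks for which $\IndCoh$, $\Perf$, and the tangent Lie algebra $T_{[\CM]}[-1]$ are defined. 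In fact, since $G$ is reductive and $\mu^{-1}(0)$ is a global complete intersection, $T_{[\CM]}[-1]$ lies in $\Perf([\CM])$ (it is perfect of Tor-amplitude $[-1,1]$), which is what lets us phrase everything inside $\Perf$ rather than all of $\IndCoh$.

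Next I would assemble the three pieces of structure. (i) The \emph{Lie algebra object}: $T_{[\CM]}[-1]$ is an $L_\infty$-algebra object in $\Perf([\CM])$ by the definition $T_{[\CM]}[-1] = \CL_{[\CM]}(\Omega_{[\CM]}(\wh{[\CM]\times [\CM]}))$. (ii) The \emph{canonical action} on every object: as explained just above the statement, writing $p\colon \Omega_{[\CM]}(\wh{[\CM]\times[\CM]}) \to [\CM]$ for the projection, the comonad $p_* p^!$ is identified with the universal enveloping algebra $U(T_{[\CM]}[-1])$, and the counit $p_*p^! M \to M$ equips every $M \in \Coh([\CM])$ with a $T_{[\CM]}[-1]$-module structure (the Atiyah class), functorially and compatibly with tensor products — this last compatibility (that the action on $M \otimes N$ is the ``coproduct'' of the actions on $M$ and $N$) is exactly the statement that $p_*p^!$ is a monoidal comonad, which follows from base change along the diagonal. (iii) The \emph{symplectic pairing}: by \cite{safronov2016quasi}, $[\CM] = T^*[V/G]$ carries a $0$-shifted symplectic structure, equivalently a non-degenerate symplectic form on $T_{[\CM]}[-1]$, i.e. a map $\omega\colon \CO_{[\CM]} \to T_{[\CM]}[-1]\otimes T_{[\CM]}[-1]$ of degree appropriate to the shift, invariant under the $L_\infty$-bracket. (Here one should double-check the normalization $2\pi i$ versus $\hbar$; I would match it to the convention in Theorem \ref{Thm:BTCRW} where the braiding is $\tau\circ e^{\hbar\Omega_X}$, absorbing the $2\pi i$ into the definition of the formal parameter exactly as in the KZ normalization used for $\Phi_{KZ}^\hbar$.)

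Given these three pieces, the remaining content is to observe that the construction in \cite{roberts2010rozansky} — producing from a metric $L_\infty$-algebra object $\mathfrak{l}$ in a symmetric monoidal category, together with its canonical action on all objects, a braided tensor structure with braiding $\tau \circ e^{\hbar\Omega}$ and associator the KZ associator — is formal: it only uses the axioms of a metric Lie algebra object acting on objects of a symmetric monoidal ($\C\lbb\hbar\rbb$-linear) category, via Drinfeld's universal constructions and the Le--Murakami/Kontsevich-integral input of \cite{le1997parallel}. None of this uses smoothness of the base; smoothness was only used in \cite{roberts2010rozansky} to \emph{identify} $T_X[-1]$ concretely. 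Since we now have, for the stack $[\CM]$, all the same abstract inputs (by the previous paragraph), the same argument applies verbatim and produces the braided tensor structure on $\Coh([\CM]\lbb\hbar\rbb)$ with braiding $\tau\circ e^{2\pi i\omega}$ and associativity the universal KZ solution. I would phrase this as: ``the proof of Theorem \ref{Thm:BTCRW} applies word-for-word, replacing the tangent Lie algebra of a smooth variety by $T_{[\CM]}[-1]$ and its symplectic form.''

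\textbf{Main obstacle.} The one genuinely non-formal point is establishing that the comonad $p_*p^!$ on $\Coh([\CM])$ is monoidal with respect to the derived tensor product — equivalently, that the canonical $T_{[\CM]}[-1]$-actions are multiplicative under $\otimes$ — at the level of the homotopy (triangulated) category, not just up to incoherent homotopy. In the smooth case this is the classical statement that the Atiyah class of a tensor product is the sum of the Atiyah classes; in the stacky/derived setting it requires base change along $\wh{[\CM]\times[\CM]} \to [\CM]\times[\CM]$ and the compatibility of formal completion with products, which is available in \cite{gaitsgory2017study} but must be invoked carefully. I would handle this by citing the relevant monoidality of the formal completion functor and of $\CL_{[\CM]}$ from \cite{gaitsgory2017study, hennion2018tangent}, then restricting to the homotopy category at the end, exactly as flagged in the Remark preceding the theorem. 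The normalization of the symplectic form (the $2\pi i$) and its invariance under the bracket are routine once one fixes conventions, and the verification that $[\CM]$ is locally almost of finite type is immediate.
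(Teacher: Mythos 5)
Your proposal matches the paper's approach: the paper likewise assembles the three ingredients (tangent Lie algebra object $T_{[\CM]}[-1]$, symplectic form from \cite{safronov2016quasi}, canonical action from $p_*p^!$) and then states that the construction of \cite{roberts2010rozansky} extends verbatim, offering no further argument. Your explicit flagging of the monoidality of the comonad $p_*p^!$ as the one non-formal point is more careful than the paper's terse treatment and is a worthwhile clarification, but it does not change the route.
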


In the next section, we will comput $T_{[\CM]}[-1]$ explicitly and show that it is related to $\tgv$. In particular, it will be represented by a perfect complex. This allows one to restrict the above to the full sub-category $\Perf ([\CM]\lbb\hbar\rbb)$ and various equivariant versions of the category. 

\section{Symplectic reduction and tangent Lie algebras}\label{sec:SRandTakiff}

\subsection{A DG model for the tangent Lie algebra}

\subsubsection{Tangent Lie algebra of fibers}\label{subsubsec:tangentfiber}

In this section, we compute the tangent Lie algebra of $[\CM]$ explicitly. This computation seems to be very familiar to experts, and can be deduced from \cite[Corollary 6.4.33]{nuiten2018lie}. We choose to present a direct proof here for the completeness of the presentation. Let $V$ and $W$ be two finite dimensional vector spaces with a linear $G$ action. Let $f: V\to W$ be a $G$-equivariant morphism, and let $X=f^{-1}(0)$. We will show in this section how to compute the tangent Lie algebra of $[X/G]$ explicitly as a $G$-equivariant $L_\infty$ algebra. 

Fix basis $\{v_i\}\subseteq V$ and $\{w_i\}\subseteq W$, let $\rho_V: V^*\to \C[G]\otimes V^*$ be the map given by matrix co-efficients, which is simply the pull-back of linear functions on $V$ along the map $G\times V\to V$. This map extends to an algebra homomorphism 
$\mathrm{Sym}(V^*)\to \C[G]\otimes \mathrm{Sym}(V^*)$. We denote by $T_V[-1]$ the tangent Lie algebra of $V$, which is simply the module $\C[V]\otimes V[-1]$ with trivial Lie bracket. The Lie algebra $\C[V]\otimes \fg$ acts on $T_V[-1]$ by Lie algebra homomorphism, and therefore we can form the smashed product  $\C[V]\otimes \fg\# V[-1]$. The action also gives rise to a map $\fg\to T_V$ by mapping $\fg$ to global vector fields on $V$. In the given basis, this map is given by:
\be
x_a\mapsto \rho(x_a)^i{}_j v_i v_j^*. 
\ee
We treat this as a differential on the smashed product $\C[V]\otimes \fg\# V[-1]$, which gives it the structure of a DG Lie algebra. It is obviously $G$-equivariant, and we denote this DG Lie algebra in $\Coh ([V/G])$ by $l_{V/G}$. 

\begin{Prop}\label{Prop:TVG}

The Lie algebra object $l_{V/G}$ can be identified with the tangent Lie algebra $T_{[V/G]}[-1]$. 

\end{Prop}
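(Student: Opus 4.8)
The plan is to use the description of the tangent Lie algebra from Section~\ref{subsec:symquotangent}, namely $T_X[-1]\cong \CL_X\big(\Omega_X(\wh{X\times X})\big)$, together with the equivalence $\mathrm{Lie}_X\simeq \mathrm{Gr}_X^f\simeq \mathrm{PSt}_X^f$. Under this equivalence the DG Lie algebra $l_{V/G}$ corresponds to the pointed formal stack over $[V/G]$ whose ring of functions is the Chevalley--Eilenberg complex $\mathrm{CE}^*(l_{V/G})=\Sym_{\CO_{[V/G]}}\big(l_{V/G}^\vee[-1]\big)$, pointed by the augmentation. So the proposition is equivalent to producing an isomorphism of pointed formal stacks over $[V/G]$
\be
\wh{[V/G]\times[V/G]}\;\cong\;\mathrm{Spec}_{[V/G]}\,\mathrm{CE}^*(l_{V/G}),
\ee
compatible with the diagonal section; applying $\CL_{[V/G]}\circ\Omega_{[V/G]}$ then yields the identification of Lie algebra objects, and simultaneously transports the canonical $T_{[V/G]}[-1]$-action on objects (coming from the adjunction $p_*p^!M\to M$) to the evident $l_{V/G}$-action.

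I would construct this isomorphism directly, in the spirit of Lemma~\ref{Lem:affineTangent}, using the presentation of $[V/G]$ as the total space of the vector bundle over $BG$ associated to $V$. Working $G$-equivariantly on $V$ with coordinates dual to a basis $\{v_i\}$, one has $[V/G]\times[V/G]=[(V\times V)/(G\times G)]$, with the diagonal cut out by the relative coordinates $\eta_i:=1\otimes v_i^*-v_i^*\otimes 1$ together with the relative group direction $(G\times G)/G_{\mathrm{diag}}\cong G$. Completing along the diagonal, and resolving the classifying-space directions $\wh{BG\times BG}$ (completed along the diagonal $BG$) by the Chevalley--Eilenberg complex $\mathrm{CE}^*(\fg)$ --- which is the standard computation $T_{BG}[-1]\cong\fg$, coming from $\Omega_{BG}(\wh{BG\times BG})\cong[\wh{G}/G]$ --- gives $\CO_{\wh{[V/G]\times[V/G]}}\cong \C[V]\otimes\C\lbb\eta\rbb\otimes\Lambda^\bullet\fg^\vee$ as a complete graded $\C[V]$-algebra, with the $\eta_i$ in degree $0$ and the generators $\theta^a$ of $\Lambda^\bullet\fg^\vee$ in degree $1$. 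Matching $\theta^a$ with the generators dual to $\fg\subseteq l_{V/G}$ and $\eta_i$ with those dual to $V[-1]\subseteq l_{V/G}$, I would check term by term that the differential coincides with the Chevalley--Eilenberg differential of $l_{V/G}$: the purely $\Lambda^\bullet\fg^\vee$ part reproduces the bracket of $\fg$, the $\eta$--$\theta$ cross terms reproduce the action $\rho\colon\fg\to\End(V)$, and the term of $d\eta_i$ that is linear in the $v_j^*$ reproduces the differential $x_a\mapsto\rho(x_a)^i{}_j v_i v_j^*$ of $l_{V/G}$ --- this last term appearing because the relative group element $e^X$ acts on $\eta$ by the inhomogeneous transformation $\eta\mapsto\eta+X\cdot v+X\cdot\eta$, whose $\eta$-constant part is exactly the infinitesimal action. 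Everything here is manifestly $G$-equivariant. An equivalent structural route: the smooth morphism $[V/G]\to BG$ gives a fibre sequence of DG Lie algebras $T_{[V/G]/BG}[-1]\to T_{[V/G]}[-1]\to \pi^*T_{BG}[-1]$, in which $\pi^*T_{BG}[-1]=\CO_{[V/G]}\otimes\fg$ with its $\CO$-linear bracket, the ideal $T_{[V/G]/BG}[-1]=\CO_{[V/G]}\otimes V[-1]$ is abelian (being the base change of the tangent Lie algebra of affine space, which has vanishing Atiyah class), the adjoint action of the quotient on the ideal is the representation $V$, and the connecting differential is the infinitesimal action map; reassembled, this is $l_{V/G}$.

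The main obstacle, in either route, is to pin down the Lie bracket: to show that $l_{V/G}$ is a \emph{strict} DG Lie algebra with exactly the stated $\CO$-linear bracket and no higher $L_\infty$-operations or curvature corrections. In the direct approach this reduces to the observation that the differential on $\CO_{\wh{[V/G]\times[V/G]}}$ is at most quadratic in the fibre generators $\eta_i,\theta^a$, with coefficients at most linear in the $v_j^*$, which in turn follows from $V$ being a \emph{linear} representation together with the flatness of the trivial connection on $T_V$. In the structural approach it amounts to verifying that the connecting map of the fibre sequence is precisely the action map with no corrections and that the extension splits as graded objects. Once the isomorphism of formal stacks is in hand, the remaining points --- compatibility with the diagonal section, $G$-equivariance, and the comparison of the two module actions --- are formal.
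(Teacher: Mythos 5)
Your proposal sits on the Koszul-dual side of the abstract equivalence: the paper uses the $\mathrm{Gr}_X^f$ leg (compute the formal loop group and its group law), whereas you go through the $\mathrm{PSt}_X^f$ leg (represent $\wh{[V/G]\times[V/G]}$ by a Chevalley--Eilenberg algebra), in the style of Lemma~\ref{Lem:affineTangent}. Concretely, the paper base-changes to the affine atlas $V$, forms $Z=V\times_{\wh{V\times V}}(\wh{G}\times V)$ as an honest affine formal scheme, reads $\C[Z]\cong U(l_{V/G})'$ off the Cartesian square, and then computes the formal group law directly, identifying it with that of $V[-1]\rtimes\wh{G}$ via Lemma~\ref{Lem:copV}. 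Strict associativity --- and hence the absence of higher $L_\infty$-operations --- then comes for free, since $V[-1]\rtimes\wh{G}$ is an ordinary formal group. Your fibre-sequence route via $[V/G]\to BG$ is a pleasant structural alternative that makes the semidirect-product shape of $l_{V/G}$ visible at once.

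The place where your sketch is not yet a proof is exactly the point you flag but then waive: you never actually produce a $G$-equivariant DG-algebra model for $\CO_{\wh{[V/G]\times[V/G]}}$ with its differential and verify that it equals $\mathrm{CE}^*(l_{V/G})$; you posit the answer and then argue its differential must be quadratic "by linearity and flatness," which is an assertion rather than a derivation. Likewise, in the fibre-sequence route, the claim that the $L_\infty$-extension of $\CO\otimes\fg$ by $\CO\otimes V[-1]$ has no higher brackets is precisely what needs a witness. The paper resolves this by computing over $V$, where the differential $dv_i^*[1]=\rho_i-v_i^*$ and the coproduct of Lemma~\ref{Lem:copV} are explicit commutative-algebra facts, and only afterwards quotients by $G$. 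Folding that move into your argument --- compute upstairs on the atlas, then equivariantize --- is what would close the gap.
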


We first show the following lemma.

\begin{Lem}\label{Lem:copV}
Let $V[-1]\rtimes \wh{G}$ be the formal group defined by the semi-direct product of $\wh{G}$ with $V[-1]$. The formal group law induces the following co-product on $V^*[1]$:
\be
\Delta v_i^*[1]= v_i^*[1]\otimes 1+\rho_{ij}\otimes v_j^*[1],
\ee
where $\rho_{ij}$ is the matrix element of the action of $\wh{G}$ on $V$. 

\end{Lem}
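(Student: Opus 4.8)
The plan is to read the co-product off the semi-direct product group law. First I would fix a model for the coordinate ring: the formal group $V[-1]\rtimes\wh{G}$ is pro-represented by the graded-commutative DG algebra $\Sym(V^*[1])\otimes\widehat{\C[G]}$, where $\widehat{\C[G]}$ is the completion of $\C[G]$ at the identity and $V^*[1]$, placed in cohomological degree $1$, is the space of linear functions on $V[-1]$. On an affine test scheme $T$ a point is a pair $(v,g)$ with $v\in V[-1](T)$, $g\in\wh{G}(T)$, and the multiplication is
\[
m\big((v,g),(v',g')\big)=\big(v+\rho(g)\,v',\ gg'\big),
\]
where $\rho(g)\in\mathrm{GL}(V\otimes\CO_T)$ is the action of $g$. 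This is associative with unit $(0,e)$, so dualizing $m$ equips $\Sym(V^*[1])\otimes\widehat{\C[G]}$ with a co-product.

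Next I would evaluate $m^*$ on the generators $v_i^*[1]\in V^*[1]$. Applied to a pair $\big((v,g),(v',g')\big)$, the function $m^*(v_i^*[1])$ returns the $i$-th coordinate of $v+\rho(g)v'$, namely
\[
\langle v_i^*,\,v\rangle+\langle v_i^*,\,\rho(g)\,v'\rangle=\langle v_i^*,\,v\rangle+\sum_j\rho_{ij}(g)\,\langle v_j^*,\,v'\rangle,
\]
with $\rho_{ij}\in\widehat{\C[G]}$ the $(i,j)$ matrix coefficient. Since a function on the formal group is determined by its values on all such test points, this gives exactly
\[
\Delta\, v_i^*[1]=v_i^*[1]\otimes 1+\sum_j\rho_{ij}\otimes v_j^*[1],
\]
and no Koszul sign appears, $\rho_{ij}$ being of degree $0$ and $v_j^*[1]$ of degree $1$. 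Heuristically, for the bare abelian formal group $V[-1]$ the linear functions are primitive, $\Delta v_i^*[1]=v_i^*[1]\otimes 1+1\otimes v_i^*[1]$; passing to the semi-direct product simply twists the second tensor factor by the co-action $v_i^*\mapsto\sum_j\rho_{ij}\otimes v_j^*$ of $\wh{G}$ on $V^*$.

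The only thing to be careful about is the formal bookkeeping: that $V[-1]\rtimes\wh{G}$ really is an object of the category of formal groups — the projection to the point is an inf-schematic nil-isomorphism with section, which is immediate since $(V[-1]\rtimes\wh{G})^{\mathrm{red}}=\mathrm{pt}$ and $V[-1]\rtimes\wh{G}$ is a filtered colimit of Artinian affine DG schemes — and that the displayed formula is co-associative and counital, which is just the associativity and unitality of $m$ noted above. I expect this to be routine; the content of the lemma is the one-line pull-back. An alternative, purely formal, route is to \emph{define} $\Sym(V^*[1])\otimes\widehat{\C[G]}$ with the stated co-product and check directly that it pro-represents $V[-1]\rtimes\wh{G}$, reducing the lemma to a definition-chase.
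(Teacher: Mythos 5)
Your argument is correct and matches the paper's: the paper's entire proof is to write down the semi-direct product group law $(v_1,g_1)\cdot(v_2,g_2)=(v_1+\rho(g_1)v_2,\,g_1g_2)$ and observe the co-product is its dual. You spell out the dualization and the sign bookkeeping in more detail, but the key step is identical.
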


\begin{proof}
This is a simple consequence of the following formal group law:
\be
(v_1, g_1)\cdot (v_2, g_2)=(v_1+\rho(g_1)v_2, g_1g_2). 
\ee

\end{proof}

\begin{proof}[Proof of Proposition \ref{Prop:TVG}]

The tangent Lie algebra $T_{[V/G]}[-1]$ is by definition the Lie algebra associated to the formal group $[Z/G]$, where $Z$ is defined by the following Cartesian diagram:
\be
\btik
Z\rar\dar & V\dar{\Delta}\\
\widehat{G}\times V\rar & \widehat{V\times V}
\etik
\ee
Here $\wh{G}$ is the formal completion of $G$ at identity and $\wh{X\times X}$ is the formal completion of $X\times X$ along diagonal. The formal scheme $Z$ is represented by the following very explicit DG algebra:
\be
\C[Z]=\C[V]\otimes \mathrm{Sym}(V^*[1])\otimes \C[\widehat{G}],
\ee
where the differential is given by $dv_i^*[1]=\rho_i-v_i^*$, where $\rho_i$ is the image of $v_i^*$ under the map $\rho: V^*\to V^*\otimes \C[G]$. The underlying algebra is easily identified with the algebra of functions on $V\times (V[-1] \rtimes \wh{G})$, and the differential is setting the equation $gv_i-v_i=0$. We immediately see from the definition of $l_{V/G}$ that one can identify the filtered dual $U(l_{V/G})'$ with functions on $Z$. We are left to compute the formal group law. 

Note that the formal group law on $Z$ is given by the following multiplication:
\be
Z\times_V Z\to Z: (g_1, v)\times (g_2, v)\to (g_1g_2, v)
\ee
which makes sense because $g_1v=v, g_2v=v$ implies that $g_1g_2v=v$. The induced co-product on $\C[\wh{G}]$ is clearly dual to the formal group law of $\wh{G}$, we only need to compute the co-product on $v_i^*[1]$. Note that the equation $g_1g_2v-v=0$ can be re-written into:
\be
0=g_1g_2v-v=g_1(g_2v-v)+g_1v-v,
\ee
and therefore the co-product on $v_i^*[1]$ representing the formal group law is given by:
\be
v_i^*[1]\mapsto v_i^*[1]\otimes 1+\sum_j \rho_{ij}\otimes v_j^*[1].
\ee
Indeed, we have the following identity:
\be
\begin{aligned}
d(v_i^*[1]\otimes 1+\sum_j \rho_{ij}\otimes v_j^*[1])(g_1, g_2, v)&=(v_i^*, g_1v-v)+\rho_{ij}(g_1)(v_j^*, g_2v-v)\\ &=(v_i^*, g_1v-v)+(v_i^*, g_1(g_2v-v))\\&=(v_i^*, g_1g_2v-v),
\end{aligned}
\ee
and the last is equal to the co-product of $dv_i^*[1]$. Using Lemma \ref{Lem:copV}, one sees that the above is precisely the formal group law of $V[-1]\rtimes \wh{G}$. Since this formal group can be represented by $U(l_{V/G})'$, we conclude that the following diagram of DG algebras commute:
\be
\btik
U(l_{V/G})'\otimes_{\C[V]} U(l_{V/G})' \dar{\cong}&\lar  U(l_{V/G})'\dar{\cong}\\
\C[Z\times_V Z] & \lar \C[Z]
\etik
\ee
There are two consequences of this. First of all, the formal group $Z$ has a formal group law that is associative without higher homotopy. Secondly, it is represented by $U(l_{V/G})'$ as a commutative Hopf algebra, from which it is then by definition that $\CL_X([Z/G])$ is precisely $l_{V/G}$. This completes the proof. 

\end{proof}

Consider now the fiber of $f: V\to W$ at $0$. We have the following fiber product:
\be
\btik
{[f^{-1}(0)/G] }\rar \dar& {[\mathrm{pt}/G] }\dar\\
{[V/G]}\rar & {[W/G]}
\etik
\ee
and therefore, it is clear that the tangent Lie algebra of $[f^{-1}(0)/G]$ fits into the following fiber product:
\be
\btik
T_{[f^{-1}(0)/G]}[-1]\rar\dar & \fg\dar\\
T_{[V/G]}[-1]\rar & T_{[W/G]}[-1]
\etik
\ee
Here we pull-back all the tangent Lie algebras over $f^{-1}(0)$. Since $T_{[V/G]}[-1], T_{[W/G]}[-1]$ and $\fg$ are all semi-direct products with $\fg$, we find that the resulting DG Lie algebra is the semi-direct product of $\fg$ with $T_{f^{-1}(0)}[-1]$, together with the natural differential $\fg\to T_{f^{-1}(0)}[-1]$. 

Let us now consider the following $L_\infty$ algebra $l_{f^{-1}(0)}$ on $f^{-1}(0)$. As a sheaf it is given by $\C[f^{-1}(0)]\otimes (V[-1]\oplus W[-2])$. Let $\nabla^n f$ be the $n$-th total derivative of $f$ with respect to the given basis of $V$ and $W$, and we think of this as an element in $\mathrm{Sym}^n (V^*[1])\otimes W[-2]$, which we view as an element in $\mathrm{Sym}^n (l^*)[2-n]\otimes l$. 

\begin{Lem}\label{Lem:Tf}
This gives $l_{f^{-1}(0)}$ the structure of an $L_\infty$ algebra, and we can identify $l_{f^{-1}(0)}$ with the tangent Lie algebra of $f^{-1}(0)$.

\end{Lem}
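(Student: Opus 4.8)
The plan is to realise $f^{-1}(0)$ as the spectrum of a Chevalley--Eilenberg complex and then invoke Lemma~\ref{Lem:affineTangent}. First I would set $\mathfrak{l}:=V[-1]\oplus W[-2]$, a graded vector space with a $G$-action, and equip it with the symmetric brackets $l_n\colon \Sym^n(V[-1])\to W[-2]$ given by the $n$-th Taylor coefficient $\nabla^n f$ of $f$ at the origin, declaring every bracket that has an input in $W[-2]$, or that lands in $V[-1]$, to be zero. As a map $\Sym^n\mathfrak l\to\mathfrak l$ each $l_n$ has the correct degree $2-n$, and all the $l_n$ are $G$-equivariant since $f$ is. With the adjoint action of $\mathfrak l$ on itself, the Lie algebra object $\C[f^{-1}(0)]\otimes\mathfrak l$ attached to $\mathfrak l$ in the sense of Lemma~\ref{Lem:affineTangent} is then exactly $l_{f^{-1}(0)}$, so it suffices to prove $f^{-1}(0)=\mathrm{Spec}(\mathrm{CE}^*(\mathfrak l))$ and that $\mathfrak l$ is an $L_\infty$ algebra.

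The $L_\infty$ axioms come for free: $W[-2]$ is a central graded ideal of $\mathfrak l$ on which every bracket vanishes, and $\mathfrak l/W[-2]\cong V[-1]$ carries only zero brackets, so in each generalized Jacobi relation $\sum_{i+j=N+1}\pm\, l_j(l_i(\cdots),\cdots)=0$ the (possibly nonzero) output of $l_i$ lies in $W[-2]$ and appears as an input of $l_j$, which kills any $W[-2]$-input; hence every term is zero and $\mathfrak l$ is a finite-dimensional $L_\infty$ algebra concentrated in degrees $1$ and $2$. Next I would identify $\mathrm{CE}^*(\mathfrak l)$ with the Koszul model of $\C[f^{-1}(0)]$ from Remark~\ref{Rem:Cmu}: as a graded algebra $\mathrm{CE}^*(\mathfrak l)=\Sym(\mathfrak l^*[-1])=\C[V]\otimes\Lambda^\bullet W^*$ with $W^*$ in cohomological degree $-1$, and the Chevalley--Eilenberg differential, being dual to $\sum_{n\ge 1}l_n$, sends a generator $w_i^*\in W^*$ to $\sum_{n\ge1}(\nabla^n f)_i$, the Taylor series of the $i$-th component of $f$, i.e. to the polynomial $f_i\in\C[V]$. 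This is precisely the derived zero locus, compatibly with the projection to $\C[V]$, so Lemma~\ref{Lem:affineTangent} yields $T_{f^{-1}(0)}[-1]\cong \mathrm{CE}^*(\mathfrak l)\otimes\mathfrak l=l_{f^{-1}(0)}$; since the functors $\Omega_X$, $\CL_X$, $\mathrm{Exp}_X$ and the group law on $\widehat{f^{-1}(0)\times f^{-1}(0)}$ are all manifestly $G$-equivariant, this is an isomorphism of $G$-equivariant $L_\infty$ algebras, which is what the following fiber-product computation requires.

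The step I expect to be the main obstacle is purely the degree-shift bookkeeping: one must check carefully that, in the paper's conventions, $\mathfrak l^*[-1]$ places $V^*$ in degree $0$ and $W^*$ in degree $-1$, so that $\mathrm{CE}^*(\mathfrak l)$ lands in non-positive degrees and matches the Koszul complex on the nose rather than a shift of it, and that the higher components $\nabla^{\ge 2}f$ of the CE differential are exactly what assemble $w_i^*\mapsto f_i$ with the correct signs. Once this is pinned down the rest is formal; alternatively, one can bypass Lemma~\ref{Lem:affineTangent} altogether and repeat the direct formal-group argument of Proposition~\ref{Prop:TVG}, writing $\widehat{f^{-1}(0)\times f^{-1}(0)}$ explicitly over $f^{-1}(0)$ (with coordinates $v_i$, the shifted $v_i^*[1]$, and the Koszul generators $w_i^*$) and reading off that its multiplication is dual to the co-product on the filtered dual $U(l_{f^{-1}(0)})'$.
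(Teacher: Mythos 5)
Your proof is correct, and it reaches the same bottom line by a slightly repackaged route. The paper works directly over $\CO_{f^{-1}(0)}$: it defines $D$ dual to $\sum_n\nabla^n f$, observes $D^2=0$, identifies $\mathrm{CE}^*(l_{f^{-1}(0)})$ with $\C[\widehat{f^{-1}(0)\times f^{-1}(0)}]$, and takes loop space. You instead first build a single constant $L_\infty$ algebra $\mathfrak l=V[-1]\oplus W[-2]$ with brackets the Taylor coefficients at the origin, verify the Jacobi relations by the two-step nilpotency argument (a cleaner way to see what the paper compresses into ``$D^2=0$''), identify $\mathrm{CE}^*(\mathfrak l)$ with the Koszul model of $\C[f^{-1}(0)]$, and then invoke Lemma~\ref{Lem:affineTangent}. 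This is a nice reuse of the earlier lemma: it makes the formal-group argument happen once rather than twice.

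One point worth making explicit, since as written it reads as a definitional coincidence: the brackets of your $\mathfrak l$ are $\nabla^n f(0)$ (constant), while the brackets of the paper's $l_{f^{-1}(0)}$ are $\nabla^n f(v)$ (varying over the base). The reason ``$\C[f^{-1}(0)]\otimes\mathfrak l$ in the sense of Lemma~\ref{Lem:affineTangent}'' is \emph{exactly} $l_{f^{-1}(0)}$ is that the identification $\mathrm{CE}^*(l_X)\cong\mathrm{CE}^*(\mathfrak l)\otimes\mathrm{CE}^*(\mathfrak l)$ in that lemma, via $x_a\otimes 1\mapsto x_a$ and $1\otimes x_a-x_a\otimes 1\mapsto dx_a$, produces on $l_X$ the differential and brackets coming from Taylor expanding $f$ around the varying base point: $d(dw_i^*)=f_i(v+dv)-f_i(v)=\sum_n\frac{1}{n!}\nabla^n f_i(v)(dv)^n$. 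That is, the curvature inherent in the Lemma~\ref{Lem:affineTangent} construction upgrades the constant brackets of $\mathfrak l$ to the $v$-dependent ones of $l_{f^{-1}(0)}$; I would insert a sentence to this effect so the identification does not look like an unexamined assertion. Your degree-shift caution in the last paragraph is well placed but resolves as you expect.
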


\begin{proof}

To show that this indeed is an $L_\infty$ structure, we need to show that the dual of $\sum_n\nabla^nf$, which we denote by $D$, defines a square zero differential on $\mathrm{Sym}^*\lp l^\vee[-1]\rp$. As a sheaf. this is identified with:
\be
\C[f^{-1}(0)]\otimes \mathrm{Sym}(V^*)\otimes \mathrm{Sym}(W^*[1]),
\ee
such that the map $D$ is given on generators by:
\be
D(w_i^*[1])=\sum \nabla^n f_i(v)  (v^*)^n=f_i(v+v^*)-f_i(v),\qquad D \mathrm{Sym}(V^*)=0.
\ee
It is clear now that it is a differential that square to zero. Moreover, we see that the Chevalley-Eilenberg algebra $\mathrm{CE}(l_{f^{-1}(0)})$ is precisely functions on the formal completion of $f^{-1}(0)\times f^{-1}(0)$ along the diagonal. Therefore by taking loop space we get the formal group corresponding to $l_{f^{-1}(0)}$. This shows that $l_{f^{-1}(0)}$ is the tangent Lie algebra of $f^{-1}(0)$. 
\end{proof}

Consequently, we see that the tangent Lie algebra of $[f^{-1}(0)/G]$ is represented by the following complex:
\be
\C[f^{-1}(0)]\otimes \fg\to \C[f^{-1}(0)]\otimes V[-1]\to \C[f^{-1}(0)]\otimes W[-2],
\ee
where the first differential is given by mapping $\fg$ to global vector fields, the second is given by $f_*$. The Lie algebra structure is given by the smashed product of $\fg$ with $\C[f^{-1}(0)]\otimes (V[-1]\oplus W[-2])$ whose $L_\infty$ algebra structure is given by $\nabla^n f$. 

We finish this section with the following comments. The tangent Lie algebra $T_{[f^{-1}(0)/G]}$ fits into the following exact sequence of DG Lie algebras:
\be
\btik
T_{f^{-1}(0)}\rar & T_{[f^{-1}(0)/G]}\rar & \CO\otimes \fg
\etik
\ee
as the underlying Lie algebra is a smashed product. This has a very geometric interpretation. Denote by $X:=f^{-1}(0)$ and $\CX:=[f^{-1}(0)/G]$, then the formal loop group of $\CX$ is represented by $[Z/G]$, where $Z$ is defined by the following Cartesian diagram, similar to Proposition \ref{Prop:TVG}:
\be
\btik
Z\rar\dar & X\dar{\Delta}\\
\widehat{G}\times X\rar & \widehat{X\times X}
\etik
\ee
The loop group of $X$, namely $\Omega_X\wh{X\times X}$ maps into $Z$ via the following extension of this diagram:
 \be
\btik
\Omega_X\wh{X\times X}\rar\dar & Z\rar\dar & X\dar{\Delta}\\
e\times X\rar &\widehat{G}\times X\rar & \widehat{X\times X}
\etik
\ee
The cone of this map is precisely the formal group $\widehat{G}\times X$. 

This geometry helps us understand the action of $T_\CX[-1]$ on an object in $\Coh(\CX)$ in the following way. Given a module of $T_\CX[-1]$, one can always first restrict to a module of $T_X[-1]$. Since in our situation this is a DG Lie algebra concentrated in positive degrees, the functor of taking Chevalley-Eilenberg cohomology is an equivalence. We conclude that the structure of a module of $T_X[-1]$ is equivalent to the structure of a module of $\mathrm{CE}^*_X (T_X[-1])$, via the functor:
\be
M\to \mathrm{CE}^*_X (T_X[-1])\otimes_X M. 
\ee
This functor is identified with the functor $\pi_*\pi^*$ where $\pi: \wh{X\times X}\to X$. The invariants $\pi_*\pi^*(M)$ is now a module of $\CO_X\otimes \fg$ due to the above short exact sequence, and this action of $\fg$ is simply induced from the $G$-equivariant structure on $\pi_*\pi^*(M)$. These two structures uniquely determines the action of $T_\CX[-1]$ on $M$. We summarize this into the following lemma, which will be useful in the proof of Proposition \ref{Prop:CFaction}. 

\begin{Lem}\label{Lem:SESmodule}

Let $X=f^{-1}(0)$ and $\CX=[f^{-1}(0)/G]$. For any $M\in \Coh(\CX)$, the canonical action:
\be
T_\CX[-1]\otimes M\to M
\ee
is uniquely determined by the following two data:
\begin{enumerate}

\item The action morphism $T_X[-1]\otimes M\to M$, or equivalently the object $\pi^*M$ as a sheaf over $\wh{X\times X}$. 

\item The action of $\wh{G}$ on $\pi_*\pi^*(M)$. 

\end{enumerate}

\end{Lem}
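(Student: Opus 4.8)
The plan is to upgrade the heuristic discussion immediately preceding the statement into an honest chain of identifications, so that the canonical $T_\CX[-1]$-action on $M$ becomes literally recoverable from the two listed pieces of data. First I would recall that, by the definition of the canonical action given in Section~\ref{subsec:symquotangent}, a $T_\CX[-1]$-module structure on $M$ is the same datum as an action of the monad $q_*q^{!}$, where $q\colon \Omega_\CX(\wh{\CX\times\CX})\to\CX$ is the formal loop group; and by the Cartesian square defining $Z$ we have $\Omega_\CX(\wh{\CX\times\CX})\simeq [Z/G]$, with the projection down to $\CX=[X/G]$ obtained from $\wt q\colon Z\to X$ by $G$-equivariantization. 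So a $T_\CX[-1]$-module on $\CX$ is the same as a $G$-equivariant module over $X$ for the formal group $Z$.

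Next I would use the extension of formal groups over $X$
\[
\Omega_X\wh{X\times X}\longrightarrow Z\longrightarrow \wh G\times X,
\]
established just above: $\Omega_X\wh{X\times X}$ is the (normal) kernel of the homomorphism $Z\to\wh G\times X$, realized as the fiber over the unit section, and $\wh G\times X$ is its cokernel (the ``cone''). A module over $Z$ then decomposes, by a Hopf-algebroid descent argument, into a module over the normal subgroup $\Omega_X\wh{X\times X}$ equipped with an equivariance structure for the residual action of $\wh G\times X$. By Lemma~\ref{Lem:Tf} the first factor is a module over $T_X[-1]$; since $T_X[-1]$ is concentrated in strictly positive cohomological degrees, $\mathrm{CE}^*_X$ is an equivalence onto comodules, so this is exactly the sheaf $\pi^*M$ on $\wh{X\times X}$ with no further data (equivalently its invariants $\pi_*\pi^*M$ as a $\mathrm{CE}^*_X(T_X[-1])$-module), which is datum~(1). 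Combining the residual $\wh G\times X$-equivariance with the $G$-equivariance retained from the first step, what is left is precisely a $\wh G$-action on $\pi_*\pi^*M$ compatible with the $T_X[-1]$-structure, which is datum~(2). Reading this chain of identifications backwards shows that (1) and (2) determine the $q_*q^{!}$-action on $M$, hence the canonical $T_\CX[-1]$-action; in particular two $T_\CX[-1]$-module structures on $M$ inducing the same pair $(1),(2)$ agree.

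The hard part will be the descent step: making precise that modules over the formal group $Z$ are the same as $\wh G$-equivariant modules over its normal subgroup $\Omega_X\wh{X\times X}$, with all higher coherences under control. What makes this manageable is exactly the positivity of $T_X[-1]$, which removes any hidden $A_\infty$ ambiguity on the $\Omega_X\wh{X\times X}$-side and lets one model everything through the explicit commutative DG Hopf algebras and Chevalley--Eilenberg complexes constructed in Section~\ref{subsubsec:tangentfiber}; the $\wh G$-side is classical, and its action on $\pi_*\pi^*M$ is merely the infinitesimal shadow of the algebraic $G$-equivariant structure that $M$ already carries. Granting these two reductions, the remainder is the argument already sketched in the paragraph before the lemma, now read as a statement about the space of module structures on the fixed object $M$ rather than as motivation.
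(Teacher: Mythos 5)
Your proposal is correct and follows essentially the same route as the paper. The paper does not in fact give a separate proof environment for this lemma: it is stated as a summary of the discussion in the paragraph immediately preceding it, which already contains the two key observations you isolate — (i) the Chevalley–Eilenberg functor $M\mapsto \mathrm{CE}^*_X(T_X[-1])\otimes M = \pi_*\pi^*M$ is an equivalence because $T_X[-1]$ is concentrated in strictly positive degrees, so the $T_X[-1]$-module structure is recoverable from $\pi^*M$; and (ii) the residual $\fg$-action on $\pi_*\pi^*M$ that comes from the short exact sequence $T_X[-1]\to T_\CX[-1]\to\CO_X\otimes\fg$ is nothing but the derivative of the $G$-equivariant structure that $M$ already carries. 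You phrase the same argument in terms of the exact sequence of formal groups $\Omega_X\wh{X\times X}\to Z\to\wh G\times X$ and ``Hopf-algebroid descent'', where the paper works at the level of DG Lie algebras; this is the same decomposition translated across the Lie algebra/formal group equivalence, and your observation that the sequence splits (it is a semi-direct product, as established just before the lemma for the linear case and extended to $f^{-1}(0)$) together with the CE equivalence is exactly what the paper leans on. The one place I would tighten your write-up is the phrase ``module over the normal subgroup equipped with an equivariance structure for the residual action'': this is only unambiguous because the sequence splits as a semi-direct product and the CE functor is an equivalence — the latter is precisely what guarantees that recording the $\wh G$-action on $\pi_*\pi^*M$ (rather than on $M$) loses no information. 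You acknowledge this, so the proposal stands.
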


\subsubsection{Application to symplectic reduction}

We now apply this to $\mu: T^*V\to \fg^*$. We start with the following simple observation. Recall that $\tgv^+$ is the subalgebra of $\tgv$ generated by elements in positive degrees. 

\begin{Lem}\label{Lem:Cmu=CE}
There is a quasi-isomorphism of DG algebras:
\be
\C[\mu^{-1}(0)]\cong \mathrm{CE}^*(\tgv^+).
\ee

\end{Lem}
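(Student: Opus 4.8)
The plan is to make both sides of the claimed quasi-isomorphism completely explicit and to observe that, after an obvious renaming of generators, they are literally the same commutative differential graded algebra; in the flat case this cdga is then a Koszul resolution of $\C[T^*V]/(\mu_a)$, so ``quasi-isomorphism'' is the correct phrasing.

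First I would unwind $\tgv^+$. By Definition \ref{def:grTakiff} we have $\tgv=\fg\oplus(V\oplus V^*)[-1]\oplus\fg^*[-2]$, so the subalgebra generated by the positive-degree elements is $\tgv^+=(V\oplus V^*)[-1]\oplus\fg^*[-2]$ as a graded vector space. Among the generators of $\tgv^+$ the only nonvanishing bracket is the pairing $[\psi_+^i,\psi_{-,j}]=\sum_a\rho^i{}_j(x_a)\,t^a$ landing in $\fg^*[-2]$; the brackets $[\psi_+,\psi_+]$, $[\psi_-,\psi_-]$ and $[\fg^*,-]$ all vanish inside $\tgv^+$. Hence $\tgv^+$ is a two-step nilpotent graded Lie algebra (really a $2$-term $L_\infty$ algebra with trivial higher brackets): degree-one part $V\oplus V^*$, degree-two part $\fg^*$, and bracket the $G$-equivariant symmetric map $b\colon\Sym^2(V\oplus V^*)\to\fg^*$ recording the moment-map pairing — note $V\oplus V^*$ sits in odd degree, so $\Lambda^2$ of it is $\Sym^2$ of the underlying space.

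Next I would write out $\mathrm{CE}^*(\tgv^+)=\Sym^*\big((\tgv^+)^\vee[-1]\big)$. Dualizing and shifting, the generators are $(V\oplus V^*)^\vee=V^*\oplus V$ in degree $0$ together with $\fg$ in degree $-1$, so
\be
\mathrm{CE}^*(\tgv^+)\;=\;\C[T^*V]\otimes\Lambda^\bullet\fg,\qquad \fg \text{ in degree } -1,
\ee
as a graded-commutative algebra. Since the only bracket component of $\tgv^+$ is $b$, the Chevalley--Eilenberg differential vanishes on $\C[T^*V]$ and sends a generator $x_a\in\fg$ to the dual of $b$, i.e. to the quadratic function $\sum_{i,j}\rho^i{}_j(x_a)$ paired with the appropriate coordinate functions on $T^*V$; unwinding $\langle dx_a,\psi_+^i\cdot\psi_{-,j}\rangle=\langle x_a,[\psi_+^i,\psi_{-,j}]\rangle=\rho^i{}_j(x_a)$ identifies this exactly with the component $\mu_a=(x_av,v^*)$ of the moment map.

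Finally I would compare with the explicit cdga model of $\C[\mu^{-1}(0)]$ recorded in Remark \ref{Rem:Cmu}: that algebra has underlying graded algebra $\C[T^*V]\otimes\C[\fg^*[-1]]=\C[T^*V]\otimes\Lambda^\bullet\fg$, with degree $-1$ coordinates $c_a$ on $\fg^*[-1]$ and differential $dc_a=(x_av,v^*)$. Matching $c_a\leftrightarrow x_a$ identifies the two cdgas verbatim, yielding the desired quasi-isomorphism; the identification is manifestly $G$-equivariant. There is no real obstacle here — the statement is essentially a bookkeeping exercise — but two points deserve care: keeping the several gradings straight (the cohomological degree on $\tgv$ versus the $\C^\times$-weight introduced in Section \ref{sec: RWtangent}), and, when $\mu$ fails to be flat, recalling that $\mu^{-1}(0)$ is by convention the derived zero locus, for which the Koszul cdga above is precisely the standard representative, so the comparison is an honest isomorphism in that case and a quasi-isomorphism onto $\C[T^*V]/(\mu_a)$ when $\mu$ is flat.
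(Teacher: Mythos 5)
Your proposal is correct and follows essentially the same route as the paper's own proof: identify the underlying graded-commutative algebra of $\mathrm{CE}^*(\tgv^+)$ with $\C[T^*V\oplus\fg^*[-1]]$, observe that the only nontrivial bracket in $\tgv^+$ is $[\psi_+^i,\psi_{-,j}]=\sum_a\rho^i{}_j(x_a)t^a$ so that the CE differential acts only on the $\fg^*[-1]$ coordinates by $dc_a=(x_av,v^*)$, and match against the explicit model of $\C[\mu^{-1}(0)]$ in Remark \ref{Rem:Cmu}. Your added remarks about the derived zero locus and the Koszul-resolution interpretation in the flat case are accurate, just more explicit than the paper chose to be.
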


\begin{proof}

The underlying commutative algebra of $\mathrm{CE}^*(\tgv^+)$ is the algebra $\mathrm{Sym}((\tgv^+)^*[-1])$, which is identified with functions on $T^*V\oplus \fg^*[-1]$. The Chevalley-Eilenberg differential is non-trivial only on $\C[\fg^*[-1]]$ (since the only non-trivial commutator here is $\{\psi_+^i,\psi_-^j\}=\sum \rho^{ij}(x_a)t^a$), we see that the Chevalley-Eilenberg differential is given on generators of $\fg^*[-1]$ by:
\be
dc_a=(x_av, v^*).
\ee
This coincides with $\C[\mu^{-1}(0)]$, as in Remark \ref{Rem:Cmu}.
\end{proof}

Let us treat now $\tgv$ as a module of $\tgv^+$ via conjugation, and consider the following object:
\be
l_\CM:= \mathrm{CE}^*(\tgv^+)\otimes \tgv,
\ee
as a DG module of $ \mathrm{CE}^*(\tgv^+)=\C[\mu^{-1}(0)]$. Clearly the Lie bracket of $\tgv$ is compatible with the differential and therefore this is a sheaf of Lie algebra over $\mu^{-1}(0)$. Moreover, it has a canonical $G$-equivariant structure, making it into a Lie algebra object in $\Coh (\CM)$. Explicitly, the differential is given by:
\be
dx_a=\sum_i\rho^{i}{}_j(x_a)(v_i^* \psi^{j}_+-v_j\psi^{i}_-),\qquad  d\psi^{i}_-=\sum_a \rho^j{}_{i}(x_a)t^av_j^*,\qquad d\psi^{i}_+=\sum_{a}\rho^i{}_{j}(x_a)t^a v_j.
\ee
Here $\{v_i^*\}$ is a set of coordinates on $V$ and $\{v_i\}$ is a set of coordinates on $V^*$. We claim:

\begin{Prop}\label{Prop:TM}
We can identify the tangent Lie algebra of $[\CM]$ with $l_\CM$. Under this identification, the Poisson form on $[\CM]$ is given by the quadratic form $\kappa$ on $\tgv$. 
\end{Prop}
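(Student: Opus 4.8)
The plan is to apply the fiber-product computation of Section \ref{subsubsec:tangentfiber} directly to the map $\mu\colon T^*V\to \fg^*$, viewing $\fg^*$ as a linear $G$-representation and $T^*V=V\oplus V^*$ as another. First I would invoke Lemma \ref{Lem:Tf} (or its consequence stated just after it) with $f=\mu$: this gives that $T_{[\mu^{-1}(0)/G]}[-1]$ is represented by the three-term complex $\C[\mu^{-1}(0)]\otimes\fg \to \C[\mu^{-1}(0)]\otimes (V\oplus V^*)[-1]\to \C[\mu^{-1}(0)]\otimes\fg^*[-2]$, where the first arrow sends $\fg$ to the global vector fields generated by the $G$-action on $T^*V$ and the second is $\mu_* = d\mu$. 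The $L_\infty$-structure on the fiber part $T_{\mu^{-1}(0)}[-1] = \C[\mu^{-1}(0)]\otimes((V\oplus V^*)[-1]\oplus \fg^*[-2])$ is governed by the total derivatives $\nabla^n\mu$; since $\mu$ is quadratic (bilinear in $v$ and $v^*$), only $\nabla^1\mu$ and $\nabla^2\mu$ are nonzero, so the $L_\infty$-structure is honestly a DG Lie algebra with a bracket coming from $\nabla^2\mu$.

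The second step is to match this with $l_\CM = \mathrm{CE}^*(\tgv^+)\otimes\tgv$. By Lemma \ref{Lem:Cmu=CE} we already have $\mathrm{CE}^*(\tgv^+)\cong\C[\mu^{-1}(0)]$ as DG algebras, so the underlying $\C[\mu^{-1}(0)]$-modules agree: both are $\C[\mu^{-1}(0)]\otimes(\fg\oplus(V\oplus V^*)[-1]\oplus\fg^*[-2])$. What remains is to check that the differentials and brackets coincide. The differential on $l_\CM$ displayed just before the Proposition is $dx_a = \sum\rho^i{}_j(x_a)(v_i^*\psi_+^j - v_j\psi_-^i)$, $d\psi_-^i = \sum\rho^j{}_i(x_a)t^a v_j^*$, $d\psi_+^i = \sum\rho^i{}_j(x_a)t^a v_j$; I would verify these are exactly the two arrows of the fiber-product complex: the $dx_a$ piece is the infinitesimal $G$-action vector field on $T^*V$ (weighted by the coordinates $v_i^*,v_j$ of $\C[T^*V]$), and the $d\psi_\pm$ pieces together give $d\mu$ read off in coordinates, namely the derivative of $\mu^a=(x_a v,v^*)$ with respect to $v$ and $v^*$. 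For the bracket, one checks that the conjugation action of $\tgv^+$ on $\tgv$ used to define $l_\CM$ coincides with the DG Lie bracket from $\nabla^2\mu$ together with the semidirect-product-with-$\fg$ structure identified at the end of Section \ref{subsubsec:tangentfiber} — this is where the relation $[\psi_+^i,\psi_{-,j}]=\sum_a\rho^i{}_j(x_a)t^a$ of $\tgv$ gets matched with the Hessian of $\mu$. I expect this identification to be the main obstacle: it requires being careful with the $\C^\times$-weights, the homological shifts, and the signs in the graded commutator, and with the fact that $\mu^{-1}(0)$ is a derived scheme so one must keep the full DG model (as in Remark \ref{Rem:Cmu}) rather than just its $H^0$.

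Finally, for the symplectic form I would recall that $[\CM]=T^*[V/G]$ is $(-1)$-shifted symplectic by \cite{safronov2016quasi} (cotangent to the $0$-shifted stack $[V/G]$), so $T_{[\CM]}[-1]$ carries the tautological pairing between $T_{[V/G]}$-directions ($\fg\oplus V[-1]$, i.e.\ $\fh$) and its shifted cotangent directions ($\fg^*[-2]\oplus V^*[-1]$, i.e.\ $\fh^*[-2]$). Under the identification above this is precisely the pairing $\langle x_a,t^b\rangle=\delta_a^b$, $\langle\psi_+^i,\psi_{-,j}\rangle=\delta^i_j$, which is the duality pairing defining $\kappa$ on $\tgv=D(\fh)$ in Section \ref{subsec:Takiff}. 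I would check invariance of $\kappa$ under the DG Lie bracket (equivalently, $d$-closedness of the corresponding section $\omega\colon\CO\to T_{[\CM]}[-1]\otimes T_{[\CM]}[-1]$), which follows from the $G$-invariance of the duality pairing and the antisymmetry of the structure constants; this is routine once the bracket has been identified. This completes the identification of both the Lie algebra structure and its quadratic form.
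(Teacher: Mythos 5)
Your proposal is correct and reaches the right conclusion, but it takes a more computational route than the paper at the key step. You plan to identify the brackets on the fiber piece by explicitly comparing the Hessian $\nabla^2\mu$ (from Lemma \ref{Lem:Tf}) with the relation $[\psi_+^i,\psi_{-,j}]=\sum_a\rho^i{}_j(x_a)t^a$ of $\tgv^+$, and you flag this as the main obstacle because of signs, weights, and shifts. The paper sidesteps exactly this check: having established in Lemma \ref{Lem:Cmu=CE} that $\C[\mu^{-1}(0)]\cong\mathrm{CE}^*(\tgv^+)$ with $\tgv^+$ concentrated in positive degrees, it invokes Lemma \ref{Lem:affineTangent} (stated earlier but not used in your plan), which says in general that for an affine variety $X=\mathrm{Spec}(\mathrm{CE}^*(\mathfrak{l}))$ the tangent Lie algebra $T_X[-1]$ is canonically $\mathrm{CE}^*(\mathfrak{l})\otimes\mathfrak{l}$ with the adjoint action. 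This gives $T_{\mu^{-1}(0)}[-1]\cong\mathrm{CE}^*(\tgv^+)\otimes\tgv^+$ without unpacking $\nabla^n\mu$; the semidirect product with $\fg$ then gives $l_\CM$, as you also note. Your plan still works, since the coordinate identification does hold, but it spends effort on a verification the paper makes structural. One small factual slip: $[\CM]=T^*[V/G]$ is $0$-shifted symplectic, not $(-1)$-shifted — the pairing $T_{[\CM]}[-1]\otimes T_{[\CM]}[-1]\to\CO[-2]$ arises by shifting the degree-$0$ form $T_{[\CM]}\otimes T_{[\CM]}\to\CO$ by $[-1]$ in each factor. Your description of the tautological duality pairing matching $\kappa$ is nonetheless correct, and the citation to Safronov is the same one the paper uses.
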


\begin{proof}
Since $\tgv$ is a semi-direct product, we just need to show that the Lie algebra structure on $\tgv^+$ coincides with the one given in Lemma \ref{Lem:Tf}, which follows clearly from  Lemma \ref{Lem:affineTangent}. As for the symplectic form, it is simply a consequence of \cite[Example 2.2.1]{safronov2016quasi}. 

\end{proof}

\subsection{An equivalence of braided tensor categories}

As before, let $U(\tgv^\hbar)\Mod^{gr}_G$ denote the abelian category of $G$-equivariant graded modules of $U(\tgv^\hbar)$ which are finitely generated and flat over $\C\lbb\hbar\rbb$, and by $D^b U(\tgv^\hbar)\Mod^{gr}_G$ its bounded derived category. On the other hand, we define the category $\Perf_{\C^\times}([\CM]\lbb\hbar\rbb)$ as follows. We have seen in Lemma \ref{Lem:Cmu=CE} that $\mu^{-1}(0)$ as an affine variety is represented by $\mathrm{CE}^*(\tgv^+)$. We denote by $\Perf_{\C^\times}([\CM]\lbb\hbar\rbb)$ the full subcategory of the derived category of $G\times \mathbb{C}^\times$-equivariant modules of $\mathrm{CE}^*(\tgv^+)\lbb\hbar\rbb$ generated by objects of the form $\mathrm{CE}^*(\tgv^+)\lbb\hbar\rbb\otimes V$ where $V$ is some finite-dimensional $G$ module. 

The main goal of this section is the proof of the following statement.

\begin{Thm}\label{Thm:tangentBTC}
There is an equivalence of triangulated braided tensor categories:
\be
D^b U(\tgv^\hbar)\Mod^{gr}_G\simeq \Perf_{\C^\times}([\CM]\lbb\hbar\rbb). 
\ee

\end{Thm}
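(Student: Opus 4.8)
The plan is to establish the equivalence in two stages: first as an equivalence of triangulated categories via Koszul duality, then upgrade it to a braided tensor equivalence by identifying the module structures on both sides. For the first stage, I would follow the strategy of \cite{beilinson1996koszul} essentially verbatim. The key input is Lemma \ref{Lem:Cmu=CE}, which identifies $\C[\mu^{-1}(0)]\lbb\hbar\rbb$ with $\mathrm{CE}^*(\tgv^+)\lbb\hbar\rbb$. Combined with the semidirect product decomposition $\tgv = \fg \ltimes \tgv^+$, a $G$-equivariant graded $U(\tgv^\hbar)$-module is the same as a $G$-equivariant graded $U((\tgv^+)^\hbar)$-module whose $\fg$-action integrates. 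The Koszul duality functor $\CF$ would send such a module $M$ to $\mathrm{CE}^*_{\tgv^+}(M) = \mathrm{CE}^*(\tgv^+) \otimes M$ with its Chevalley-Eilenberg differential, viewed as a $G \times \C^\times$-equivariant DG module over $\mathrm{CE}^*(\tgv^+)\lbb\hbar\rbb$. Because $\tgv^+$ is finite-dimensional and concentrated in positive degrees, this functor is exact for the Koszul $t$-structure and the standard argument shows it lands in $\Perf_{\C^\times}([\CM]\lbb\hbar\rbb)$ (the generators $U(\tgv^\hbar) \otimes_{U(\fg)\lbb\hbar\rbb} V$ go to $\mathrm{CE}^*(\tgv^+)\lbb\hbar\rbb \otimes V$) and that it is an equivalence; the inverse is given by Chevalley-Eilenberg chains (or, more carefully following \cite{beilinson1996koszul}, one enlarges to a weighted category so the inverse is defined on the chain level and the weight grading controls convergence).

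For the second stage, I would compare the braided tensor structures. On the quantum group side, by Theorem \ref{Thm:KLequiv} the braided tensor structure on $U(\tgv^\hbar)\Mod^{gr}_G$ is the one coming from $R^\hbar = e^{\hbar\Omega}$ and the KZ associator $\Phi^\hbar_{KZ}$, where $\Omega$ is the symmetrized classical $r$-matrix \eqref{eq:classicalC}. On the geometric side, the braided tensor structure on $\Perf_{\C^\times}([\CM]\lbb\hbar\rbb)$ (Theorem \ref{Thm:BTCRW} and its extension to $[\CM]$) is given by $\tau \circ e^{\hbar\omega}$ with the same KZ associator, where $\omega$ is the Poisson bivector on $T_{[\CM]}[-1]$ built from the symplectic form and the Atiyah/module actions. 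The associators match on both sides because they are the same universal KZ solution, so the only thing to check is that $\CF$ intertwines $e^{\hbar\Omega}$ with $e^{\hbar\omega}$. Since the tensor product functor is the obvious one (derived tensor over $\C\lbb\hbar\rbb$ on one side, derived tensor over $\C[\mu^{-1}(0)]\lbb\hbar\rbb$ on the other, which agree under $\mathrm{CE}^*$), it suffices to check that $\CF$ takes the canonical $T_{[\CM]}[-1]$-action on an object to the $\tgv$-action under the identification $T_{[\CM]}[-1] \cong l_\CM$ of Proposition \ref{Prop:TM}, and that the Poisson bivector $\omega$ corresponds to $\Omega$ under this identification (which is exactly the statement in Proposition \ref{Prop:TM} that $\kappa$ matches the symplectic form).

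The crux is therefore the claim that under $\CF$, the action given by the Atiyah class of the tangent Lie algebra coincides with the $\tgv$-module structure — this is stated as Proposition \ref{Prop:CFaction} in the excerpt's roadmap and is the step I expect to be the main obstacle. I would prove it using Lemma \ref{Lem:SESmodule}: the $T_{[\CM]}[-1]$-action on $M \in \Coh([\CM])$ is determined by (i) the $T_{\mu^{-1}(0)}[-1]$-action, equivalently the structure of $\pi^*M$ as a sheaf on $\widehat{\mu^{-1}(0) \times \mu^{-1}(0)}$, and (ii) the $\widehat{G}$-action on $\pi_*\pi^*M$. Under Koszul duality, datum (i) corresponds precisely to the module structure over $\mathrm{CE}^*(\tgv^+)$, which is how $\CF(M)$ is built, and the differential on $\mathrm{CE}^*(\tgv^+) \otimes M$ encodes the $\tgv^+$-action via conjugation-twisted-by-$M$; datum (ii) is exactly the residual $G$-equivariant structure, which encodes the $\fg = \tgv/\tgv^+$ part of the action. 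Matching these up, together with checking that the differentials on $l_\CM$ written explicitly before Proposition \ref{Prop:TM} agree with the Atiyah class computation, completes the identification. Once the action matches, the morphism $M \otimes N \to M \otimes N$ defining the braiding on the geometric side becomes literally the endomorphism built from $e^{\hbar\Omega}$ acting through the $\tgv$-actions, i.e.\ the quantum group braiding, so $\CF$ is a braided tensor equivalence. The final bookkeeping — that $\CF$ is compatible with duals, gradings, and the $\C^\times$-action, and that $\tgv$ as the adjoint module maps to $l_\CM$ — is then immediate from the construction.
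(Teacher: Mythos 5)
Your proposal follows essentially the same three-step strategy as the paper: (1) a Beilinson--Ginzburg--Soergel Koszul duality giving the underlying triangulated equivalence, enlarging to auxiliary weighted categories so the inverse is manifest at the chain level and then restricting to the bounded/perfect subcategories; (2) upgrading $\CF$ to a tensor functor and proving (via Lemma \ref{Lem:SESmodule}) that it intertwines the canonical $\tgv$-action with the tangent-Lie-algebra action; (3) invoking Proposition \ref{Prop:TM} to match the Casimir with the Poisson bivector and Drinfeld's uniqueness of the KZ solution to conclude the braided equivalence. This is correct and matches the paper's argument in structure and in the identification of the key obstacle.
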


The proof of this statement will be divided into the following steps. 

\begin{enumerate}

\item We first construct an explicit functor $\CF$ between two sides, starting with a larger category which makes the equivalence manifest on the chain level. The proof follows almost word-to-word to the Koszul Duality proof in \cite{beilinson1996koszul}. 

\item We then upgrade the functor $\CF$ to an equivalence of symmetric monoidal categories, in a way that the action of the Lie algebra $\tgv$ on the LHS is identified with the action of tangent Lie algebra on the RHS.

\item Since by Proposition \ref{Prop:TM}, the quadratic Casimir on the LHS is identified with the Poisson bi-vector on the RHS, we can use Drinfeld's universal solution to KZ equations to show that the functor $\CF$ is an equivalence of braided tensor categories. 

\end{enumerate}

\subsubsection{Step 1: constructing the functor}

Let $\tgv^+$ be the Lie sub-algebra generated by elements of positive degrees. As we have seen, there is a quasi-isomorphism:
\be
\mathrm{CE}^*(\tgv^+)\cong \C[\mu^{-1}(0)],
\ee
from which we derived that $\tgv^+$ is the tangent Lie algebra of $\mu^{-1}(0)$. Consider the following graded object:
\be
\CB:= U(\tgv^+)\otimes \mathrm{CE}_*(\tgv^+) , \qquad d=\sum_i x_i \otimes \psi_+^i+y_i\otimes \psi_-^i+c_a\otimes t^a+d_{CE},
\ee
Then $\CB$ induces a pair of adjoint functors:
\be
\CF: U(\tgv^\hbar)\Mod^{gr}_G \rightleftharpoons \QCoh_{\C^\times}([\CM]\lbb\hbar\rbb) :\CG
\ee
where $\CF(V)=\Hom_{\tgv^+} (\CB, V)$, and $\CG(M)=\CB\otimes_{\mathrm{CE}^*(\tgv^+)} M$. We need, however, to correctly account for the grading, and this is can be done similarly to \cite{beilinson1996koszul}. Since $U(\tgv^+)$ is a positively graded ring, we define $C(\tgv^+\lbb\hbar\rbb)_G$ to be the homotopy category of complexes of graded $G$-equivariant modules of $\tgv^+\lbb\hbar\rbb$ that are flat over $\C\lbb\hbar\rbb$, where $\hbar$ is in equviariant degree $-2$ and homological degree $0$. An object in this category is given by a complex:
\be
\btik
\cdots M^i\rar{d_M} & M^{i+1}\rar{d_M}& M^{i+2}\cdots
\etik
\ee
such that each $M^i=\oplus_j M^i_j$ and $d_M$ is grading-preserving, and that each $M^i$ is a flat module over $\C\lbb\hbar\rbb$ (namely isomorphic to $V\lbb\hbar\rbb$ for some graded vector space $V$). We define $C(\tgv^+\lbb\hbar\rbb)_G^+$ be the full subcategory where the $M$ satisfies:
\be
M^i_j=0 \text{ if } i\ll 0 \text{ or } i+j\gg 0
\ee
On the other hand, denote by $C(\mathrm{CE}^*(\tgv^+)\lbb\hbar\rbb)_G^-$ the full subcategory of the homotopy category of the DG category of graded $G$-equivariant modules of $\mathrm{CE}^*(\tgv^+)\lbb\hbar\rbb$, flat over $\C\lbb\hbar\rbb$ such that:
\be
M^i_j=0 \text{ if } i\gg 0 \text{ or } i+j\ll 0
\ee
We give $\hbar$ equivariant degree $2$, but now also the cohomology degree $-2$. Denote by $D(\tgv^+\lbb\hbar\rbb)_G^+$ the derived category of $C(\tgv^+\lbb\hbar\rbb)_G^+$ (namely the localization over quasi-isomorphisms), and similarly $D(\mathrm{CE}^*(\tgv^+)\lbb\hbar\rbb)_G^-$ the derived category of $C(\mathrm{CE}^*(\tgv^+)\lbb\hbar\rbb)_G^-$. 

\begin{Thm}
There exists a pair of adjoint functors: 
\be
\CF: D(\tgv^+\lbb\hbar\rbb)_G^+ \rightleftharpoons  D(\mathrm{CE}^*(\tgv^+)\lbb\hbar\rbb)_G^- :\CG
\ee
that induce equivalences of triangulated categories. 

\end{Thm}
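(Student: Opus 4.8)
The plan is to recognize this as the graded Koszul duality of \cite[Section 2.12]{beilinson1996koszul} for the Koszul-dual pair consisting of the enveloping algebra $U(\tgv^+)$ and the Chevalley--Eilenberg cochain algebra $\mathrm{CE}^*(\tgv^+)\simeq\C[\mu^{-1}(0)]$, and to transcribe the argument there essentially verbatim. The only features absent from that reference are the formal parameter $\hbar$, handled by working $\C\lbb\hbar\rbb$-linearly throughout and only with modules flat over $\C\lbb\hbar\rbb$, and the $G\times\C^\times$-equivariance, which is preserved by every functor below since $\CB$ carries such a structure and all maps used are natural. First I would pin down $\CB=U(\tgv^+)\otimes\mathrm{CE}_*(\tgv^+)$, with the stated twisted differential, as the two-sided Koszul complex: a DG bimodule over $\bigl(U(\tgv^+)\lbb\hbar\rbb,\ \mathrm{CE}^*(\tgv^+)\lbb\hbar\rbb\bigr)$, termwise graded-free over $U(\tgv^+)$, whose augmentation realizes the Chevalley--Eilenberg resolution of $\C$. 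The functors $\CF(V)=\Hom_{\tgv^+}(\CB,V)$ and $\CG(M)=\CB\otimes_{\mathrm{CE}^*(\tgv^+)}M$ then form an adjoint pair via tensor--hom, $\Hom(\CG M,V)\cong\Hom(M,\CF V)$, already at the chain level.

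The first real point is the bookkeeping of the two gradings. Using that $U(\tgv^+)$ is non-negatively graded with degree-zero piece $\C$ and that $\mathrm{CE}_*(\tgv^+)$ is finite-dimensional in each fixed (homological, internal) bidegree, I would check that $\CF$ sends $C(\tgv^+\lbb\hbar\rbb)_G^+$ into $C(\mathrm{CE}^*(\tgv^+)\lbb\hbar\rbb)_G^-$ and that $\CG$ sends $C(\mathrm{CE}^*(\tgv^+)\lbb\hbar\rbb)_G^-$ into $C(\tgv^+\lbb\hbar\rbb)_G^+$: the $\pm$ conditions are precisely what forces the products implicit in $\Hom_{\tgv^+}(\CB,-)$ and the sums implicit in $\CB\otimes_{\mathrm{CE}^*(\tgv^+)}-$ to be finite in each bidegree, so both functors are well defined on the truncated categories. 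Since $\CB$ is termwise graded-free over $U(\tgv^+)$ and is $\C\lbb\hbar\rbb$-flat, $\CF$ preserves quasi-isomorphisms within the $+$-bounded range, and a dual (semifree) argument shows the same for $\CG$; hence both descend to triangulated, still adjoint, functors between $D(\tgv^+\lbb\hbar\rbb)_G^+$ and $D(\mathrm{CE}^*(\tgv^+)\lbb\hbar\rbb)_G^-$.

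The heart of the matter is that the unit $\mathrm{id}\to\CF\CG$ and counit $\CG\CF\to\mathrm{id}$ are isomorphisms. I would prove this by reduction to the graded-free modules $U(\tgv^+)\lbb\hbar\rbb\otimes W$ on one side and $\mathrm{CE}^*(\tgv^+)\lbb\hbar\rbb\otimes W$ on the other, for $W$ a finite-dimensional $G$-module: on these the composites are computed explicitly and the assertion collapses to the acyclicity of the Koszul complex attached to the pair $(U(\tgv^+),\mathrm{CE}^*(\tgv^+))$, which is classical, together with the parallel statement for the Chevalley--Eilenberg side. To promote this from the generators to all objects one needs the combinatorial input of \cite{beilinson1996koszul}: every object of $D(\tgv^+\lbb\hbar\rbb)_G^+$ has a resolution by a (possibly half-infinite) complex of graded-free modules still lying in the $+$-bounded range, and dually for $D(\mathrm{CE}^*(\tgv^+)\lbb\hbar\rbb)_G^-$, after which a d\'evissage propagates the isomorphism of unit and counit. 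Then $\CF$ and $\CG$ are mutually inverse equivalences, with $\C\lbb\hbar\rbb$-flatness and $G\times\C^\times$-equivariance surviving since they are carried by $\CB$ and all maps in sight are natural.

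\textbf{The main obstacle} I expect is exactly this last step --- the existence, within the $\pm$-bounded derived categories, of graded-free resolutions that do not escape the $\pm$-bounded range. This is what upgrades the plain adjunction $(\CG,\CF)$ into an equivalence, and it is the reason the somewhat unusual categories $D(\tgv^+\lbb\hbar\rbb)_G^+$ and $D(\mathrm{CE}^*(\tgv^+)\lbb\hbar\rbb)_G^-$, rather than ordinary bounded-below or bounded-above derived categories, appear in the statement; the verification is pure grading bookkeeping, carried out exactly as in \cite{beilinson1996koszul}. Once the equivalence is established at this level, restricting $\CF$ to the complexes that are finitely generated over $\C\lbb\hbar\rbb$ recovers the cleaner equivalence $D^b U(\tgv^\hbar)\Mod^{gr}_G\simeq\Perf_{\C^\times}([\CM]\lbb\hbar\rbb)$ of Theorem \ref{Thm:tangentBTC}.
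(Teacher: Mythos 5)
Your proposal follows the same route as the paper: both transcribe the graded Koszul duality of Beilinson--Ginzburg--Soergel, Section 2.12, to the pair $(U(\tgv^+),\mathrm{CE}^*(\tgv^+))$, keep track of the extra internal grading, and observe that the only genuinely new features --- $\C\lbb\hbar\rbb$-linearity and $G\times\C^\times$-equivariance --- are harmless because one works with $\C\lbb\hbar\rbb$-flat modules and all maps are natural in the $G$-equivariant structure carried by $\CB$. The paper's Step 1 (grading bookkeeping showing $\CF,\CG$ land in $C^\mp$ and descend to $D^\mp$), Step 2 (adjointness via tensor--Hom on $\CB$), and Step 3 (unit and counit are quasi-isomorphisms, cited to BGS Lemmas 2.12.3--2.12.4 together with the flatness remark) match your outline point by point.

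The one place where your articulation genuinely departs from the paper is the final step. You propose to prove that the unit and counit are isomorphisms by reducing to graded-free generators $U(\tgv^+)\lbb\hbar\rbb\otimes W$ and $\mathrm{CE}^*(\tgv^+)\lbb\hbar\rbb\otimes W$ and then running a d\'evissage through graded-free resolutions. BGS --- and hence the paper, which cites BGS verbatim --- instead run a direct filtration/spectral-sequence argument on the cone of the adjunction morphism, with the $\pm$-grading constraints supplying convergence. The two points of view are close in spirit, and both ultimately come down to the same grading bookkeeping, but the d\'evissage step as you state it is not automatic: the resolutions in question are half-infinite, so the ``propagation'' from generators to arbitrary objects is itself a convergence statement of exactly the kind BGS's filtration argument is designed to settle. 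In other words, the obstacle you flag (existence of $\pm$-bounded graded-free resolutions) is real, but it is only half of what is needed; the other half, which BGS provides and your sketch elides, is that the resulting limit/colimit of quasi-isomorphisms is again a quasi-isomorphism inside the $\pm$-bounded categories. Since you are in any case intending to follow BGS's proof, this is a matter of wording rather than a gap, but if you were to write the d\'evissage out from scratch you would find yourself re-deriving precisely BGS's filtration argument.
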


\begin{proof}

\noindent\textbf{Step 1.}

Define functor $\CF: C(\tgv^+\lbb\hbar\rbb)_G^+\to C(\mathrm{CE}^*(\tgv^+)\lbb\hbar\rbb)_G^-$ by:
\be
\CF(M)=\mathrm{CE}^*(\tgv^+)\otimes M, \qquad d_{\CF(M)}=d_M+ \sum_i x_i \otimes \psi_+^i+y_i\otimes \psi_-^i+c_a\otimes t^a+d_{CE}.
\ee
We define grading so that:
\be
\CF(M)^p_q=\bigoplus_{\substack{ p=i+j+k\\ q=l-j}} \mathrm{CE}^*(\tgv^+)^k_l\otimes M^i_j
\ee
One can then show, using the fact that $\mathrm{CE}^*(\tgv^+)$ is positively graded and that $\mathrm{CE}^*(\tgv^+)^k_l=0$ for $|k|\ll 0$ to show that $\CF(M)$ indeed lies in $C(\mathrm{CE}^*(\tgv^+)\lbb\hbar\rbb)_G^-$. The same argument of \cite[Theorem 2.12.1]{beilinson1996koszul} implies that this descends to the derived category. Define now $\CG:  C(\mathrm{CE}^*(\tgv^+)\lbb\hbar\rbb)_G^-\to C(\tgv^+\lbb\hbar\rbb)_G^+$ by:
\be
\CG(N)=\oplus_j \Hom_\C( U(\tgv^+)_j, N),\qquad d_{\CG(N)}=d_N+\sum_i x_i \otimes \psi_+^i+y_i\otimes \psi_-^i+c_a\otimes t^a. 
\ee
We define grading by:
\be
\CG (N)^p_q=\bigoplus_{\substack{p=i+j\\ q=l-j}} \Hom_\C(U(\tgv^+)_{-l}, N^i_j). 
\ee
This object clearly lies in $C(\tgv^+\lbb\hbar\rbb)_G^+$, and moreover $\CG$ descends to the derived category. 

\noindent\textbf{Step 2.} 

The functors $\CF$ and $\CG$ are adjoint to each other, thanks to the fact that these are defined by tensor-Hom adjunction of the object $\CB$. More precisely, we have canonical isomorphism of DG vector spaces:
\be
\Hom_{ \mathrm{CE}^*(\tgv^+)\lbb\hbar\rbb}(\CF(M), N)=\Hom_{\C\lbb\hbar\rbb} (M,N)=\Hom_{\tgv^+\lbb\hbar\rbb}(M, \Hom_{\C\lbb\hbar\rbb}(U(\tgv^+)\lbb\hbar\rbb, N)),
\ee
we note that $\Hom_{\C\lbb\hbar\rbb}(U(\tgv^+)\lbb\hbar\rbb, N)\cong \Hom_{\C}(U(\tgv^+), N)$ canonically. Moreover, if $N$ is such that $U(\tgv^+)_lN=0$ for large enough $l$, then:
\be
\Hom_{\tgv^+\lbb\hbar\rbb}(M, \Hom_{\C}(U(\tgv^+), N))=\Hom_{\tgv^+\lbb\hbar\rbb}(M, \bigoplus_l \Hom_{\C}(U(\tgv^+)_l, N)).
\ee
As in the proof of \cite[Lemma 2.12.2]{beilinson1996koszul}, this establishes the adjointness of $\CF$ and $\CG$. Since the above isomorphisms are all isomorphisms of $G$-modules, we see that this adjoint pair are for $G$-equivariant categories. 

\noindent\textbf{Step 3.} 

The canonical natural transform $\CF\circ \CG\to 1$ and $1\to \CG\circ \CF$ are both equivalences. This will follow word to word from the proof of \cite[Lemma 2.12.3, Lemma 2.12.4]{beilinson1996koszul}. Note that the flatness over $\C\lbb\hbar\rbb$ ensures that tensor products over $\C\lbb\hbar\rbb$ of $M$ or $N$ with any object can be computed by ordinary tensor product, and therefore does not present difficulties in repeating the proof of \textit{loc.cit}.

\end{proof}

We now wish to restrict to a full subcategory. Denote by $D^b(\tgv^+\lbb\hbar\rbb)_G$ the full subcategory of $D(\tgv^+\lbb\hbar\rbb)_G^+$ consisting of objects $M^*$ whose cohomology satisfies $H^i(M^*)=0$ for $|i|\gg 0$ and that $H^i(M^*)$ are finitely generated over $\C\lbb\hbar\rbb$. On the other hand, by the definition of the category $\Perf_{\C^\times}([\CM])$, it clearly sits as a full-subcategory of $D(\mathrm{CE}^*(\tgv^+)\lbb\hbar\rbb)_G^-$. We have the following corollary:

\begin{Prop}\label{Prop:equivbounded}
The functor $\CF$ and $\CG$ induces an equivalence of triangulated categories:
\be
\CF: D^b(\tgv^+\lbb\hbar\rbb)_G  \rightleftharpoons \Perf_{\C^\times}([\CM]) :\CG.
\ee

\end{Prop}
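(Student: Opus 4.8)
The plan is to deduce Proposition \ref{Prop:equivbounded} from the equivalence $\CF : D(\tgv^+\lbb\hbar\rbb)_G^+ \rightleftharpoons D(\mathrm{CE}^*(\tgv^+)\lbb\hbar\rbb)_G^- : \CG$ of the preceding theorem by showing that it restricts to the two full subcategories in the statement, following the pattern of \cite[Section 2.12]{beilinson1996koszul}. Since $D^b(\tgv^+\lbb\hbar\rbb)_G$ and $\Perf_{\C^\times}([\CM])$ are full triangulated subcategories of the two ambient categories, and $\CF$, $\CG$ are mutually quasi-inverse there, it suffices to check the two containments
\[
\CF\big(D^b(\tgv^+\lbb\hbar\rbb)_G\big)\subseteq \Perf_{\C^\times}([\CM]), \qquad \CG\big(\Perf_{\C^\times}([\CM])\big)\subseteq D^b(\tgv^+\lbb\hbar\rbb)_G .
\]
The unit and counit of the adjunction then restrict automatically and exhibit the restricted functors as quasi-inverse equivalences, which is the assertion.

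The second containment is essentially immediate. For a finite-dimensional $G$-module $W$, viewed as a $\tgv^+$-module with trivial action, all the terms $x_i\otimes\psi_+^i$, $y_i\otimes\psi_-^i$, $c_a\otimes t^a$ in the differential of $\CF$ act by zero, so $\CF(\C\lbb\hbar\rbb\otimes W) = \mathrm{CE}^*(\tgv^+)\lbb\hbar\rbb\otimes W$ with its Chevalley--Eilenberg differential --- precisely one of the generators of $\Perf_{\C^\times}([\CM])$. Hence $\CG$ sends each generator $\mathrm{CE}^*(\tgv^+)\lbb\hbar\rbb\otimes W\cong\CF(\C\lbb\hbar\rbb\otimes W)$ to $\CG\CF(\C\lbb\hbar\rbb\otimes W)\cong\C\lbb\hbar\rbb\otimes W$, which has bounded cohomology, finitely generated over the Noetherian ring $\C\lbb\hbar\rbb$, and so lies in $D^b(\tgv^+\lbb\hbar\rbb)_G$. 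As $\CG$ is exact and $D^b(\tgv^+\lbb\hbar\rbb)_G$ is closed under shifts, cones and direct summands, $\CG$ carries the whole subcategory generated by the $\mathrm{CE}^*(\tgv^+)\lbb\hbar\rbb\otimes W$ into $D^b(\tgv^+\lbb\hbar\rbb)_G$.

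The first containment I would reduce to the dévissage statement that $D^b(\tgv^+\lbb\hbar\rbb)_G$ is \emph{already} the thick triangulated subcategory generated by the trivial modules $\C\lbb\hbar\rbb\otimes W$. Granting this, $\CF$ sends each such generator to the generator $\mathrm{CE}^*(\tgv^+)\lbb\hbar\rbb\otimes W$ of $\Perf_{\C^\times}([\CM])$, as just computed, and exactness of $\CF$ finishes the argument. To prove the dévissage claim one reduces, via the truncation triangles $\tau_{<n}M\to M\to H^n(M)[-n]$ and induction on cohomological amplitude, to the case of a single finitely generated $\C\lbb\hbar\rbb$-module $N$ carrying compatible algebraic $G$- and $\tgv^+$-actions; because $\tgv^+$ is concentrated in \emph{strictly positive} internal degrees while $N$ is $\hbar$-adically complete and finitely generated, $\tgv^+$ acts topologically nilpotently on $N$, so $N$ is built in the derived category out of modules on which $\tgv^+$ acts trivially, and each of those, being a finitely generated $\C\lbb\hbar\rbb$-module with algebraic $G$-action, admits a finite $G$-equivariant resolution by modules $\C\lbb\hbar\rbb\otimes W$ since $\C\lbb\hbar\rbb$ has global dimension one and $G$ is reductive. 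I expect the main obstacle to be exactly this step: making precise how topological nilpotence of $\tgv^+$ lets one resolve an arbitrary object of $D^b(\tgv^+\lbb\hbar\rbb)_G$ by trivial modules while staying inside the grading constraints defining $C(\tgv^+\lbb\hbar\rbb)_G^+$; everything else is formal and parallels \cite{beilinson1996koszul} verbatim.
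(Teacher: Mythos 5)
Your overall strategy is the same as the paper's: show each of $\CF$ and $\CG$ lands in the claimed full subcategory, observe that $\CG$ sends the generators $\mathrm{CE}^*(\tgv^+)\lbb\hbar\rbb\otimes W$ to trivial modules, and perform a d\'evissage to reduce the $\CF$-direction to trivial modules. The one genuinely imprecise step is the claim that $\tgv^+$ ``acts topologically nilpotently'' on a cohomology module because it is ``$\hbar$-adically complete and finitely generated.'' That is not the right mechanism, and if taken literally it would not suffice. The correct mechanism is the grading constraint built into $C(\tgv^+\lbb\hbar\rbb)_G^+$: an object satisfies $M^i_j=0$ for $i+j\gg 0$, so in each fixed homological degree the equivariant grading is bounded above, while $\tgv^+$ sits in strictly positive equivariant degree and hence strictly \emph{raises} degree. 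This forces the top equivariant-degree piece of a single-homological-degree module to be a $G$-submodule with trivial $\tgv^+$-action, and the paper filters by decreasing equivariant degree to extract a finite filtration with such subquotients. The $\hbar$-adic structure is irrelevant here (indeed $\hbar$ has equivariant degree $-2$, so the grading is unbounded \emph{below}, and nilpotence has nothing to do with $\hbar$-adic completeness); what matters is the bound from above together with positivity of $\tgv^+$. Once the filtration exists, your resolution of a finitely generated $\C\lbb\hbar\rbb$-module by free pieces $\C\lbb\hbar\rbb\otimes W$ (global dimension one plus reductivity of $G$) matches the paper's treatment of the non-flat case, where a torsion subquotient is replaced by the two-term complex $\hbar:\,V\lbb\hbar\rbb\to V\lbb\hbar\rbb$.

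One further ingredient you use implicitly --- that $D^b(\tgv^+\lbb\hbar\rbb)_G$ is closed under cones --- is not automatic from its definition as a full subcategory cut out by a cohomological finiteness condition, and the paper proves it separately (via the equivalence with $D^b U(\tgv^\hbar)\Mod^{gr}_G$). You should either cite that lemma or justify closure under cones directly before invoking it for the $\CG$-direction.
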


We first prove a lemma. We defined the category $ D^b(\tgv^+\lbb\hbar\rbb)_G$ as the full subcategory whose cohomology is finitely generated over $\C\lbb\hbar\rbb$, but we could have started with the homotopy category of finite chain complexes of finite-flat modules and invert quasi-isomorphisms. This category is precisely $D^b U(\tgv^\hbar)\Mod^{gr}_G$. There is of course a natural functor $D^b U(\tgv^\hbar)\Mod^{gr}_G\to D^b(\tgv^+\lbb\hbar\rbb)_G$ that is essentially surjective. We claim that this is an equivalence. 

\begin{Lem}
The natural functor $D^b U(\tgv^\hbar)\Mod^{gr}_G\to D^b(\tgv^+\lbb\hbar\rbb)_G$ is fully-faithful. 

\end{Lem}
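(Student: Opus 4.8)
The plan is to reduce the statement to a comparison of $\mathrm{Ext}$-groups between objects of the heart $\CA:=U(\tgv^\hbar)\Mod^{gr}_G$, viewed as complexes concentrated in cohomological degree $0$, and then to bootstrap by a standard d\'evissage. Write $\Phi$ for the natural functor. I would first establish that for $M,N\in\CA$ the canonical map
\[
\mathrm{Hom}_{D^b U(\tgv^\hbar)\Mod^{gr}_G}(M,N[n])\;\longrightarrow\;\mathrm{Hom}_{D^b(\tgv^+\lbb\hbar\rbb)_G}(M,N[n])
\]
is an isomorphism for every $n$; call this $(\star)$. Granting $(\star)$, full faithfulness follows by induction on cohomological amplitude: every object of $D^b U(\tgv^\hbar)\Mod^{gr}_G$ is a finite complex of objects of $\CA$, the stupid truncation triangle $\sigma^{\ge k}X\to X\to\sigma^{<k}X$ is sent by $\Phi$ to a distinguished triangle, and the five lemma applied to the two long exact sequences of $\mathrm{Hom}(-,N[\bullet])$ propagates $(\star)$ first to arbitrary $X$ in the source with $Y=N\in\CA$, and then, running the same argument in the second variable, to arbitrary $X,Y$ in the source.

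For $(\star)$ itself the point is that both sides are computed by one and the same co-resolution of $N$. Since $G$ is reductive and the $\C^\times$-weights on $\mu^{-1}(0)$ make $\C[\mu^{-1}(0)]=\mathrm{CE}^*(\tgv^+)$ a connected positively graded ring (equivalently, $U(\tgv^+)$ is connected and positively cohomologically graded), I would co-resolve $N$ by co-free $G$-equivariant modules $I^k=\Hom_{\C\lbb\hbar\rbb}\!\big(U(\tgv^+)\lbb\hbar\rbb\otimes_\C W^k,\ \C\lbb\hbar\rbb\big)$ for suitable finite-dimensional graded $G$-modules $W^k$, chosen so that $N\to I^0\to I^1\to\cdots$ is minimal. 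Positivity of the grading forces the internal degrees of the cogenerators to drop as $k$ grows, so the bigraded complex $I^\bullet$ satisfies exactly the two one-sided vanishing conditions ($I^i_j=0$ for $i\ll 0$ and for $i+j\gg 0$) defining $C(\tgv^+\lbb\hbar\rbb)_G^+$; moreover each $I^k$ is flat over $\C\lbb\hbar\rbb$ (a graded dual of a free module over the coherent ring $\C\lbb\hbar\rbb$), so $I^\bullet$ genuinely represents $N$ in $D(\tgv^+\lbb\hbar\rbb)_G^+$. By the forgetful--co-induction adjunction together with exactness of $(-)^G$, the $I^k$ are acyclic both for $\mathrm{Hom}_{D(\tgv^+\lbb\hbar\rbb)_G^+}(M,-)$ and for $\mathrm{Hom}_{\CA}(M,-)$ (the heart being extension-closed, this equals the ambient $\mathrm{Ext}$), so both sides of $(\star)$ are equal to $H^n\Hom_{U(\tgv^+\lbb\hbar\rbb),\,G}(M,I^\bullet)$. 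Flatness over $\C\lbb\hbar\rbb$ throughout makes every $\otimes_{\C\lbb\hbar\rbb}$ and $\Hom_{\C\lbb\hbar\rbb}$ underived, so the argument is the verbatim analogue of \cite[Section~2.12]{beilinson1996koszul}.

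I expect the main obstacle to be precisely the bigrading bookkeeping inside $(\star)$: one must check carefully that the chosen co-free co-resolution of $N$ really lands in the full subcategory $C(\tgv^+\lbb\hbar\rbb)_G^+$, i.e.\ that the two one-sided finiteness conditions are compatible with an infinite minimal co-resolution. This is exactly where positivity of the $\C^\times$-action on $\mu^{-1}(0)$ is essential and where the argument of \cite{beilinson1996koszul} must be reproduced rather than merely quoted; by contrast, the $\C\lbb\hbar\rbb$-linear and $G$-equivariant decorations cause no new phenomena, flatness handling $\hbar$ and reductivity of $G$ killing equivariant higher $\mathrm{Ext}$. Once the lemma is in place, combined with the chain-level equivalence $\CF$ already established, it yields Proposition~\ref{Prop:equivbounded}, identifying $D^b U(\tgv^\hbar)\Mod^{gr}_G$ with $\Perf_{\C^\times}([\CM])$.
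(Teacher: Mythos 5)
Your d\'evissage via stupid truncations and the five lemma is fine, and reducing to $(\star)$ for $M,N$ in degree $0$ is the right move. The gap is in the justification of $(\star)$ itself. The co-free co-resolution $I^\bullet$, with $I^k=\Hom_{\C\lbb\hbar\rbb}(U(\tgv^+)\lbb\hbar\rbb\otimes W^k,\C\lbb\hbar\rbb)$, lives in $D(\tgv^+\lbb\hbar\rbb)_G^+$ but emphatically \emph{not} in $D^b U(\tgv^\hbar)\Mod^{gr}_G$: each $I^k$ fails to be finitely generated over $\C\lbb\hbar\rbb$, and the complex is unbounded. Using $I^\bullet$ to compute $\Hom_{D(\tgv^+\lbb\hbar\rbb)_G^+}(M,N[n])$ is unproblematic. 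But the equality with $\Hom_{D^b U(\tgv^\hbar)\Mod^{gr}_G}(M,N[n])$, which you dispatch with the parenthetical remark ``the heart being extension-closed, this equals the ambient $\mathrm{Ext}$,'' is precisely the content of the lemma and not a formal consequence. (Note also that $U(\tgv^\hbar)\Mod^{gr}_G$ is only an exact category, not an abelian one: kernels of maps of finite-flat $\C\lbb\hbar\rbb$-modules need not be flat.) Extension closure of an exact subcategory in an ambient category identifies $\mathrm{Ext}^1$, but for higher $\mathrm{Ext}$ one needs a genuine (co)resolving-type condition or an explicit cofinality argument; without it you have not shown that morphisms in the small $D^b$ are computed by the large co-resolution.

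The paper's proof addresses exactly this point and is where the real work happens. After writing down the bar resolution $P^*=U(\tgv^+)\otimes\mathrm{CE}_*(\tgv^+)\otimes V$, it introduces the pro-system $\{\tau_{\leq M}\,P^*/I^N\}_{M,N}$ of \emph{finite complexes of finite-flat modules} (so these do lie in $U(\tgv^\hbar)\Mod^{gr}_G$), and shows this pro-system is final among resolutions of $V$ by such complexes. Both $\Hom$-groups are then identified with the same filtered colimit $\varinjlim_{M',N'}\Hom^0(\tau_{\leq M'}P^*/I^{N'},W^*)$, which is what makes the comparison go through. To repair your argument along the same lines you would need a parallel step on the injective side: exhibit an ind-system of finite sub-complexes of $I^\bullet$ lying in $U(\tgv^\hbar)\Mod^{gr}_G$, prove it is cofinal among co-resolutions of $N$ there, and show both $\Hom$-groups are the resulting filtered limit. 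As written, $(\star)$ is asserted rather than proven, and that assertion is the lemma.
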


\begin{proof}
Let $U(\tgv^+)\lbb\hbar\rbb$ be the universal enveloping algebra, which is a projective object in $C(\tgv^+\lbb\hbar\rbb)_G$. It is a standard fact that an object in $C(\tgv^+\lbb\hbar\rbb)_G$ that is quasi-isomorphic to a finite-flat module has a resolution by $U(\tgv^+)\lbb\hbar\rbb$. Indeed, let $V$ be a finite flat object in $C(\tgv^+\lbb\hbar\rbb)_G$, which for simplicity we assume to be in homological degree $0$. Then the morphism $U(\tgv^+)\otimes \mathrm{CE}_*(\tgv^+)\otimes V\to V$ is a $G$-equivariant projective resolution of $V$. Such a resolution computes homomorphisms in $D(\tgv^+)_G$. We simply need to show that these computes homomorphisms in $D^b U(\tgv^\hbar)\Mod^{gr}_G$ as well. 

Since $P^*:=U(\tgv^+)\otimes \mathrm{CE}_*(\tgv^+)\otimes V$ is a projective resolution of $V$ (and therefore final in the category of resolutions of $V$), for any other resolution $M^*\to V$, there must be a map $P^*\to M^*$. Let $I$ be the ideal of $U(\tgv^+)$ generated by $\{t^a\}$. If we assume that $M^*$ is also a finite complex of finite flat modules, then there must exist $N\gg 0$ such that the map $P^*\to M^*$ factors through $P^*/I^N$. Let $\tau_{\leq M}P^*$ be the cut-off of $P^*$:
\be
\tau_{\leq M}P^*:= \mathrm{Ker}(P^{-M})\to P^{-M}\to P^{-M+1}\to \cdots \to P^0
\ee
then the map $P^*\to M^*$ also factors through $\tau_{\leq M}P^*$ for some large $M$. Therefore, the projective system:
\be
\varprojlim\limits_{M,N}\tau_{\leq M} P^*/I^N
\ee
is final in the category of finite chain complexes of finite-flat modules, namely in finite chain complexes of the abelian category $U(\tgv^\hbar)\Mod^{gr}_G$. Now the lemma follows from the following two equalities, where $W^*$ is a finite chain complex of finite-flat modules. 
\be
\begin{aligned}
&\Hom^0_{C(\tgv^+\lbb\hbar\rbb)_G}(P^*, W^*)=\varinjlim\limits_{M', N'}\Hom^0( \tau_{\leq M'} P^*/I^{N'}, W^*), \\
&\Hom^0_{D^b U(\tgv^\hbar)\Mod^{gr}_G}(V^*, W^*)=\varinjlim\limits_{M', N'}\Hom^0( \tau_{\leq M'} P^*/I^{N'}, W^*)
\end{aligned}
\ee

\end{proof}

\begin{Cor}\label{Cor:closedcones}
The category $D^b(\tgv^+\lbb\hbar\rbb)_G$ is closed under taking cones. 

\end{Cor}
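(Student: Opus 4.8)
The plan is to read this off from the preceding lemma. That lemma, together with the essential surjectivity noted just before it, shows that the natural functor
\[
\iota\colon D^b U(\tgv^\hbar)\Mod^{gr}_G \longrightarrow D(\tgv^+\lbb\hbar\rbb)_G^+
\]
is fully faithful with essential image exactly $D^b(\tgv^+\lbb\hbar\rbb)_G$. The functor $\iota$ does not alter the underlying chain complexes, so it carries the mapping cone of a chain map to the mapping cone of the same chain map; hence it is exact, i.e. sends distinguished triangles to distinguished triangles. Since the source is the bounded derived category of an abelian category it is closed under cones, and the image of an exact fully faithful functor from such a category is automatically a triangulated subcategory: given $f\colon M\to N$ with $M=\iota M'$, $N=\iota N'$, I would lift $f$ to $\wt f\colon M'\to N'$ by full faithfulness, complete $\wt f$ to a triangle $M'\to N'\to C'\to M'[1]$ in $D^b U(\tgv^\hbar)\Mod^{gr}_G$, and apply $\iota$; by uniqueness of cones the object $\iota C'$ is then isomorphic to the cone of $f$ formed in $D(\tgv^+\lbb\hbar\rbb)_G^+$, and it lies in $D^b(\tgv^+\lbb\hbar\rbb)_G$ by construction.

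Alternatively, and more self-containedly, one can argue directly with the long exact cohomology sequence. For $f\colon M\to N$ in $D^b(\tgv^+\lbb\hbar\rbb)_G$, complete to a distinguished triangle $M\to N\to C\to M[1]$ inside $D(\tgv^+\lbb\hbar\rbb)_G^+$ — legitimate because the grading and boundedness constraints defining $C(\tgv^+\lbb\hbar\rbb)_G^+$ (namely $M^i_j=0$ for $i\ll 0$ or $i+j\gg 0$) are visibly stable under shifts and under forming mapping cones. The sequence
\[
\cdots\to H^i(M)\to H^i(N)\to H^i(C)\to H^{i+1}(M)\to\cdots
\]
then gives $H^i(C)=0$ for $|i|\gg 0$, and exhibits $H^i(C)$ as an extension of a submodule of $H^{i+1}(M)$ by a quotient of $H^i(N)$; since $\C\lbb\hbar\rbb$ is Noetherian and $H^i(M),H^i(N)$ are finitely generated over it, so is $H^i(C)$. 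Hence $C\in D^b(\tgv^+\lbb\hbar\rbb)_G$.

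There is no real obstacle here; the only point deserving a line of care is verifying that $D(\tgv^+\lbb\hbar\rbb)_G^+$ is genuinely triangulated — that the weight and boundedness conditions survive shifts and cones — so that ``the cone of $f$'' is unambiguous and automatically an object of the ambient category. Granting that, Noetherianity of $\C\lbb\hbar\rbb$ finishes the argument. As a payoff, $D^b(\tgv^+\lbb\hbar\rbb)_G$ is a full triangulated subcategory of $D(\tgv^+\lbb\hbar\rbb)_G^+$, so the equivalence with $\Perf_{\C^\times}([\CM])$ of Proposition \ref{Prop:equivbounded}, and hence with $D^b U(\tgv^\hbar)\Mod^{gr}_G$, is an equivalence of triangulated categories, which is precisely what is needed to transport the braided tensor structure in the next step.
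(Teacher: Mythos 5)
Your second argument is essentially the paper's proof, though the two make different steps explicit. The paper represents the morphism $f$ by an actual chain map $\wt M^*\to N^*[-n]$ (after replacing $M^*$ by a quasi-isomorphic complex) and observes that the standard mapping cone $\wt M^*\oplus N^*[-n-1]$ is an object of the ambient category that visibly lies in $D^b(\tgv^+\lbb\hbar\rbb)_G$; it leaves the cohomological bookkeeping — boundedness and finite generation of the cone's cohomology — as an implicit consequence. You instead form the cone abstractly in the triangulated category $D(\tgv^+\lbb\hbar\rbb)_G^+$ and verify those two conditions directly via the long exact cohomology sequence and Noetherianity of $\C\lbb\hbar\rbb$, which supplies the missing one line of justification. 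You are also right that the one point requiring care — that the weight/boundedness conditions defining $C(\tgv^+\lbb\hbar\rbb)_G^+$ are stable under shifts and mapping cones, so that $D(\tgv^+\lbb\hbar\rbb)_G^+$ is genuinely triangulated and the cone well-defined — is left tacit in the paper.

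Your first argument is a genuinely different route: it reads the corollary off from the preceding lemma (full faithfulness of $\iota$) together with the paper's assertion of essential surjectivity, by noting that the essential image of an exact fully faithful functor out of a triangulated category is a triangulated subcategory. This is shorter and requires no computation, but it leans on the essential surjectivity of $\iota$ onto $D^b(\tgv^+\lbb\hbar\rbb)_G$, which the paper asserts without proof ("of course"); unpacking that would require knowing that an object with bounded, finitely generated cohomology is quasi-isomorphic to a bounded complex of finitely generated $\C\lbb\hbar\rbb$-flat modules, which is not trivial when the cohomology itself can have $\hbar$-torsion. The direct LES argument avoids that dependency and is therefore the more robust of your two, and is the one the paper itself has in mind.
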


\begin{proof}

Let $f: M^*\to N^*[-n]$ be a morphism in $D^b(\tgv^+\lbb\hbar\rbb)_G$, then there exists a chain complex $\wt{M}^*$ quasi-isomorphic to $M^*$ such that $f: \wt{M}^*\to N^*[-n]$ is a morphism of chain complexes. The cone of $f$ is therefore represented by an object in $D^b(\tgv^+\lbb\hbar\rbb)_G$, namely $\wt{M}^*\oplus N^*[-n-1]$. 

\end{proof}

\begin{proof}[Proof of Proposition \ref{Prop:equivbounded}]

We only need to show that the functors map objects into the desired full subcategory. It is clear for $\CG$ by definition, as $\CG$ maps $\mathrm{CE}^*(\tgv^+)\lbb\hbar\rbb\otimes V$ to $V\lbb\hbar\rbb$ with trivial action of $\tgv^+$, and therefore belongs to $D^b(\tgv^+\lbb\hbar\rbb)_G$. Since by Corollary \ref{Cor:closedcones}, $D^b(\tgv^+\lbb\hbar\rbb)_G$ is closed under taking cones, $\CG$ maps any object in $ \Perf_{\C^\times}([\CM])$ to $D^b(\tgv^+\lbb\hbar\rbb)_G $. 

To show the statement for $\CF$, we may assume that $M$ is in a single homological degree and is finite over $\C\lbb\hbar\rbb$. Since the equivariant grading on $M$ is bounded from above, let $M_0$ be the top degree part of $M$, then by grading constraints, it is a $G$-equivariant submodule of $M$ on which $\tgv^+$ acts trivially. Therefore the module $M_0\lbb\hbar\rbb$ maps to a sub-object of $M$. Let $N:=M/M_0\lbb\hbar\rbb$, then it is finite over $\C\lbb\hbar\rbb$ generated by less generators than $M$. Inductively, we find that $M$ comes equipped with a finite filtration $M_i$ such that $M_i/M_{i+1}$ is finite over $\C\lbb\hbar\rbb$ and has trivial action of $\tgv^+$. We only need to show that such a module is mapped by $\CF$ to $ \Perf_{\C^\times}([\CM])$. 

We first assume $M_i/M_{i+1}$ is flat. Then in this case it is equal to $V\lbb\hbar\rbb$ for some finite-dimensional $G$-module $V$, and clearly the image of this under $\CF$ is in $ \Perf_{\C^\times}([\CM])$. Now suppose it is not flat, then WLOG, we may assume that it is concentrated in a single equivariant degree (by taking the associated graded of a finer filtration). Then it is clear that it is quasi-isomorphic to the complex $M_i/M_{i+1}\lbb\hbar\rbb\stackrel{\hbar}{\to}M_i/M_{i+1}\lbb\hbar\rbb$, and now the statement follows from the corresponding statement for flat modules. 

\end{proof}

Combining all the steps, we have obtained an equivalence of triangulated categories:
\be
\CF: D^b U(\tgv^\hbar)\Mod^{gr}_G\simeq \Perf_{\C^\times}([\CM]\lbb\hbar\rbb). 
\ee
Note that the functor $\CF$ restricted to $ D^b U(\tgv^\hbar)\Mod^{gr}_G$ can be defined without mentioning the categories $D^+$ and $D^-$, and is really a functor on the chain level (namely between DG categories of finite chain complexes). The use of $D^\pm$ manifests that $\CF$ as an equivalence by allowing an adjoint $\CG$, whose chain-level definition involves unbounded categories.

\subsubsection{Step 2: upgrading to a tensor functor}

In this section, we show that the functor $\CF$ upgrades to a tensor functor. Let us first of all treat both $D^b U(\tgv^\hbar)\Mod^{gr}_G$ and $\Perf_{\C^\times}([\CM]\lbb\hbar\rbb)$ as symmetric monoidal categories. We will denote the two tensor products by $\otimes_{U}$ and $\otimes_{\CM}$ respectively. 

\begin{Lem}
There is a canonical isomorphism:
\be
\CF	(M\otimes_{U} N)\cong \CF(M)\otimes_{\CM}\CF(N).
\ee
This makes $\CF$ into a tensor functor between symmetric monoidal categories. 

\end{Lem}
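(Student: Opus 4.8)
The plan is to make the comparison isomorphism completely explicit at the level of chain complexes, where it turns out to be the identity map; the only real input is that the symmetric monoidal structure $\otimes_U$ is built from the \emph{primitive} (hence cocommutative and strictly coassociative) coproduct $\Delta_0$ on $U(\tgv^+)$. Write $\Xi := \sum_i x_i\otimes\psi_+^i + y_i\otimes\psi_-^i + c_a\otimes t^a$ for the universal twisting term appearing in the definition of $\CF$, and for a $\tgv^+$-module $P$ let $\Xi_P$ denote the operator on $\mathrm{CE}^*(\tgv^+)\lbb\hbar\rbb\otimes P$ obtained by letting the $\tgv^+$-components of $\Xi$ act on $P$ through the module structure while the $\mathrm{CE}^*(\tgv^+)$-components multiply on the first factor. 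Then $\Xi_P$ is a degree-one $\mathrm{CE}^*(\tgv^+)\lbb\hbar\rbb$-linear endomorphism, so $\CF(P) = (\mathrm{CE}^*(\tgv^+)\lbb\hbar\rbb\otimes P,\ d_{CE} + 1\otimes d_P + \Xi_P)$ is the \emph{free} dg module over $\mathrm{CE}^*(\tgv^+)\lbb\hbar\rbb$ on $P$ equipped with a Maurer--Cartan perturbation of $d_{CE}$.

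First I would compute $\CF(M)\otimes_\CM\CF(N)$. Since $\CF(M)$ and $\CF(N)$ are free, hence flat, over $\mathrm{CE}^*(\tgv^+)\lbb\hbar\rbb$, the derived relative tensor product $\otimes_\CM$ is computed by the strict one, which collapses the two copies of $\mathrm{CE}^*(\tgv^+)\lbb\hbar\rbb$ into a single copy: the result is the free module $\mathrm{CE}^*(\tgv^+)\lbb\hbar\rbb\otimes M\otimes N$ with differential $d_{CE} + 1\otimes d_M\otimes 1 + 1\otimes 1\otimes d_N + \Xi_M + \Xi_N$ (with the usual Koszul signs), where $\Xi_M$ acts on the $M$-slot and $\Xi_N$ on the $N$-slot, both multiplying on the one surviving copy of $\mathrm{CE}^*(\tgv^+)\lbb\hbar\rbb$. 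On the other side, $M\otimes_U N = M\otimes_{\C\lbb\hbar\rbb} N$ with $\tgv^+$ acting through $\Delta_0(x) = x\otimes 1 + 1\otimes x$; hence $\Xi$ acts on $M\otimes_U N$ exactly as $\Xi_M + \Xi_N$, so $\CF(M\otimes_U N) = (\mathrm{CE}^*(\tgv^+)\lbb\hbar\rbb\otimes M\otimes N,\ d_{CE} + 1\otimes d_M\otimes 1 + 1\otimes 1\otimes d_N + \Xi_M + \Xi_N)$ is the very same complex. The identity map is the required natural isomorphism $\CF(M\otimes_U N)\cong\CF(M)\otimes_\CM\CF(N)$.

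Next I would check the coherence axioms making $\CF$ a symmetric monoidal functor. For associativity, iterating the isomorphism in either order identifies $\CF((M\otimes_U N)\otimes_U P)$ and $\CF(M\otimes_U(N\otimes_U P))$ with the same complex $\mathrm{CE}^*(\tgv^+)\lbb\hbar\rbb\otimes M\otimes N\otimes P$, and the pentagon commutes strictly because $\Delta_0$ is strictly coassociative and $\otimes_{\mathrm{CE}^*(\tgv^+)\lbb\hbar\rbb}$ is strictly associative. For the unit, $\CF(\C\lbb\hbar\rbb) = \mathrm{CE}^*(\tgv^+)\lbb\hbar\rbb$, which is the monoidal unit on the geometric side, and the unit constraints match. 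For the symmetry, the braiding on $D^b U(\tgv^\hbar)\Mod^{gr}_G$ is the Koszul sign flip $m\otimes n\mapsto(-1)^{|m||n|}n\otimes m$ (a module map because $\Delta_0$ is cocommutative), the braiding on $\Perf_{\C^\times}([\CM]\lbb\hbar\rbb)$ is the Koszul flip of $\mathrm{CE}^*(\tgv^+)\lbb\hbar\rbb$-modules, and under the identity identification above these coincide. Finally, all of this descends to the triangulated categories because $\CF$ preserves quasi-isomorphisms, as established in the previous subsection, and both tensor products are exact on the relevant subcategories, being computed by the strict tensor product on complexes flat over $\C\lbb\hbar\rbb$, resp. free over $\mathrm{CE}^*(\tgv^+)\lbb\hbar\rbb$.

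I do not expect a genuine obstacle: the entire content is the identity $\Xi_{M\otimes_U N} = \Xi_M + \Xi_N$, which is precisely the primitivity of $\Delta_0$, together with the fact that $\otimes_\CM$ of two free dg modules carries a single copy of $d_{CE}$. The only points requiring care are the Koszul sign bookkeeping --- checking that the signs produced when $\Xi_M$ is commuted past the $N$-factor agree with those in the symmetry isomorphism and in the graded Leibniz rule for $d_M\otimes 1 + 1\otimes d_N$ --- and the routine but necessary remark that on the image of $\CF$ the derived tensor product $\otimes_\CM$ may be replaced by the strict model, which holds because the objects in question are free over $\mathrm{CE}^*(\tgv^+)\lbb\hbar\rbb$.
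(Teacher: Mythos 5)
Your proof is correct and takes essentially the same route as the paper: write the isomorphism $\mathrm{CE}^*(\tgv^+)\otimes(M\otimes N)\cong(\mathrm{CE}^*(\tgv^+)\otimes M)\otimes_{\mathrm{CE}^*(\tgv^+)}(\mathrm{CE}^*(\tgv^+)\otimes N)$ and observe that the twisted differentials match because $\Delta_0$ is primitive. You simply spell out in full the coherence checks (associativity via strict coassociativity of $\Delta_0$, the sign bookkeeping for the symmetry, and the identification of derived with strict $\otimes_\CM$ on semi-free modules) that the paper dismisses as ``tedious but rudimentary.''
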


\begin{proof}

The canonical isomorphism is given by:
\be
\CF(M\otimes_U N)=\mathrm{CE}^*(\tgv^+)\otimes (M\otimes_{\C\lbb\hbar\rbb} N)\cong \lp \mathrm{CE}^*(\tgv^+)\otimes M\rp \otimes_{ \mathrm{CE}^*(\tgv^+)\lbb\hbar\rbb} \lp \mathrm{CE}^*(\tgv^+)\otimes N\rp,
\ee
Note that the differential on the LHS beautifully coincides with differential on the right. It is tedious but rudimentary to show that this isomorphism respects the symmetric monoidal structure. 

\end{proof}

The Lie algebra $\tgv\lbb\hbar\rbb$ can be viewed as an object in $D^b U(\tgv^\hbar)\Mod^{gr}_G$, and as such it has the structure of a Lie algebra object (thanks to Jacobi identity). The proof of  Proposition \ref{Prop:TM} shows that $\CF(\tgv\lbb\hbar\rbb)$ is identified with the tangent Lie algebra of $[\CM]$, as a Lie algebra object in $\Perf_{\C^\times}([\CM]\lbb\hbar\rbb)$. Given an object $M\in D^b U(\tgv^\hbar)\Mod^{gr}_G$, it canonically has the structure of a $\tgv\lbb\hbar\rbb$ module in the symmetric monoidal category $D^b U(\tgv^\hbar)\Mod^{gr}_G$. We have also seen that $\CF (M)$ is canonically a module of $\CF(\tgv\lbb\hbar\rbb)=\CT_{[\CM]}[-1]\lbb\hbar\rbb$. We claim:

\begin{Prop}\label{Prop:CFaction}
The functor $\CF$ canonically gives rise to a commutative diagram:
\be
\btik
D^b U(\tgv^\hbar)\Mod^{gr}_G \rar{\CF}\dar & \Perf_{\C^\times}([\CM]\lbb\hbar\rbb)\dar\\
\tgv\lbb\hbar\rbb\Mod \lp D^b U(\tgv^\hbar)\Mod^{gr}_G\rp\rar{\CF} &\CT_{[\CM]}[-1]\lbb\hbar\rbb\Mod\lp  \Perf_{\C^\times}([\CM]\lbb\hbar\rbb)\rp. 
\etik
\ee

\end{Prop}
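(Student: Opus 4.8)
The plan is to work entirely in the explicit DG models of the two categories. Write $A := \mathrm{CE}^*(\tgv^+)$ (a model for $\C[\mu^{-1}(0)]$ by Lemma \ref{Lem:Cmu=CE}), $X := \mu^{-1}(0)$ and $\CX := [\CM]$. First I pin down the two module structures being compared. For $M\in D^b U(\tgv^\hbar)\Mod^{gr}_G$ the canonical $\tgv\lbb\hbar\rbb$-module structure in the symmetric monoidal category is just the restriction of the $U(\tgv^\hbar)$-action, i.e.\ the morphism $a_M\colon \tgv\lbb\hbar\rbb\otimes_U M\to M$, $x\otimes m\mapsto x\cdot m$; the graded Jacobi identity is exactly what makes $a_M$ a morphism of $\tgv^\hbar$-modules. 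Since the preceding lemma makes $\CF$ monoidal and Proposition \ref{Prop:TM} identifies $\CF(\tgv\lbb\hbar\rbb)$ with $l_\CM = A\otimes\tgv = T_{[\CM]}[-1]\lbb\hbar\rbb$ as a Lie algebra object, $\CF$ automatically sends $\tgv\lbb\hbar\rbb$-modules to $T_{[\CM]}[-1]\lbb\hbar\rbb$-modules --- this is the lower horizontal arrow --- and carries $a_M$ to a morphism $l_\CM\otimes_\CM\CF(M)\to\CF(M)$. Unwinding the monoidality isomorphism of the preceding lemma together with $\CF(M) = A\otimes M$, this morphism is simply $\mathrm{id}_A\otimes a_M\colon A\otimes\tgv\otimes M\to A\otimes M$. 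As this formula is manifestly natural in $M$, the whole proposition comes down to a single assertion: $\mathrm{id}_A\otimes a_M$ is the canonical action of $T_\CX[-1]$ on $\CF(M)$ (equivalently, the action by the Atiyah class).

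To prove that assertion I would apply Lemma \ref{Lem:SESmodule} with $f = \mu$: the canonical $T_\CX[-1]$-action on an object $N\in\Coh(\CX)$ is determined by (1) the underlying action of $T_X[-1]$ on $N$, regarded as an $A$-module, and (2) the $\wh G$-action on $\pi_*\pi^* N$. Data (2) is immediate here: the $\fg$-summand of $a_M$ is, by the structure $\tgv = \fg\ltimes\tgv^+$, the derivative of the $G$-action on $M$, and $\CF$ is defined $G$-equivariantly, so the $\wh G$-action that $\mathrm{id}_A\otimes a_M$ induces on $\pi_*\pi^*\CF(M)=\mathrm{CE}^*_X(T_X[-1])\otimes_A\CF(M)$ is precisely the one coming from the canonical $G$-equivariant structure of $\CF(M)$, as required. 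Everything therefore hinges on (1).

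For (1), by Lemma \ref{Lem:affineTangent} applied to $\mathfrak l = \tgv^+$ we have $T_X[-1]\cong A\otimes\tgv^+$, and what must be checked is that the canonical action of $T_X[-1]$ on the Koszul-dual module $\CF(M) = A\otimes M$ is the naive one, in which $\tgv^+\subset T_X[-1]$ acts through $a_M^+\colon\tgv^+\otimes M\to M$ and $A$ acts by multiplication. That this naive structure is present at all is visible from the construction of $\CF$: the twisted differential $d_{\CF(M)} = d_{CE} + (\text{the contraction of the generators of }(\tgv^+)^*[-1]\text{ against }a_M^+)$ is exactly the Chevalley-Eilenberg corepresentation differential of the $\tgv^+$-module $M$, so on the affine level $\CF$ is the standard Koszul-duality equivalence and $\mathrm{id}_A\otimes a_M^+$ is a genuine $T_X[-1]$-module structure on $\CF(M)$. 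The identification of this naive structure with the canonical one produced by the abstract machinery of \cite{gaitsgory2017study} --- through $\Omega_X$, $\CL_X$, and the adjunction counit $p_*p^! N\to N$ --- is the affine Koszul-duality comparison, and this is the step I expect to be the real obstacle; it is exactly the comparison already carried out in the smooth and complete-intersection cases (compare \cite{baranovsky2005bgg, baranovsky2007bgg} and \cite[Corollary 6.4.33]{nuiten2018lie}), and the remaining work is to run it in the present $\C^\times\times G$-equivariant, $\hbar$-adic setting while keeping the signs of the graded commutator consistent, which is routine given the explicit DG models above. Granting (1) and (2), Lemma \ref{Lem:SESmodule} identifies $\mathrm{id}_A\otimes a_M$ with the canonical tangent-Lie-algebra action, and the naturality noted in the first paragraph then gives the commuting square.
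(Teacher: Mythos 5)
Your proposal follows essentially the same route as the paper: reduce the commutativity $\CF(a_M)=\alpha_{\CF(M)}$ via Lemma \ref{Lem:SESmodule} to (i) the restriction of the action to $T_X[-1]=\CF(\tgv^+\lbb\hbar\rbb)$ over the affine $X=\mu^{-1}(0)$ and (ii) the $\wh G$-action on the $\tgv^+$-invariants, observe that (ii) is automatic from the $G$-equivariance of $\CF$, and reduce (i) to an affine Koszul-duality comparison. On the step you flag as the real obstacle, the paper first pulls back along $\pi\colon \mu^{-1}(0)\to[\CM]$ (using that $\pi^*$ is conservative) and then dispatches the affine comparison by invoking the \emph{definition} of the canonical action morphism in Theorem 2.3.1 of \cite{hennion2018tangent}, which directly produces the Chevalley--Eilenberg twisted tensor product $\mathrm{CE}^*(\tgv^+)\otimes M$ with its evident $A\otimes\tgv^+$-module structure --- this is a sharper reference than \cite{baranovsky2005bgg,baranovsky2007bgg} or \cite[Corollary 6.4.33]{nuiten2018lie} and closes the gap you acknowledge without further work.
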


\begin{proof}

Let $a_M: \tgv\lbb\hbar\rbb\otimes_{U} M\to M$ be the action morphism for $M\in D^b U(\tgv^\hbar)\Mod^{gr}_G$, and let $\alpha_N: \CF(\tgv\lbb\hbar\rbb)\otimes_{[\CM]} N\to N$ be the action morphism for $N\in  \Perf_{\C^\times}([\CM]\lbb\hbar\rbb)$. The above commutativity amounts to $\CF(a_M)=\alpha_{\CF(M)}$. We recall that there is a short exact sequence of Lie algebras:
\be
\btik
0\rar & \tgv^+\lbb\hbar\rbb\rar & \tgv\lbb\hbar\rbb\rar & \fg\lbb\hbar\rbb\rar & 0
\etik
\ee
which incudes an exact triangle in $\Perf_{\C^\times}([\CM]\lbb\hbar\rbb)$ after applying $\CF$. This short exact sequence is exactly the short exact sequence at the end of Section \ref{subsubsec:tangentfiber}.  As we have stated in Lemma \ref{Lem:SESmodule}, given a module $N$ of $\CF( \tgv\lbb\hbar\rbb)$, we obtain a module of $\CF( \tgv^+\lbb\hbar\rbb)$ by restriction. Moreover, the derived invariants $(N)^{\CF( \tgv^+\lbb\hbar\rbb)}$ is a module of $\CF( \fg\lbb\hbar\rbb)$. The functor of $\CF( \tgv^+\lbb\hbar\rbb)$ derived invariants coincides with pull-push functor along the natural map $p: \widehat{\mu^{-1}(0)\times \mu^{-1}(0)}\to \mu^{-1}(0)$, and the action of $\CF( \fg\lbb\hbar\rbb)$ remaining is simply the derivative of the $G$-equivariant structure along the fibers of $p$. Finally, the action $\alpha_N$ is determined uniquely by its restriction to $\CF( \tgv^+\lbb\hbar\rbb)$, together with the action of $\CF( \fg\lbb\hbar\rbb)$ on $(N)^{\CF( \tgv^+\lbb\hbar\rbb)}$. Therefore we only need to prove the following two points. 

\begin{enumerate}

\item That $\CF(a_M)=\alpha_{\CF(M)}$ after restricting to $\CF(\tgv^+\lbb\hbar\rbb)$. 

\item That on the object $\CF(M)^{\CF(\tgv^+\lbb\hbar\rbb)}\cong \CF(M^{\tgv^+\lbb\hbar\rbb})$, the action of $\CF(\fg\lbb\hbar\rbb)$ coincides. 

\end{enumerate}

To prove point 1, notice that the Lie subalgebra $\CF(\tgv^+\lbb\hbar\rbb)$ is the tangent Lie algebra of $\mu^{-1}(0)$, viewed as a $G$-equivariant object, and the map $\CF(\tgv^+\lbb\hbar\rbb)\to \CF(\tgv\lbb\hbar\rbb)$ is the natural push-forward of tangent Lie algebras along the map $\pi: \mu^{-1}(0)\to [\CM]$. Since pullback is conservative, we only need to show $\pi^*\CF(a_M)=\pi^*\alpha_{\CF(M)}$ restricted to $\pi^*\CF(\tgv^+\lbb\hbar\rbb)$. This equality now follows from the definition of the action morphism of $\CF(\tgv^+\lbb\hbar\rbb)$ on $\CF(M)$ in Theorem 2.3.1 of \cite{hennion2018tangent}, as $\C[\mu^{-1}(0)]\cong \mathrm{CE}^*(\tgv^+)$. 

To prove point 2, consider the natural quotient map $\pi: [\CM]\to [\mathrm{pt}/G]$. The following is clearly a commutative diagram:
\be
\btik
D^b U(\tgv^\hbar)\Mod^{gr}_G \rar{\CF} \dar{(-)^{\tgv^+\lbb\hbar\rbb}}& \Perf_{\C^\times}([\CM]\lbb\hbar\rbb)\dar{\pi_*}\\
D(U(\fg\lbb\hbar\rbb))\Mod^{gr}_G\rar& \QCoh_{\C^\times}\lp [\mathrm{pt}/G]\lbb\hbar\rbb\rp
\etik
\ee
Under the identification $\widehat{\mu^{-1}(0)\times \mu^{-1}(0)}\cong \mathrm{Spec}(\mathrm{CE}^*(\CF(\tgv^+)))$, the map 
\be
p: \widehat{\mu^{-1}(0)\times \mu^{-1}(0)}\to \mu^{-1}(0)
\ee
is identified with the natural map of rings $\C[\mu^{-1}(0)]\to \mathrm{CE}^*(\CF(\tgv^+))$, such that the action of $\mu^{-1}(0)\times \wh{G}$ is identified with the action of $G$ on $\mathrm{CE}^*(\tgv^+)$. Therefore, to finish the proof, we are left to show that the canonical action of $T_{[\mathrm{pt}/G]}[-1]\cong \fg$ on $\pi_*(\CF(M))$ coincides with the action of $\fg$ on $M^{\tgv^+\lbb\hbar\rbb}$. This is now clear by definition since both are defined by the derivative of the action of $G$ on $\pi_*(\CF(M))\cong M^{\tgv^+\lbb\hbar\rbb}$. The proof is now complete. 

 \end{proof}

\subsubsection{Step 3. concluding the equivalence}

What we have shown so far is that we have an equivalence of symmetric monoidal categories:
\be
\CF: D^b U(\tgv^\hbar)\Mod^{gr}_G\longrightarrow \Perf_{\C^\times}([\CM]\lbb\hbar\rbb)
\ee
that intertwines the canonical action of $\tgv^\hbar$ on the LHS with the action of $\CF(\tgv^\hbar)$ on the RHS via the action of tangent Lie algebra. Moreover, according to Proposition \ref{Prop:TM}, the Poisson bi-vector on the RHS is naturally the image of the Quadratic Casimir on the LHS. Since this element in the symmetric tensor, together with its corresponding KZ equation is the only ingredients used to define the braided tensor category structure on both sides, by uniqueness of solutions of KZ equation in the case of nilpotent action of the quadratic Casimir \cite[Theorem A']{drinfeld1991quasi}, we obtain that $\CF$ can be upgraded to an equivalence of braided tensor categories. We have proven Theorem \ref{Thm:tangentBTC}.

\begin{Rem}

It is fairly easy to see from the construction of $\CF$ that it is in fact an equivalence of ribbon categories.

\end{Rem}

Combining Theorem \ref{Thm:tangentBTC} together with Theorem \ref{Thm:KLequiv}, we find the following corollary.

\begin{Cor}
There is an equivalence of triangulated ribbon categories:
\be
D^b\CA^\hbar\Mod^{gr}_G\longrightarrow \Perf_{\C^\times}([\CM]\lbb\hbar\rbb).
\ee

\end{Cor}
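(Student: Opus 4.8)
The plan is to deduce this from Theorem \ref{Thm:tangentBTC} and Theorem \ref{Thm:KLequiv} by a short formal argument. The key observation is that, as established in Section \ref{subsubsec:trivcom}, the Hopf algebra $\CA^\hbar$ and the quasi-Hopf algebra $U(\tgv^\hbar)$ share the \emph{same} underlying algebra $U(\tgv)\lbb\hbar\rbb$, with the same $\fg$-action and the same grading; consequently $\CA^\hbar\Mod^{gr}_G$ and $U(\tgv^\hbar)\Mod^{gr}_G$ are canonically identified as abelian categories, and a fortiori $D^b\CA^\hbar\Mod^{gr}_G = D^b U(\tgv^\hbar)\Mod^{gr}_G$ as triangulated categories. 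What differs between the two sides is only the braided (ribbon) monoidal structure: the one on $\CA^\hbar\Mod^{gr}_G$ is obtained from the one on $U(\tgv^\hbar)\Mod^{gr}_G$ by the Drinfeld twist $J\in U(\tgv^\hbar)^{\otimes 2}$ of Proposition \ref{Prop:Jconj}, in the sense that $\Delta_\hbar=J^{-1}\Delta_0 J$, the $R$-matrix $e^{\hbar r}$ of $\CA^\hbar$ equals $(J^{op})^{-1}e^{\hbar\Omega}J$, the associator $\Phi_{KZ}$ of $U(\tgv^\hbar)$ is trivialized by $J$, and the antipode is conjugated by $Q=\nabla(S\otimes 1)(J)$.

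First I would record that Theorem \ref{Thm:KLequiv}, together with the ribbon remarks surrounding it, upgrades to an equivalence of triangulated ribbon categories
\[
\Psi\colon D^b\CA^\hbar\Mod^{gr}_G \;\xrightarrow{\sim}\; D^b U(\tgv^\hbar)\Mod^{gr}_G .
\]
Since the underlying equivalence of abelian categories is exact (it is the identity on underlying modules), it induces the displayed equivalence of bounded derived categories automatically; the content is in transporting the monoidal structure. As $J$ is a fixed invertible element of $U(\tgv^\hbar)^{\otimes 2}$, the natural isomorphism $M\otimes_{\CA}N\to M\otimes_{U}N$ it defines makes sense termwise on finite chain complexes of finite-flat $\C\lbb\hbar\rbb$-modules and is compatible with differentials; because every object in sight is flat over $\C\lbb\hbar\rbb$, these termwise tensor products already compute the derived tensor products, exactly as in the proof of Theorem \ref{Thm:tangentBTC}, so $J$ furnishes a monoidal structure on $\Psi$. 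The braidings $e^{\hbar r}$ and $e^{\hbar\Omega}$ and the ribbon elements — which on both sides equal $e^{-\hbar C}$ with $C$ the central, $J$-invariant quadratic Casimir — are intertwined by the identities recalled above, so $\Psi$ is a ribbon equivalence.

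It then remains to compose $\Psi$ with the equivalence $\CF\colon D^b U(\tgv^\hbar)\Mod^{gr}_G \xrightarrow{\sim} \Perf_{\C^\times}([\CM]\lbb\hbar\rbb)$ of Theorem \ref{Thm:tangentBTC}, which is a ribbon equivalence by the remark following that theorem; the composite $\CF\circ\Psi$ is the asserted equivalence of triangulated ribbon categories. The only point requiring genuine, if routine, verification is that the twist $J$ assembles into a coherent \emph{monoidal} natural isomorphism after passing to chain complexes — i.e.\ that the pentagon and hexagon coherences transported along $J$ hold on the nose — but, just as in the proof of the ribbon property of $\CA^\hbar$ in Section \ref{subsec:quant2}, this is forced by the coherence identities already satisfied by $(J,\Phi_{KZ},R,Q)$ at the level of the algebras together with flatness over $\C\lbb\hbar\rbb$, and it is the main (and essentially only) bookkeeping obstacle.
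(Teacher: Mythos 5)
Your argument is correct and follows the paper's route exactly: the paper deduces the corollary in one line by composing the equivalence $D^b U(\tgv^\hbar)\Mod^{gr}_G\simeq\Perf_{\C^\times}([\CM]\lbb\hbar\rbb)$ of Theorem \ref{Thm:tangentBTC} with the twist equivalence $D^b\CA^\hbar\Mod^{gr}_G\simeq D^b U(\tgv^\hbar)\Mod^{gr}_G$ coming from Theorem \ref{Thm:KLequiv} (whose underlying functor is the identity on modules since $\CA^\hbar\cong U(\tgv)\lbb\hbar\rbb$ as algebras, so it passes to bounded derived categories immediately). You simply spell out the Drinfeld-twist bookkeeping — the identities $\Delta_\hbar=J^{-1}\Delta_0 J$, $R=(J^{op})^{-1}e^{\hbar\Omega}J$, and the $J$-invariance of the central ribbon element $e^{-\hbar C}$ — that the paper leaves implicit.
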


\begin{Rem}
This in particular answers a question of \cite{roberts2010rozansky} in the case of the stack $[\CM]$, that there exists a quantum group controlling the category of sheaves on $[\CM]$, which is explicitly a deformation of the universal enveloping algebra of the tangent Lie algebra as a Hopf algebra. It is an interesting question whether one can make $\CA^\hbar$ into a sheaf of Hopf algebras over $\CM$, which seem to not have been considered in literature. 

\end{Rem}

\newpage

  \bibliographystyle{alpha}
   
   
   \bibliography{QS}

\newcommand{\etalchar}[1]{$^{#1}$}
\begin{thebibliography}{DRGG{\etalchar{+}}22}

\bibitem[ABM14]{agore2014classifying}
AL~Agore, CG~Bontea, and Gigel Militaru.
\newblock {Classifying bicrossed products of Hopf algebras}.
\newblock {\em Algebras and Representation Theory}, 17(1):227--264, 2014.

\bibitem[Bar05]{baranovsky2005bgg}
Vladimir Baranovsky.
\newblock {BGG correspondence for projective complete intersections}.
\newblock {\em IMRN: International Mathematics Research Notices}, 2005(45),
  2005.

\bibitem[Bar07]{baranovsky2007bgg}
V~Yu Baranovsky.
\newblock {BGG correspondence for toric complete intersections}.
\newblock {\em Moscow Mathematical Journal}, 7(4):581--599, 2007.

\bibitem[BCDN23]{BCDN}
Andrew Ballin, Thomas Creutzig, Tudor Dimofte, and Wenjun Niu.
\newblock {3d mirror symmetry of braided tensor categories}, 2023.

\bibitem[BDGH16]{bullimore2016boundaries}
Mathew Bullimore, Tudor Dimofte, Davide Gaiotto, and Justin Hilburn.
\newblock {Boundaries, mirror symmetry, and symplectic duality in 3d
  $\mathcal{N}= 4$ gauge theory}.
\newblock {\em Journal of High Energy Physics}, 2016(10):1--195, 2016.

\bibitem[BFN18]{braverman2018towards}
Alexander Braverman, Michael Finkelberg, and Hiraku Nakajima.
\newblock {Towards a mathematical definition of {Coulomb} branches of
  3-dimensional $\mathcal{N}= 4$ gauge theories, II}.
\newblock {\em ADV. THEOR. MATH. PHYS}, 22(5):1071--1147, 2018.

\bibitem[BFN{\etalchar{+}}19]{braverman2019coulomb3}
Alexander Braverman, Michael Finkelberg, Andrei Negut, Alexei Oblomkov,
  Alexander Braverman, and Michael Finkelberg.
\newblock {Coulomb branches of 3-dimensional gauge theories and related
  structures}.
\newblock {\em Geometric Representation Theory and Gauge Theory: Cetraro, Italy
  2018}, pages 1--52, 2019.

\bibitem[BGS96]{beilinson1996koszul}
Alexander Beilinson, Victor Ginzburg, and Wolfgang Soergel.
\newblock {Koszul duality patterns in representation theory}.
\newblock {\em Journal of the American Mathematical Society}, 9(2):473--527,
  1996.

\bibitem[BK12]{bellamy2012deformation}
Gwyn Bellamy and Toshiro Kuwabara.
\newblock {On deformation quantizations of hypertoric varieties}.
\newblock {\em Pacific Journal of Mathematics}, 260(1):89--127, 2012.

\bibitem[BLPW16]{braden2016conical2}
Tom Braden, Anthony Licata, Nicholas Proudfoot, and Ben Webster.
\newblock {OF CONICAL SYMPLECTIC RESOLUTIONS Quantizations of conical
  symplectic resolutions II: category O and symplectic duality}.
\newblock {\em Ast{\'e}risque}, 384:75--179, 2016.

\bibitem[BN22]{ballin20223d}
Andrew Ballin and Wenjun Niu.
\newblock {3d mirror symmetry and the $\beta$ $\gamma$ VOA}.
\newblock {\em Communications in Contemporary Mathematics}, page 2250069, 2022.

\bibitem[BPW16]{braden2016conical}
Tom Braden, Nicholas Proudfoot, and Ben Webster.
\newblock {OF CONICAL SYMPLECTIC RESOLUTIONS Quantizations of conical
  symplectic resolutions I: local and global structure}.
\newblock {\em Ast{\'e}risque}, 384:1--73, 2016.

\bibitem[BZN12]{ben2012loop}
David Ben-Zvi and David Nadler.
\newblock {Loop spaces and connections}.
\newblock {\em Journal of Topology}, 5(2):377--430, 2012.

\bibitem[CCG19]{costello2019higgs}
Kevin Costello, Thomas Creutzig, and Davide Gaiotto.
\newblock {Higgs and Coulomb branches from vertex operator algebras}.
\newblock {\em Journal of High Energy Physics}, 2019(3):1--50, 2019.

\bibitem[CDG20]{costello2020boundary}
Kevin Costello, Tudor Dimofte, and Davide Gaiotto.
\newblock {Boundary chiral algebras and holomorphic twists}.
\newblock {\em arXiv preprint arXiv:2005.00083}, 2020.

\bibitem[CDGG21]{creutzig2021qft}
Thomas Creutzig, Tudor Dimofte, Niklas Garner, and Nathan Geer.
\newblock {A QFT for non-semisimple TQFT}.
\newblock {\em arXiv preprint arXiv:2112.01559}, 2021.

\bibitem[CG19]{costello2019vertex}
Kevin Costello and Davide Gaiotto.
\newblock {Vertex Operator Algebras and 3d $\mathcal{N}=4$ gauge theories}.
\newblock {\em Journal of High Energy Physics}, 2019(5):1--39, 2019.

\bibitem[CGPM14]{costantino2014quantum}
Francesco Costantino, Nathan Geer, and Bertrand Patureau-Mirand.
\newblock {Quantum invariants of 3-manifolds via link surgery presentations and
  non-semi-simple categories}.
\newblock {\em Journal of Topology}, 7(4):1005--1053, 2014.

\bibitem[CN24]{creutzig2024kazhdan}
Thomas Creutzig and Wenjun Niu.
\newblock {Kazhdan-Lusztig Correspondence for Vertex Operator Superalgebras
  from Abelian Gauge Theories}.
\newblock {\em arXiv preprint arXiv:2403.02403}, 2024.

\bibitem[DGGH20]{dimofte2020mirror}
Tudor Dimofte, Niklas Garner, Michael Geracie, and Justin Hilburn.
\newblock {Mirror symmetry and line operators}.
\newblock {\em Journal of High Energy Physics}, 2020(2):1--150, 2020.

\bibitem[DN]{cdntoappear}
Tudor Dimofte and Wenjun Niu.
\newblock {Tannakian QFT: From spark algebras to quantum groups}.
\newblock {\em to appear}.

\bibitem[DRGG{\etalchar{+}}22]{de20223}
Marco De~Renzi, Azat~M Gainutdinov, Nathan Geer, Bertrand Patureau-Mirand, and
  Ingo Runkel.
\newblock {3-Dimensional TQFTs from non-semisimple modular categories}.
\newblock {\em Selecta Mathematica}, 28(2):42, 2022.

\bibitem[Dri86]{drinfeld1986quantum}
Vladimir~Gershonovich Drinfeld.
\newblock {Quantum groups}.
\newblock {\em Zapiski Nauchnykh Seminarov POMI}, 155:18--49, 1986.

\bibitem[Dri89]{drinfeld1989quasi}
Vladimir~Guerchonovitch Drinfeld.
\newblock {Quasi-Hopf algebras and Knizhnik-Zamolodchikov equations}.
\newblock In {\em Problems of Modern Quantum Field Theory: Invited Lectures of
  the Spring School held in Alushta USSR, April 24--May 5, 1989}, pages 1--13.
  Springer, 1989.

\bibitem[Dri91]{drinfeld1991quasi}
Vladimir Drinfeld.
\newblock {On quasi-triangular quasi-Hopf algfebras and a group closely
  connected with Gal (Q\^{}\^{}-/Q)}.
\newblock {\em Leningrad Math. J.}, 2:829--860, 1991.

\bibitem[EK96]{etingof1996quantization}
Pavel Etingof and David Kazhdan.
\newblock {Quantization of Lie bialgebras, I}.
\newblock {\em Selecta Mathematica}, 2:1--41, 1996.

\bibitem[GGY24]{garner2024b}
Niklas Garner, Nathan Geer, and Matthew~B Young.
\newblock {B-twisted Gaiotto-Witten theory and topological quantum field
  theory}.
\newblock {\em arXiv preprint arXiv:2401.16192}, 2024.

\bibitem[GH22]{gammage2022betti}
Benjamin Gammage and Justin Hilburn.
\newblock {Betti Tate's thesis and the trace of perverse schobers}.
\newblock {\em arXiv preprint arXiv:2210.06548}, 2022.

\bibitem[GN23]{Garner:2023vb}
Niklas Garner and Wenjun Niu.
\newblock {Line Operators in $U(1|1)$ Chern-Simons Theory}.
\newblock 04 2023.

\bibitem[GR17]{gaitsgory2017study}
Dennis Gaitsgory and Nick Rozenblyum.
\newblock {A Study in Derived Algebraic Geometry: Volume II: Deformations, Lie
  Theory and Formal Geometry}.
\newblock {\em American Mathematical Society}, 2017.

\bibitem[GR19]{gaitsgory2019study}
Dennis Gaitsgory and Nick Rozenblyum.
\newblock {\em {A study in derived algebraic geometry: Volume I:
  correspondences and duality}}, volume 221.
\newblock American Mathematical Society, 2019.

\bibitem[GY22]{geer2022three}
N~Geer and MB~Young.
\newblock {Three dimensional topological quantum field theory from $U_q
  (\mathfrak{gl}(1|1))$ and $U (1| 1)$ Chern--Simons theory}.
\newblock {\em arXiv preprint arXiv:2210.04286}, 2022.

\bibitem[GZB93]{gould1993quantum}
MD~Gould, RB~Zhang, and AJ~Bracken.
\newblock {Quantum double construction for graded Hopf algebras}.
\newblock {\em Bulletin of the Australian Mathematical Society},
  47(3):353--375, 1993.

\bibitem[Hen18]{hennion2018tangent}
Benjamin Hennion.
\newblock {Tangent Lie algebra of derived Artin stacks}.
\newblock {\em Journal f{\"u}r die reine und angewandte Mathematik (Crelles
  Journal)}, 2018(741):1--45, 2018.

\bibitem[HR22]{hilburn2022tate}
Justin Hilburn and Sam Raskin.
\newblock {Tate's thesis in the de rham setting}.
\newblock {\em Journal of the American Mathematical Society}, 2022.

\bibitem[HS02]{hausel2002toric}
Tam{\'a}s Hausel and Bernd Sturmfels.
\newblock {Toric hyperkahler varieties}.
\newblock {\em arXiv preprint math/0203096}, 2002.

\bibitem[IS96]{intriligator1996mirror}
Kenneth Intriligator and Nathan Seiberg.
\newblock {Mirror symmetry in three dimensional gauge theories}.
\newblock {\em Physics Letters B}, 387(3):513--519, 1996.

\bibitem[Kap99]{kapranov1999rozansky}
Mikhail Kapranov.
\newblock {Rozansky--Witten invariants via Atiyah classes}.
\newblock {\em Compositio Mathematica}, 115(1):71--113, 1999.

\bibitem[Kon99]{kontsevich1999rozansky}
Maxim Kontsevich.
\newblock {Rozansky--Witten invariants via formal geometry}.
\newblock {\em Compositio Mathematica}, 115(1):115--127, 1999.

\bibitem[KR10]{kapustin2010three}
Anton Kapustin and Lev Rozansky.
\newblock {Three-dimensional topological field theory and symplectic algebraic
  geometry II}.
\newblock {\em Communications in Number Theory and Physics}, 4(3):463--549,
  2010.

\bibitem[KRS09]{kapustin2009three}
Anton Kapustin, Lev Rozansky, and Natalia Saulina.
\newblock {Three-dimensional topological field theory and symplectic algebraic
  geometry I}.
\newblock {\em Nuclear Physics B}, 816(3):295--355, 2009.

\bibitem[KS09]{Kapustin:2009cd}
Anton Kapustin and Natalia Saulina.
\newblock {Chern-Simons-Rozansky-Witten topological field theory}.
\newblock {\em Nucl. Phys. B}, 823:403--427, 2009.

\bibitem[LM97]{le1997parallel}
Thang~TQ Le and Jun Murakami.
\newblock {Parallel version of the universal Vassiliev-Kontsevich invariant}.
\newblock {\em Journal of Pure and Applied Algebra}, 121(3):271--291, 1997.

\bibitem[Los16]{losev2016deformations}
Ivan Losev.
\newblock {Deformations of symplectic singularities and Orbit method for
  semisimple Lie algebras}.
\newblock {\em arXiv preprint arXiv:1605.00592}, 2016.

\bibitem[N{\etalchar{+}}18]{nuiten2018lie}
Joost Nuiten et~al.
\newblock {\em {Lie algebroids in derived differential topology}}.
\newblock PhD thesis, PhD thesis, Utrecht, 2018.

\bibitem[Nak16]{nakajima2016towards}
Hiraku Nakajima.
\newblock {Towards a mathematical definition of Coulomb branches of
  3-dimensional $\mathcal{N}= 4$ gauge theories, I}.
\newblock {\em ADV. THEOR. MATH. PHYS}, 20(3):595--669, 2016.

\bibitem[OR24]{oblomkov2024categorical}
A~Oblomkov and L~Rozansky.
\newblock {Categorical Chern character and braid groups}.
\newblock {\em Advances in Mathematics}, 437:109436, 2024.

\bibitem[PTVV13]{pantev2013shifted}
Tony Pantev, Bertrand To{\"e}n, Michel Vaqui{\'e}, and Gabriele Vezzosi.
\newblock {Shifted symplectic structures}.
\newblock {\em Publications math{\'e}matiques de l'IH{\'E}S}, 117:271--328,
  2013.

\bibitem[RT91]{reshetikhin1991invariants}
Nicolai Reshetikhin and Vladimir~G Turaev.
\newblock {Invariants of 3-manifolds via link polynomials and quantum groups}.
\newblock {\em Inventiones mathematicae}, 103(1):547--597, 1991.

\bibitem[RW97]{rozansky1997hyper}
Lev Rozansky and Edward Witten.
\newblock {Hyper-K{\"a}hler geometry and invariants of three-manifolds}.
\newblock {\em Selecta Mathematica}, 3:401--458, 1997.

\bibitem[RW10]{roberts2010rozansky}
Justin Roberts and Simon Willerton.
\newblock {On the Rozansky--Witten weight systems}.
\newblock {\em Algebraic \& Geometric Topology}, 10(3):1455--1519, 2010.

\bibitem[Saf16]{safronov2016quasi}
Pavel Safronov.
\newblock {Quasi-Hamiltonian reduction via classical Chern--Simons theory}.
\newblock {\em Advances in Mathematics}, 287:733--773, 2016.

\bibitem[Web19]{webster2019coherent}
Ben Webster.
\newblock {Coherent sheaves and quantum Coulomb branches I: tilting bundles
  from integrable systems}.
\newblock {\em arXiv preprint arXiv:1905.04623}, 2019.

\bibitem[WY23]{webster20233}
Ben Webster and Philsang Yoo.
\newblock {3-dimensional mirror symmetry}.
\newblock {\em arXiv preprint arXiv:2308.06191}, 2023.

\end{thebibliography}

\end{document}